\newcommand{\vol}{\textnormal{vol}}
\def\f12{\frac 1 2}
\def\a{\alpha}
\def\b{\beta}
\def\ga{\gamma}
\def\ep{\epsilon}
\def\si{\sigma}
\def\Si{\Sigma}
\def\om{\omega}
\def\Om{\Omega}
\def\pa{\partial}
\def\les{\lesssim}
\def\f12{\frac 1 2}
\newcommand{\lap}{\mbox{$\Delta \mkern-13mu /$\,}}
\newcommand{\nabb}{\mbox{$\nabla \mkern-13mu /$\,}}
\newtheorem{thm}{Theorem}
\newtheorem{prop}{Proposition}
\newtheorem{lem}{Lemma}
\newtheorem{cor}{Corollary}
\newtheorem{remark}{Remark}
\begin{document}

\title{Global solutions of nonlinear wave equations in time dependent inhomogeneous media}
\author{Shiwu Yang}
\date{}
\maketitle

\begin{abstract}
We consider the problem of small data global existence for a class of semilinear wave equations with null condition on a Lorentzian background $(\mathbb{R}^{3+1}, g)$ with a \textbf{time dependent metric $g$} coinciding with Minkowski metric outside the cylinder $\{\left.(t, x)\right| |x|\leq R\}$. We show that the small data global existence result can be reduced to two integrated local energy estimates and demonstrate these estimates in the particular case when $g$ is merely $C^1$ close to the Minkowski metric. One of the novel aspects of this work is that it applies to equations on backgrounds which do not settle to any particular stationary metric.
\end{abstract}

\section{Introduction}

In this paper, we study the behavior of solutions to the Cauchy problem
\begin{equation}
\label{THEWAVEEQ}
\begin{cases}
 \Box_g\phi=\frac{1}{\sqrt{-G}}\pa_\a\left(g^{\a\b}\sqrt{-G}\pa_{\b}\phi\right)=F(\phi, \pa\phi),\quad G=\text{det}(g),\\
 \phi(0,x)=\ep \phi_0(x), \pa_t\phi(0,x)=\ep \phi_1(x)
\end{cases}
\end{equation}
on a Lorentzian manifold $(\mathbb {R}^{3+1}, g)$ with initial data $\phi_0(x)$, $\phi_1(x)\in C_0^{\infty}(\mathbb{R}^n)$. The nonlinearity $F$ is assumed to satisfy the null condition with respect to the Minkowski metric
$$F(0, 0)=0, \quad dF(0, 0)=0,
$$
\begin{equation}
 \label{nullcond}
F(\phi,
\pa\phi)=A^{\a\b}\pa_\a\phi\pa_\b\phi+O(|\phi|^3+|\pa\phi|^3),
\end{equation}
in which $A^{\a\b}$ are constants and $A^{\a\b}\xi_\a \xi_\b=0$ whenever
$\xi_0^2=\xi_1^2+\xi_2^2+\xi_3^2$.

\bigskip

The corresponding Cauchy problem in Minkowski space
\begin{equation}
\label{THEWAVEEQM}
\begin{cases}
 \Box\phi=F(\phi, \pa\phi, \pa^2\phi), \\
 \phi(0,x)=\ep \phi_0(x), \pa_t\phi(0,x)=\ep \phi_1(x)
\end{cases}
\end{equation}
has been studied extensively. A classical result is the short time existence for large initial data \cite{hormander}, ~\cite{sogge}. The problem of long time behavior of the solutions with small data has been investigated by Fritz John ~\cite{john-lspan}, ~\cite{john-lowerb83}. In ~\cite{john-lspan}, application of the standard energy methods led to a lower bound on the time existence $T\geq O(\frac{1}{\ep})$, while in ~\cite{johnblowup}, it was shown that any nontrivial $C^3$ solution of the equation
$$\Box \phi=\phi_t^2
$$
with compactly supported initial data blows up in finite time. For
general nonlinearity, an almost global existence with a lower bound
on the time existence $T\geq C \exp\{\frac{C}{\ep}\}$ was
established in ~\cite{kl-johnalmostge}. These results and the
corresponding results in higher dimensions ~\cite{klgex},
~\cite{kl-ponce}, ~\cite{ge-shatah} rely on the decay properties of
solutions to a linear wave equation on $\mathbb{R}^{n+1}$.

\bigskip

A remarkable progress in understanding the problem of small data
global existence in three and higher dimensions has been achieved by
S. Klainerman ~\cite{klinvar}. His approach relied on the vector
field method, which connects the symmetries of the flat
$\mathbb{R}^{n+1}$ with the quantitative decay properties of
solutions of a linear wave equation and led to a small data global
existence in dimension $n>3$ and an almost global existence in
$\mathbb{R}^{3+1}$. Furthermore, in $\mathbb{R}^{3+1}$, a sufficient
condition on a quadratic nonlinearity $F$, which guarantees the
small data global existence, is the celebrated null condition
introduced by S. Klainerman ~\cite{klNullc}. Under this condition,
D. Christodoulou ~\cite{ChDNull} and S. Klainerman ~\cite{klNull}
independently proved the small data global existence result.

\bigskip

The approach of ~\cite{ChDNull} used the conformal method, which relies on the embedding of Minkowski space to the Einstein cylinder $R\times S^{3}$. In ~\cite{klNull}, S. Klainerman used the vector field method based on the weighted energy inequalities generated by the vector fields
\begin{equation}
\label{Lorenzinv} \Gamma=\left\{ \Om_{ij}=x_i\pa_j-x_j\pa_i,
L_i=t\pa_i+x_i\pa_t, \pa_\a, K=(t^2+r^2)\pa_t + 2tr\pa_r,
S=t\pa_t+r\pa_r\right\}.
\end{equation}

The vector field method found many applications, in particular to systems of
nonlinear wave equations on $\mathbb{R}^{3+1}$ with multiple speeds
  ~\cite{klmulti}, ~\cite{sideris-multispeed}, ~\cite{soggemulti} and obstacle
   problems ~\cite{sogge-metcalfe-nakamura}, ~\cite{sogge-metcalfe}. Due to
    the lack of Lorentz invariance, these works avoided the use of the hyperbolic
     rotations $L_i$, but still used the scaling vector field $S$.

\bigskip

Another application of the vector field method is to the quasilinear wave equation of the form
\begin{equation}
\label{Twaveeq}
\Box_{g(\phi)}\phi=F(\phi, \pa\phi)
\end{equation}
with $g(0)=m$, the Minkowski metric.  The quasilinear part of the equation
 $g^{\a\b}(\phi)\pa_{\a\b}\phi$ never satisfies the null condition. Nevertheless, 
several examples of ~\eqref{Twaveeq} still admit global solutions.
  In ~\cite{gx-lindblad}, ~\cite{gx-lindblad2}, H. Lindblad
  obtained the small data global existence result of the equation
\begin{equation*}
 g^{\a\b}(\phi)\pa_{\a\b}\phi=0
\end{equation*}
on $\mathbb{R}^{3+1}$. A particular case
\[
 \pa_{tt}\phi-(1+\phi)^2\Delta \phi=0
\]
was investigated previously by S. Alinhac ~\cite{alinhac-example}.

\bigskip

Separately, the motivation for studying ~\eqref{Twaveeq} arises
 from the problem of global nonlinear stability of Minkowski
  space originally proved by Christodoulou-Klainerman by recasting
   the problem as a system of Bianchi equations for the curvature
    tensor ~\cite{kcg}. Later, Lindblad-Rodnianski ~\cite{SMigor}
     obtained a different proof of stability of Minkowski in wave coordinates,
      in which the problem was formulated as a system of quasilinear wave equations for the components of the metric.

\bigskip

We should also mention  that a linear problem
\[
 \Box_{g(t, x)} \phi=0
\]
has been studied in ~\cite{alinhac-freedecay}. There it was shown that  $\phi$ has the decay properties similar to those of a solution of a linear wave equation on Minkowski space provided that $g(t, x)$ approaches the Minkowski metric $m$ suitably as $t\rightarrow \infty$ .

\bigskip
 A common feature of these problems is that the background metric $g$ converges to the Minkowski metric either by the assumptions in ~\cite{alinhac-freedecay} or, for the equation ~\eqref{Twaveeq}, by the assumption $g(0)=m$ and the expected convergence $\phi(t, x)\rightarrow 0$ as $t\rightarrow\infty$. The need for such convergence, or at least convergence to some stationary metric $g(x)$ is dictated by the vector field method. All applications of the vector field method require commutations with generators of the symmetries of Minkowski space, at the very least with the scaling vector field $S=t\pa_t +r\pa_r$. For the problem
\[
 \Box_g \phi=F,
\]
the error term coming from the commutation with $S$ (or $L_i$) would be of the form
$t \pa_t g^{\a\b}\pa_{\a\b}\phi$ which leads to the requirement that $t\pa_t g$ is at least bounded and thus $g$ decays to a stationary metric.

\bigskip

Therefore, the vector field method has not been so far successful in application to the equation
\begin{equation}
 \label{THEWAVEEQMTH}
\begin{cases}
 \Box_{g(t, x)}\phi=F(\phi, \pa\phi) ,\\
 \phi(0,x)=\ep \phi_0(x), \pa_t\phi(0,x)=\ep \phi_1(x),
\end{cases}
\end{equation}
in which $g(t, x)$ does not converge to a stationary metric $g(x)$.
These problems describe propagation of nonlinear waves in a
time-dependent inhomogeneous medium.

\bigskip

In this paper, we develop a new approach to treat the small data global existence problem for the equation ~\eqref{THEWAVEEQMTH}. This approach relies on a new method for proving decay for linear problems, developed by M. Dafermos and I. Rodnianski in ~\cite{newapp}. We now describe the assumptions and the main results.
\bigskip

We assume that the metric $g$ coincides with the Minkowski metric
outside a cylinder
$$g_{\a\b}=m_{\a\b}+ h_{\a\b}, \quad g^{\a\b}=m^{\a\b} + h^{\a\b},$$
where $h_{\a\b}$ are smooth functions supported in $\{(t, x)||x|\leq\f12 R\}$, $R$ is a fixed constant
 and $m$ is the Minkowski  metric. We make a convention that the Greek indices run from 0 to 3 while the Latin
 indices run from 1 to 3. 

In coordinates $(t, x)$, we denote $\pa_0=\pa_t$, $\pa_i=\pa_{x_i}$, $\pa=(\pa_t, \pa_1, \pa_2, \pa_3)$.
We also use the standard polar coordinates in Minkowski space $(t, r, \om)$. Let $\nabb$ denote the induced covariant
 derivative and $\lap$ the induced Laplacian on the spheres of constant $r$. And $\Om$ is a shorthand for
 the angular momentum $\Om_{ij}=x_i\pa_j-x_j\pa_i$. We also define the null coordinates $u=\frac{t-r}{2}$, $v=\frac{t+r}{2}$. And denote $T$ as the vector field $\pa_t$ in $(t, r,\om)$ coordinates.

\bigskip

In our argument, we estimate the decay of the solution with respect
to the foliation $\Si_{\tau}$, defined as follows:
\begin{align*}
&S_\tau:=\{u=u_\tau, v\geq v_\tau\},\\
&\Si_\tau:=\{t=\tau, r\leq R\}\cup S_\tau,
\end{align*}
where $u_\tau=\frac{\tau-R}{2}$, $v_\tau=\frac{\tau+R}{2}$. Thus the
corresponding energy flux in Minkowski space is
$$ E[\phi](\tau):=\int_{r\leq R}|\pa_t\phi|^2+|\pa_r\phi|^2+|\nabb\phi|^2dx +
\int_{S_\tau}\left(|\pa_v\phi|^2+|\nabb\phi|^2\right)r^2dvd\om.
$$

Additional to the assumption that $g$ coincides with the Minkowski metric outside the cylinder $\{(t, x)||x|\leq\f12 R\}$, we make two other assumptions:
\begin{itemize}
\item[$\mathcal A1$]: \textsl{There exists a positive constant $\lambda$ such that in $(t, x)$ coordinates
$$g_{00}\leq - \lambda, \quad \lambda|x|^2\leq g_{ij}x_i x_j\leq \lambda^{-1}|x|^2
$$
for any $x=(x_1, x_2, x_3)\in \mathbb {R}^3$.
\item[$\mathcal A2$]: There exists a positive number $\a<1$ and a constant $C_0$ such that two integrated local energy inequalities hold for any smooth function} $\phi(t, x)$
\begin{align}
\label{morawetz1}
&\int_{\tau_1}^{\tau_2}\int_{r\leq R}|\pa\phi|^2+\frac{\phi^2}{r}dxdt\leq C_0 E[\phi](\tau_1) + C_0
D[\Box_g\phi]_{\tau_1}^{\tau_2},\\
\label{morawetz2} &\int_{\tau_1}^{\tau_2}\int_{r\leq \f12
R}|\pa\phi_t|^2dxdt\leq C_0\left( E[\phi_t](\tau_1)
+D[\pa_t\Box_g\phi]_{\tau_1}^{\tau_2}+E[\phi](\tau_1)
+D[\Box_g\phi]_{\tau_1}^{\tau_2}\right),
\end{align}
\end{itemize}
where we denote
$$D[F]_{\tau_1}^{\tau_2}:=\int_{\tau_1}^{\tau_2}\int_{\Si_\tau}|F|^2(1+r)^{\a+1}dxd\tau.
$$
We define two sets
\begin{align*}
&A:=\{(k, j)|k+j\leq 8, k\leq 5\},\\
&B:=\{(k, j)|(k, j+2)\in A\},
\end{align*}
where $k$, $j$ are always nonnegative integers. And then denote
$$E_0=\sum\limits_{(k, j)\in A}E[\Om^k T^j\phi](0),
$$
which is determined by the initial data $\phi_0(x)$, $\phi_1(x)$ and the equation ~\eqref{THEWAVEEQMTH}.

\bigskip

Our main results are:

\begin{thm}
\label{maintheorem} Suppose the nonlinearity $F$ satisfies the null
condition ~\eqref{nullcond} and $g$ satisfies $\mathcal A1$ and
$\mathcal A2$. Assume that the initial data $\phi_0(x)$, $\phi_1(x)$
are smooth and supported in $\{|x|\leq R\}$. Then there exists
$\ep_0>0$, depending on $R$, $\a$, $E_0$, $\lambda$, $C_0$, $h$,
such that for all positive $\ep<\ep_0$, the equation
~\eqref{THEWAVEEQMTH} admits a unique global smooth solution.
Moreover, for the solution $\phi$, we have
\begin{itemize}
\item[(1)] Energy decay
$$E[\phi](\tau)\leq C E_0\ep^2  (1+\tau)^{-2+\a}
$$
for some constant $C$ depending on $R$, $\a$, $\lambda$, $C_0$ and $h$.
\item[(2)] Pointwise decay: for any $\a<\delta\leq 1$
 \begin{eqnarray*}
|\phi|\leq C_{\delta} \sqrt{E_0}\ep (1+r)^{-1}(1+|t-r+R|)^{-\f12+\f12\delta},\\
\sum\limits_{|\b|\leq 2}|\pa^\b\phi|\leq C \sqrt{E_0}\ep
(1+r)^{-\f12}(1+|t-r+R|)^{-1+\f12\a},
\end{eqnarray*}
where $C_{\delta}$ also depends on $\delta$.
\end{itemize}
\end{thm}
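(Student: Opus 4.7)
The plan is a bootstrap/continuity argument on the higher-order energies $E[\Om^k T^j\phi](\tau)$ for $(k,j)\in A$, closed by the Dafermos--Rodnianski $r^p$-weighted energy method applied to the radiation field on the outer pieces $S_\tau$. Because $g$ is not stationary, one cannot commute with the scaling vector field $S$ or the boosts $L_i$; only the rotations $\Om_{ij}$, which preserve the round structure outside the cylinder and produce commutator terms supported in $\{|x|\leq\f12 R\}$, and the time translation $T=\pa_t$, whose commutator with $\Box_g$ produces error terms $\pa_t h\cdot \pa^2\phi$ again supported in the cylinder, are at our disposal. This is precisely why assumption $\mathcal A2$ needs its second inequality: it controls the $\pa_t$-commuted quantities in the localized region, which the first inequality alone cannot handle.

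Assuming the bootstrap bounds $E[\Om^k T^j\phi](\tau)\leq 2CE_0\ep^2 (1+\tau)^{-2+\a}$ for $(k,j)\in A$ on a maximal time interval, I would first derive the pointwise estimates of part (2) by Klainerman--Sobolev along the foliation $\Si_\tau$; the structure of the sets $A$ and $B$ is tailored so that the requisite embeddings for $\phi$ and its first two derivatives go through without loss of derivatives. To upgrade the bootstrap, I would apply the $r^p$-hierarchy to the radiation field $\psi = r\cdot\Om^k T^j\phi$ in the exterior region where $g=m$: multiplying the wave equation satisfied by $\psi$ by $r^{p-2}\pa_v\psi$ and integrating on the region bounded by $S_{\tau_1}, S_{\tau_2}$ yields
$$ \int_{S_{\tau_2}} r^p|\pa_v\psi|^2\,dvd\om + p\int_{\tau_1}^{\tau_2}\int_{S_\tau} r^{p-1}|\pa_v\psi|^2\,dvd\om d\tau \leq \int_{S_{\tau_1}} r^p|\pa_v\psi|^2\,dvd\om + \text{nonlinear errors}, $$
a hierarchy in $p\in(\a,2)$ which, via pigeonhole together with $\mathcal A2$ to absorb the $r\leq R$ contributions, yields the energy decay $E[\Om^k T^j\phi](\tau)\les (1+\tau)^{-2+\a}$.

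Closing the bootstrap then amounts to showing that the source terms $D[\Box_g(\Om^k T^j\phi)]_{\tau_1}^{\tau_2}$ on the right-hand side of $\mathcal A2$ are small. Outside the cylinder, the null condition gives a gain of one good derivative: each quadratic null form is bounded by $(|T\phi|+|\nabb\phi|)|\pa\phi|$, and the pointwise decay of $T\phi$ and $\nabb\phi$ is better than that of the full gradient by an extra factor $(1+|t-r|)^{-1/2}(1+r)^{-1/2}$. This makes the weighted spacetime integral $\int\!\!\int r^{\a+1}|F|^2$ converge with total size $O(\ep^4 E_0^2)$, which can be absorbed once $\ep$ is small. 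Inside the cylinder, where no null structure is available, one bounds $|F|$ pointwise by the already-proven decay of $\pa\phi$ and uses the localized integrated energy estimates in $\mathcal A2$ directly.

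The main obstacle is closing the induction for $T$-commuted quantities: since $T$ does not commute with $\Box_g$, the integrated local energy of $\pa_t(\Om^k T^j\phi)$ must be estimated via the second inequality of $\mathcal A2$, whose right-hand side contains the higher-order source $D[\pa_t\Box_g\phi]$ — one derivative more than naively available. Orchestrating the induction on $(k,j)\in A$ so that this derivative loss is compensated by the $\ep$-smallness and by the gain from the null condition, while simultaneously pushing $p$ all the way to $1+\a$ in the $r^p$-hierarchy to match the weight used in $\mathcal A2$, is the delicate core of the argument and dictates the precise numerology of the sets $A$ and $B$.
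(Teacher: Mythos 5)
Your overall architecture does match the paper's: commuting only with $\Om$ and $T$, letting $\mathcal{A}2$ absorb the commutator errors supported in the cylinder, and combining the integrated local energy estimate with the Dafermos--Rodnianski $r^p$-hierarchy for $\psi=r\phi$ to extract energy decay, then bootstrapping on the decay of the nonlinearity. However, there is a genuine gap at the crucial exterior estimate for the null form (the analogue of \eqref{nullbdout}). First, $T=\pa_t$ is not a good derivative: the null condition yields a bound of the schematic form $(|\pa_v\phi|+|\nabb\phi|)\,|\pa\phi|$ (derivatives tangent to the outgoing cones times the full gradient), not $(|T\phi|+|\nabb\phi|)\,|\pa\phi|$. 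Second, and more seriously, your plan to make $\int\!\!\int r^{3-\a}|F|^2$ converge rests on an \emph{improved pointwise} decay of the good derivatives by an extra factor $(1+|t-r|)^{-\f12}(1+r)^{-\f12}$; no such bound is ever established in this framework, and it is precisely the kind of estimate that classically requires commuting with $S$ and $L_i$, which is forbidden here (the available pointwise rates are only those of part (2) of the theorem, with no gain for $\pa_v\phi$). The paper instead expands the null form in $\psi=r\phi$, isolates the dangerous product $|\pa_v\psi_1|^2|\pa_u\psi_2|^2$, and controls it by purely \emph{integrated} quantities: the $r^p$-weighted fluxes of \eqref{pWEineq}, the energy flux of $\pa_u\psi$ through ingoing null hypersurfaces $v=\mathrm{const}$ (Lemma~\ref{crossnullen}), and bounds on $\sup_{u,\om} r^{1-\a}|\pa_v\psi|^2$, resp.\ $\sup_{v} r^{-\a}\int_\om|\pa_u\psi|^2d\om$, obtained from Sobolev embedding on the spheres combined with integrating the equation \eqref{waveqpsi} in $u$, resp.\ $v$. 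Supplying this mechanism (or an equivalent substitute) is the delicate core of the argument and is missing from your sketch.

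Two smaller discrepancies. To reach $E[\phi](\tau)\les (1+\tau)^{-2+\a}$ the hierarchy must be run at the top weight $p=2-\a$ and then again at $p=1$, with a dyadic pigeonhole and an interpolation in between; stopping at $p=1+\a$ as you propose yields only $(1+\tau)^{-1-\a}$, which is weaker than the stated rate whenever $\a<\f12$. Also, the pointwise bounds of part (2) are not obtained from a Klainerman--Sobolev inequality (again unavailable without the boosts), but from elliptic estimates in $\{r\leq R\}$ --- this is exactly where the second inequality of $\mathcal{A}2$ and the uniform ellipticity of $(g^{ij})$ guaranteed by $\mathcal{A}1$ enter --- and, for $r\geq R$, from the weighted fluxes together with one-dimensional integration in $v$ and Sobolev embedding on the spheres. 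Finally, the paper closes not by a continuity argument on $\phi$ itself but by proving uniform bounds along the Picard iterates \eqref{iteration} and invoking the local theory plus H\"ormander's continuation criterion, which in particular requires the uniform $C^2$ bound furnished by the equation and Schauder estimates inside the cylinder; your bootstrap-on-the-solution variant would need an analogous ingredient.
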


\begin{remark}
 The argument here is also applicable to the corresponding problems in higher dimension without null condition.
\end{remark}

\begin{remark}
 The same conclusion holds if the assumption $\mathcal{A}_2$ is replaced with that $g$ satisfies ~\eqref{morawetz1} and the
deformation tensor $\pi^{\pa_t}_{\a\b}=\pa_t g_{\a\b}$ is small, independent of the initial data. We remark
 that this is consistent with our attempt to investigate nonlinear wave equations on backgrounds far from Minkowski space.
\end{remark}

\begin{remark}
 It is not necessary to require that the initial data have compact support. The general assumption on the initial data can be that the following quantity
\[
 \sum\limits_{(k, j)\in A}\iint_{\mathbb{R}^{3}}r^{2-\a}|\pa\Om^k T^j\phi(0, x)|^2dx
\]
is sufficiently small, where $\a$ comes from $\mathcal{A}_2$ or the smallness assumption ~\eqref{smallnesscond}.
\end{remark}

The condition $\mathcal{A}1$ is easy to satisfy. It is equivalent to say that the background is uniformly hyperbolic
and the vector field $\pa_t$ is uniformly timelike. Therefore we reduce the small data global existence to the
two integrated local energy inequalities. Below, we describe particular conditions for which $\mathcal{A}_2$
 can be explicitly verified. We will show that under the assumption that $g$ is $C^1(\mathbb{R}^{3+1})$
 close to the Minkowski metric, $\mathcal{A}_1$ and $\mathcal{A}_2$ hold and hence Theorem ~\ref{maintheorem} follows. More precisely, denoting
$$H=\max \{\|h_{\a\b}\|_{C^1}, \|h^{\a\b}\|_{C^1}\},
$$
then

\begin{thm}
\label{thm2} Suppose $h$ is supported in the cylinder $\{(t,
x)||x|\leq \frac{R}{2}\}$. Then there exists a positive constant
$\ep_0$, depending only on $R$, such that if $H<\ep_0$, then $g$
satisfies
 conditions $\mathcal{A}1$ and $\mathcal{A}2$. Hence Theorem ~\ref{maintheorem} holds for some $\a<1$.
\end{thm}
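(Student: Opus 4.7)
The plan is to verify $\mathcal{A}1$ directly from $C^0$ smallness of $h$ and to establish $\mathcal{A}2$ by the classical Morawetz multiplier method, treating the perturbation $h_{\a\b}$ as an absorbable error. Condition $\mathcal{A}1$ is essentially free: $g_{00}=-1+h_{00}\leq -1+H$ and $g_{ij}x_ix_j=|x|^2+h_{ij}x_ix_j$ with $|h_{ij}x_ix_j|\leq CH|x|^2$, so both bounds hold with $\lambda=\f12$ for $H$ smaller than a universal constant depending only on $R$.

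For the first inequality of $\mathcal{A}2$, I would test $\Box_g\phi$ against a Morawetz-type multiplier of the form $X\phi+(\chi(r)/r)\phi$ with $X=f(r)\pa_r$, where $f$ and $\chi$ are chosen so that on Minkowski space the bulk term dominates $|\pa\phi|^2+\phi^2/r$ in $\{r\leq R\}$ (the classical three-dimensional Morawetz identity), while $f$ remains bounded as $r\to\infty$ so that the boundary terms on $\Si_{\tau_1}, \Si_{\tau_2}$ are controlled by $E[\phi]$ via $\mathcal{A}1$. The $g$-deformation tensor splits as $\piX_{\a\b}=\f12\Lie_X m_{\a\b}+\f12\Lie_X h_{\a\b}$; the second piece is supported in $\{r\leq R/2\}$ and pointwise bounded by $CH$, so the resulting bulk error is absorbed by the positive Minkowski contribution for $H$ small. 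The source $\int(X\phi+(\chi/r)\phi)\Box_g\phi$ is controlled by $D[\Box_g\phi]_{\tau_1}^{\tau_2}$ via Cauchy--Schwarz with the weight $(1+r)^{\a+1}$ for any $\a<1$, with the small leakage reabsorbed on the left using the Hardy inequality.

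For the second inequality, I would commute with $\pa_t$, producing
\[
\Box_g(\pa_t\phi)=\pa_t(\Box_g\phi)-(\pa_t g^{\a\b})\pa_{\a\b}\phi-(\pa_t A^\b)\pa_\b\phi,
\]
where $A^\b$ collects the first-order coefficients of $\Box_g$ and all commutator terms are supported in $\{r\leq R/2\}$. Applying \eqref{morawetz1} to $\pa_t\phi$ yields the left-hand side of \eqref{morawetz2} plus a commutator contribution inside $D[\Box_g\pa_t\phi]$ of schematic form
\[
\int_{\tau_1}^{\tau_2}\int_{r\leq R/2}\Big(|\pa_t g|^2|\pa^2\phi|^2+C(h)|\pa\phi|^2\Big)dxdt.
\]
The $|\pa\phi|^2$ piece is absorbed by \eqref{morawetz1} itself (with a constant depending on $h$, which is permitted). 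For the spatial Hessian piece, the plan is to solve the equation for $g^{ij}\pa_i\pa_j\phi$ in terms of $\pa_t^2\phi$, $\pa_t\pa_i\phi$, $\pa\phi$, and $\Box_g\phi$, and invoke the uniform ellipticity of $g^{ij}$ from $\mathcal{A}1$ to bound the full spatial Hessian pointwise by $|\pa\phi_t|^2+|\Box_g\phi|^2+|\pa\phi|^2$. Each resulting error carries the prefactor $|\pa_t g|^2\leq H^2$, so for $H$ small these can be absorbed respectively into the left-hand side of \eqref{morawetz2}, into $D[\Box_g\phi]_{\tau_1}^{\tau_2}$, and into \eqref{morawetz1}.

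The main obstacle is precisely this last absorption step: the commutator $[\Box_g,\pa_t]\phi$ carries a full spacetime Hessian, while the second Morawetz inequality only controls mixed derivatives $\pa\pa_t\phi$; recovering spatial Hessians through ellipticity and the equation, quantitatively in $H$ and uniformly in $(t,x)$, is the delicate technical point. Once \eqref{morawetz1}--\eqref{morawetz2} are established with constants depending only on $R$, $\lambda$, and $h$ (for some $\a<1$), Theorem \ref{maintheorem} applies directly and yields the conclusion.
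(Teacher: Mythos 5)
Your proposal is correct and follows essentially the same route as the paper: $\mathcal{A}1$ from $C^0$-smallness, \eqref{morawetz1} via the multiplier $f\pa_r$ with bounded $f$ and a zeroth-order correction $\chi$ (the $h$-errors, supported in $r\leq \f12 R$ and of size $H$, being absorbed), and \eqref{morawetz2} by commuting with $\pa_t$ and recovering the spatial Hessian through the uniform ellipticity of $(g^{ij})$ and the equation. The absorption step you single out as delicate is exactly what the paper carries out in Lemma \ref{errorcontrol} — in $L^2$ with a spatial cutoff (not pointwise) and explicit constants, choosing $\ep_0=\sup_{\b<1}\frac{\b}{700(1+\f12 R)^{\b+1}}$ so that $H^2$ times the constant of the integrated local energy estimate for $\pa_t\phi$ is strictly less than one — the only organizational difference being that the paper proves Theorems \ref{maintheorem} and \ref{thm2} in parallel under the smallness condition \eqref{smallnesscond} rather than verifying $\mathcal{A}2$ first and then invoking Theorem \ref{maintheorem} as a black box.
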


\begin{remark}
From Theorem \ref{thm2}, we retrieve the classical result proved by
Klainerman \cite{klNull} and Christodoulou ~\cite{ChDNull} in
Minkowski space where $H=0$.
\end{remark}

The main difficulty of considering nonlinear wave equations in inhomogeneous media is the lack of symmetries compared to Minkowski space. In order to make use of those symmetries in Minkowski space, previous works have relied on the fact that the background metric decays to its stationary state, which is not satisfied in this context. In fact, we even allow the background metric $g$ to stay far from the flat one provided that we have two integrated local energy inequalities.

\bigskip

In our approach, we avoid the use of vector fields containing positive weights in $t$,
e.g, $S=t\pa_t+r\pa_r$, $L_i=x_i\pa_t+t\pa_i$. Traditionally, vector fields from the set $\Gamma$
 ~\eqref{Lorenzinv} are used both as multipliers and commutators. In this paper, we only commute
  with $\pa_t$, $\Om_{ij}$. The role of multiplier vector fields is played by $\pa_t$, $f\frac{\pa}{\pa r}$,
   where $f$ is some appropriate function, in the derivation of the integrated local energy decay,
    and by the family of vector fields $r^p(\pa_t+\pa_r)$ \textbf{localized} to a far away region $r\geq R$.
     The key in our argument is a new approach, developed by M. Dafermos and I. Rodnianski in ~\cite{newapp},
     to the problem of decay, in particular, of the energy flux $E[\phi](\tau)$ for solutions of linear wave
      equations. This new approach is a combination of an integrated local energy inequality and a p-weighted
       energy inequality in a neighborhood of the null infinity. We will discuss them in details in Section 2 and
        Section 3.

\bigskip

The plan of this paper is as follows: we first establish an
integrated energy inequality in the whole space time in Section 2 by
using the vector field method; then in Section 3, we revisit the
p-weighted
 energy inequality developed in ~\cite{newapp} and prove the decay of the energy flux $E[\phi](\tau)$.
 In Section 4, we use elliptic estimates to get the pointwise decay of the solution; and then in the last two
  sections, we close our boostrap argument and conclude our main theorems.

\section{Integrated Local Energy Inequality}
In this section,  we use the multiplier method to prove an
integrated energy estimate in the whole space time under the
conditions $\mathcal {A}1$ and $\mathcal{A}2$ or the smallness
assumption on $H$. This kind of estimates were first proven in
~\cite{mora2}. Here we would follow the way in ~\cite{dr3}, also see
~\cite{ILEsterbenz}.

We recall the energy-momentum tensor
\[
{\mathbb T}_{\mu\nu}[\phi]=\pa_\mu\phi\pa_\nu\phi-\frac12 g_{\mu\nu}\pa^{\gamma}\phi\pa_{\gamma}\phi.
\]

Given a vector field $X$, we define the currents
\[
J^X_\mu[\phi]= {\mathbb T}_{\mu\nu}[\phi]X^\nu, \qquad
K^X[\phi]= {\mathbb T}^{\mu\nu}[\phi]\pi^X_{\mu\nu},
\]
where $\pi^X_{\mu\nu}=\frac12 \mathcal{L}_Xg_{\mu\nu}$ is the deformation tensor of the vector field $X$.
Recall that
\[
D^\mu J^X_\mu[\phi] = X(\phi)\Box_g\phi+K^X[\phi].
\]

We denote $n$ as the unit normal vector field to hypersurfaces and $d\si$ the induced measure, $d\vol$ the
volume form of $(R^{3+1}, g)$. Denote the null infinity from $\tau_1$ to $\tau_2$ as
\begin{equation*}
\mathcal I_{\tau_1}^{\tau_2} :=\{(u,v,\omega)|u_{\tau_1}\leq u \leq u_{\tau_2}, v=\infty\}
\end{equation*}
and the corresponding energy flux
$$I[\phi]_{\tau_1}^{\tau_2}:=\left.\int_{\mathcal I_{\tau_1}^{\tau_2}}\left((\pa_u\phi)^2+|\nabb\phi|^2\right)
 r^2dud\om\right|_{v=\infty},
$$
which can be interpreted as a limit when $v\rightarrow\infty$.

Define the modified energy
$$\tilde{E}[\phi](\tau)=E[\phi](\tau)+I[\phi]_{0}^{\tau}.
$$

Take a vector field defined as follows
$$X=f\pa_r=f\frac{x_i}{r}\pa_i,$$ where $f$ is a function of $r$.
Consider the region bounded by the hypersurfaces $\Si_{\tau_1}$ and
$\Si_{\tau_2}$. Using Stoke's formula,  we have
\begin{align}
\notag&\quad\int_{{\Sigma}_{\tau_1}}J^X_\mu[\phi]n^\mu d\sigma - \int_{{\Sigma}_{\tau_2}}J^X_\mu[\phi]n^\mu d\sigma-\int_{\mathcal I_{\tau_1}^{\tau_2}}J^X_\mu[\phi]n^\mu d\sigma\\
&=\int_{\tau_1}^{\tau_2}\int_{\Sigma_\tau}D^\mu J^X_{\mu}[\phi]d\vol=\int_{\tau_1}^{\tau_2}\int_{\Sigma_\tau}FX(\phi)
 + K^X[\phi] d\vol,
\label{energyeq}
\end{align}
where
\begin{align*}
K^X[\phi]=\mathbb{T}^{\mu\nu}[\phi]\pi^X_{\mu\nu}&=\pa_j(f\frac{x_i}{r})\pa^{j}\phi\cdot \pa_i \phi-(\f12 f'+r^{-1}f)\pa^{\gamma}\phi \pa_{\gamma}\phi\\
& \quad+\f12 f\frac{x_i}{r}\pa_i g_{\mu\nu}\cdot \pa^{\mu}\phi
\pa^{\nu}\phi-\frac{1}{4}f\frac{x_i}{r} \pa_{i}g_{\mu\nu}\cdot
g^{\mu\nu}\pa^{\gamma}\phi\pa_{\gamma}\phi,
\end{align*}
in which we denote $f'$ as $\pa_r f$.

Choose another radial symmetric function $\chi$ of $r$. We have the following equality
\begin{align*}
 -\chi\pa^{\gamma}\phi\pa_{\gamma}\phi + \f12\Box_g\chi\cdot\phi^2
 &=-\f12 \chi\left(\Box_g\phi^2 - 2\phi \Box_g\phi\right)+\f12 \Box_g\chi \cdot\phi^2\\
 &= \f12\left(\Box_g\chi\cdot \phi^2 - \chi\Box_g\phi^2\right)+\chi\phi\Box_g\phi\\
 &=\f12 D^{\mu}\left(\pa_{\mu}\chi\cdot \phi^2 - \chi\pa_{\mu}\phi^2\right) +
 \chi\phi\Box_g\phi.
\end{align*}
Add the above equality to both sides of \eqref{energyeq} and modify the current as
\begin{equation}
\label{mcurent} \tilde{J}_{\mu}^X[\phi]=J_{\mu}^X[\phi] -
\f12\pa_{\mu}\chi \cdot\phi^2 + \f12 \chi\pa_{\mu}\phi^2.
\end{equation}
Then we obtain
\begin{align}
\label{menergyeq}
&\int_{{\Sigma}_{\tau_1}}\tilde{J}^X_\mu[\phi]n^\mu d\sigma - \int_{{\Sigma}_{\tau_2}}\tilde{J}^X_\mu[\phi]n^\mu d\sigma-\int_{\mathcal I_{\tau_1}^{\tau_2}}\tilde{J}^X_\mu[\phi]n^\mu d\sigma\\
\notag&=\int_{\tau_1}^{\tau_2}\int_{\Sigma_\tau}
FX(\phi)+F\phi\chi+(\chi-r^{-1}f+\f12 f')(\pa_r\phi)^2 - \f12\Box_g\chi\cdot\phi^2\\
\notag& \quad+(r^{-1}f + \f12 f' - \chi)(\pa_t\phi)^2+(\chi-\f12
f')|\nabb\phi|^2+ error,
\end{align}
where
\begin{align}
\notag
error &= \pa_j(f\frac{x_i}{r})h^{j\mu}\pa_{\mu}\phi\cdot \pa_i \phi-(\f12 f'+r^{-1}f-\chi)(\pa^{\gamma}\phi \pa_{\gamma}\phi+|\pa_t\phi|^2-|\pa_i\phi|^2)\\
 \label{error}
&\quad-\f12 f\frac{x_i}{r}\pa_i g^{\mu\nu}\cdot \pa_{\mu}\phi
\pa_{\nu}\phi-\frac{1}{4}f\frac{x_i}{r}\pa_{i} g_{\mu\nu}\cdot
g^{\mu\nu}\pa^{\gamma}\phi\pa_{\gamma}\phi.
\end{align}
Here recall that $h^{\a\b}=g^{\a\b}-m^{\a\b}$.

\bigskip

The idea is that we choose functions $f$ and $\chi$ such that the
coefficients on the right hand side of \eqref{menergyeq} are
positive. And then control the left hand side by the energy
$\tilde{E}[\phi]$. Thus we end up with an integrated energy
inequality in the whole space time. To proceed, let's first prove
several lemmas in order to estimate the left hand side of
~\eqref{menergyeq}.

We first show that the spherical average of the solution near the null infinity can be bounded by the energy.
\begin{lem}
\label{lem1} Let $\phi(t, x)\in C^{\infty}(\mathbb{R}^{3+1})$. Then
we have
$$r\int_{\om}|\phi(t, r, \om)|^2 d\om\leq \tilde{E}[\phi](\tau),\qquad \forall (t, r, \om)\in S_\tau\cup
\mathcal I_{0}^{\tau}.$$
\end{lem}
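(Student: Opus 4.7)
The plan is to represent $\phi$ along the outgoing null generator $u=u_\tau$ of $S_\tau$ as an integral of $\pa_v\phi$ from the point out to null infinity, and then to exploit Cauchy--Schwarz against the weight $(r')^2$ already built into the $S_\tau$ flux.

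Concretely, for a point $(t,r,\om)\in S_\tau$, I would parametrize the null ray by $v$, so that $r'=v'-u_\tau$ along $u=u_\tau$. Granted the decay $\phi(u_\tau,v',\om)\to 0$ as $v'\to\infty$ (the case for the class of $\phi$ to which the lemma is applied, e.g.\ solutions of \eqref{THEWAVEEQMTH} with compactly supported data), the fundamental theorem of calculus gives
\[
\phi(u_\tau,v,\om)=-\int_v^{\infty}\pa_{v'}\phi(u_\tau,v',\om)\,dv'.
\]
Applying Cauchy--Schwarz to the split $\pa_{v'}\phi=(r')^{-1}\cdot r'\pa_{v'}\phi$, together with the explicit computation $\int_v^{\infty}(v'-u_\tau)^{-2}\,dv'=1/r$, yields
\[
|\phi(u_\tau,v,\om)|^2\leq\frac{1}{r}\int_v^{\infty}(r')^2|\pa_{v'}\phi(u_\tau,v',\om)|^2\,dv'.
\]
Multiplying by $r$, enlarging the $v'$-range to $[v_\tau,\infty)$, and integrating over $\om$ gives
\[
r\int_{\om}|\phi|^2\,d\om\leq\int_{S_\tau}(\pa_v\phi)^2\,r^2\,dv\,d\om\leq E[\phi](\tau)\leq\tilde E[\phi](\tau),
\]
which is the claim on $S_\tau$.

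For a point on the null infinity portion $\mathcal I_0^\tau$, the inequality is to be read in the limit $v\to\infty$: the same Cauchy--Schwarz estimate shows that $r|\phi|^2$ is dominated by a convergent tail that vanishes as $v\to\infty$, so the bound holds trivially as $0\leq\tilde E[\phi](\tau)$. The one mildly delicate point is justifying the vanishing of $\phi$ at null infinity used in the fundamental-theorem-of-calculus step; for the solutions to which the lemma will be applied this follows from standard $1/r$-type decay of $\phi$ along outgoing null rays, and it is this justification rather than the Cauchy--Schwarz computation itself that constitutes the main (modest) obstacle.
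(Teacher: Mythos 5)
Your Cauchy--Schwarz computation along the outgoing ray $u=u_\tau$ is exactly the paper's estimate \eqref{bd}, so the quantitative core of your argument coincides with the paper's. The genuine gap is the step you yourself flag: the vanishing of $\phi$ at null infinity needed for the fundamental-theorem-of-calculus representation. You propose to justify it by "standard $1/r$-type decay of $\phi$ along outgoing null rays" for solutions of \eqref{THEWAVEEQMTH} with compactly supported data. That justification is not available here: no decay of solutions has been established at this point (decay is precisely what the paper is proving, through a bootstrap in which this lemma is an ingredient), and the lemma is invoked for a much wider class of functions than solutions of the original equation --- it is applied to the commuted fields $\Om^k T^j\phi$, to the Picard iterates $\phi_n$, and inside Lemma~\ref{lem2}, Corollary~\ref{cor2} and Proposition~\ref{prop1}, where $\phi$ is only assumed smooth with $\tilde{E}[\phi](\tau)<\infty$. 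An appeal to decay of solutions is therefore circular, and without some replacement the limit $\phi(u_\tau,v,\om)\to 0$ as $v\to\infty$ can simply fail for a general smooth function with finite ordinary energy $E[\phi](\tau)$ (a nonzero radiation field $\phi_\infty(\om)$ is compatible with finite $E$, and then $r\int_\om\phi^2\,d\om$ blows up).

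This is exactly why the lemma is stated with the modified energy $\tilde{E}[\phi](\tau)=E[\phi](\tau)+I[\phi]_{0}^{\tau}$ rather than $E[\phi](\tau)$. The paper's proof extracts the vanishing at null infinity from the hypotheses themselves: finiteness of $I[\phi]_{0}^{\tau}$ together with Cauchy--Schwarz in $u$ at $v=\infty$ (where $\int_{0}^{u_\tau} r^{-2}\,du\big|_{v=\infty}=0$) shows that the total variation of $\phi$ in $u$ along $\mathcal I_{0}^{u_\tau}$ vanishes, and finite speed of propagation (data supported in $\{|x|\leq R\}$) gives $\phi\equiv 0$ on $S_0$, hence $\phi=0$ on all of $\mathcal I_{0}^{u_\tau}$; only then does the representation $\phi=-\int_v^\infty\pa_{v'}\phi\,dv'$ and your subsequent estimate go through. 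To repair your proposal, replace the appeal to decay of solutions by this argument (or an equivalent use of the $I[\phi]_{0}^{\tau}$ term); the remainder of your write-up, including the reading of the statement on $\mathcal I_{0}^{\tau}$ as a limit, then matches the paper.
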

\begin{proof} Suppose the point can be represented as $(u,v,\om)=(t, r, \om)$ in our coordinate systems.
It suffices to consider the case when $\tilde{E}(\tau)$ is finite.
We claim that $\phi$ vanishes at the null infinity
$\mathcal{I}_{0}^{u_\tau}$. In fact since $I[\phi]_{0}^{\tau}$ is
finite, we conclude that $
 \int_{\mathcal I_0^{u_\tau}}\phi_u^2d\si
$ is finite. Recall that $r=v-u$. We infer that
\begin{equation*}
 \int_{\om}\left(\int_{0}^{u_\tau}|\pa_u\phi|du\right)^2d\om\leq \int_{\mathcal I_0^{u_\tau}}\phi_u^2d\si\cdot
 \left.\int_{0}^{u_\tau} r^{-2}du\right|_{v=\infty}=0.
\end{equation*}
Hence $\pa_u\phi$ vanishes on $\mathcal{I}_{0}^{u_\tau}$. Notice
that the initial data are supported in $r\leq R$. Then the finite
speed of propagation for wave equations ~\cite{sogge} implies that
$\phi$ vanishes on $S_0$. We thus conclude that $\phi$ vanishes on
$\mathcal{I}_{0}^{u_\tau}$. Therefore
\begin{equation}
\label{bd} \int_{\om}|\phi|^2d\om
=\int_{\om}\left(\int_{v}^{\infty}\pa_v\phi dv\right)^2d\om
\leq\int_{S_\tau}\phi_v ^2d\si\cdot\int_{v}^\infty r^{-2}dv
\leq\frac{1}{r}E[\phi](\tau),
\end{equation}
where on $S_{\tau}$, $d\si=r^2dvd\om$. Hence the lemma holds.
\end{proof}

\bigskip

The following analogue of Hardy's inequality will be used later on. We borrow the method from ~\cite{dr3}.
\begin{lem}
\label{lem2} If $\phi$ is a smooth, then
\begin{equation}
\label{phiboundH} \int_{r\leq
R}\left(\frac{\phi}{1+r}\right)^2dx+\int_{S_{\tau}}\left(\frac{\phi}{1+r}\right)^2r^2dvd\om
\leq 6\tilde{E}[\phi](\tau).
\end{equation}
In particular
\begin{equation}
 \label{phiboundn}
\int_{r\leq R}\phi^2dx\leq 6(1+R)^2\tilde{E}[\phi](\tau).
\end{equation}
\end{lem}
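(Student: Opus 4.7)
The plan is to establish a Hardy-type inequality along the radial direction of $\Si_\tau$ by integration by parts with a carefully chosen weight, and then deduce the second bound from the first by noting $(1+r)^{-2} \geq (1+R)^{-2}$ on $\{r\leq R\}$.

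First I would parameterize $\Si_\tau$ by $(r,\om) \in [0,\infty)\times S^2$, where the point corresponds to $(\tau,r,\om)$ for $r\leq R$ and to $(r+\tau-R,r,\om)\in S_\tau$ for $r\geq R$. Let $D_r$ denote the tangent derivative to $\Si_\tau$ in the $r$-direction, so $D_r\phi=\pa_r\phi|_{t=\tau}$ on the inner piece and $D_r\phi=\pa_v\phi|_{u=u_\tau}$ on the outer piece. By the definition of $E[\phi](\tau)$, the weighted $L^2$ norm of $D_r\phi$ is controlled:
$$\int_0^\infty\int_\om r^2(D_r\phi)^2 d\om dr = \int_{r\leq R}(\pa_r\phi)^2 dx + \int_{S_\tau}(\pa_v\phi)^2 r^2 dv d\om \leq E[\phi](\tau) \leq \tilde E[\phi](\tau).$$

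Next, I would take the weight $g(r) = r^3/(1+r)^2$ and integrate $\pa_r(g\phi^2) = g'\phi^2 + 2g\phi D_r\phi$ from $0$ to $\infty$ for each fixed $\om$. The choice of $g$ ensures (i) $g(0)=0$, (ii) $g'(r) = r^2(r+3)/(1+r)^3 \geq r^2/(1+r)^2$, and (iii) $g^2/g' = r^4/((r+3)(1+r)) \leq r^2$. Moreover $g\phi^2 \to 0$ as $r\to\infty$: one has $g\sim r$ at infinity, and the pointwise-in-$\om$ Cauchy--Schwarz underlying Lemma \ref{lem1} gives $r\phi(r,\om)^2 \leq \int_v^\infty(\pa_{v'}\phi)^2 r'^2 dv'$, which is a tail of a finite integral. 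Continuity of $\phi$ across the corner $r=R$ makes the one-sided boundary contributions at $R$ from $[0,R]$ and $[R,\infty)$ cancel, so
$$\int_0^\infty g'\phi^2 dr = -2\int_0^\infty g\phi D_r\phi dr.$$
Cauchy--Schwarz and absorption then yield $\int_0^\infty g'\phi^2 dr \leq 4\int_0^\infty (g^2/g')(D_r\phi)^2 dr$, and combining with (ii), (iii) and integrating in $\om$ gives
$$\int_{r\leq R}\Big(\frac{\phi}{1+r}\Big)^2 dx + \int_{S_\tau}\Big(\frac{\phi}{1+r}\Big)^2 r^2 dv d\om \leq 4\tilde E[\phi](\tau),$$
which is exactly \eqref{phiboundH} (with the sharper constant $4\leq 6$). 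The second estimate \eqref{phiboundn} then follows immediately since $1\leq (1+R)^2/(1+r)^2$ on $\{r\leq R\}$.

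The only genuine choice in this argument is the weight function $g$; once $g$ is fixed, the two algebraic inequalities on $g'$ and $g^2/g'$ are routine, the corner at $r=R$ produces no contribution because $g\phi^2$ is continuous across it, and the boundary term at $r=\infty$ vanishes by the very mechanism already established in the proof of Lemma \ref{lem1}. I expect the main obstacle to be exactly the clever guess of $g(r)=r^3/(1+r)^2$, after which everything is standard Cauchy--Schwarz and absorption.
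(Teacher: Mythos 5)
Your proof is correct and follows essentially the same route as the paper: a one-dimensional Hardy-type integration by parts in $r$ over all of $\Si_\tau$ with a weight that vanishes at the origin, grows linearly at infinity and has derivative comparable to $r^2/(1+r)^2$, with the boundary term at null infinity controlled by the mechanism of Lemma \ref{lem1} and the cross term handled by Cauchy--Schwarz and absorption. The only differences are your choice of weight $g(r)=r^3/(1+r)^2$ in place of the paper's $\eta(r)=r-2\ln(1+r)+\frac{r}{1+r}$ and your killing of the boundary term rather than bounding it by $\tilde{E}$, which yields the slightly better constant $4$.
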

\begin{proof}
Take a function $\eta$ as follows
$$\eta(r)=r-2\ln(1+r)+\frac{r}{1+r}.
$$
Then
$$\eta'(r)=\frac{r^2}{(1+r)^2}, \quad \eta(0)=0 , \quad |\eta(r)|\leq
r.
$$
Denote $d\si$ as $dx$ when $r\leq R$ and $r^2dvd\om$ when $r\geq R$.
Integration by parts and using lemma ~\ref{lem1} imply that
\begin{align}
\notag
 \int_{\Si_\tau}\left(\frac{\phi}{1+r}\right)^2d\si&=\int_{\om}\int_{0}^{R}\phi^2 d \eta d\om+\int_{\om}\int_{v_\tau}^{\infty}\phi^2 d \eta d\om\\
\notag
                                             &=\left.\int_{\om}\phi^2 \eta d\om\right|_{0}^{\infty}-2\int_{r\leq R} \eta \phi\cdot \phi_r dr d\om-2\int_{S_\tau} \eta \phi\cdot \phi_v dv d\om\\
\notag
                                             &\leq \tilde{E}[\phi](\tau) +\f12 \int_{\Si_\tau} \eta^2 r^{-4}\phi^2d\si+2\int_{r\leq R}\phi_r^2dx +2\int_{S_\tau} \phi_v^2 d\si\\
\label{phiboundH1}
                                              &\leq 3\tilde{E}[\phi](\tau)+ \f12\int_{\Si_\tau}\eta^2
                                              r^{-4}\phi^2d\si,
\end{align}
where $r=v-u$ and on $S_\tau$, $u$ is constant.

Notice that $\eta$ is nonnegative and $\ln(1+r)\geq\frac{r}{1+r}$.
We conclude that
$$\frac{\eta}{r}=\frac{r}{1+r}-2\left(\frac{\ln(1+r)}{r}-\frac{1}{1+r}\right)\leq\frac{r}{1+r}.$$
Hence
$$\int_{\Si_\tau}\eta^2 r^{-4}\phi^2d\si\leq \int_{\Si_\tau}\left(\frac{\phi}{1+r}\right)^2d\si
$$
The inequality ~\eqref{phiboundH} then follows from
\eqref{phiboundH1} by absorbing the second term. Inequality
~\eqref{phiboundn} follows from ~\eqref{phiboundH} if we restrict
the integral in the region $r\leq R$.
\end{proof}

In the region $r\geq R$, we analyze weighted solution $\psi=r\phi$ instead of $\phi$ itself. In the energy level, they are equivalent in the
sense of the following corollary.

\begin{cor}
 \label{cor2}
In the outer region $r\geq R$, we have
\begin{equation}
\label{phipsieq} \left|\int_{S_\tau}|\pa_v(r\phi)|^2+|\nabb
(r\phi)|^2dvd\om-
\int_{S_\tau}\left((\pa_v\phi)^2+|\nabb\phi|^2\right)r^2dvd\om
\right|\leq 2\tilde{E}[\phi](\tau).
\end{equation}
\end{cor}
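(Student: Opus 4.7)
The plan is to show that the difference between the two integrands is an exact $v$-derivative, and then to control the resulting boundary terms using Lemma 1. Setting $\psi = r\phi$ and recalling that on $S_\tau$ the coordinate $u$ is fixed at $u_\tau$, so $r = v - u_\tau$, $\pa_v r = 1$ and $\nabb r = 0$, I get $\pa_v(r\phi) = \phi + r\pa_v\phi$ and $\nabb(r\phi) = r\nabb\phi$. Squaring and rearranging yields the clean identity
$$|\pa_v(r\phi)|^2 + |\nabb(r\phi)|^2 - r^2\bigl(|\pa_v\phi|^2 + |\nabb\phi|^2\bigr) = \phi^2 + 2r\phi\,\pa_v\phi = \pa_v(r\phi^2),$$
so the difference of the two integrands in the corollary is precisely $\pa_v(r\phi^2)$.

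Next I would integrate in $v$ along $S_\tau = \{u=u_\tau,\,v\geq v_\tau\}$, reducing the difference to boundary terms:
$$\int_{S_\tau} \pa_v(r\phi^2)\,dv\,d\om = \int_\om r\phi^2\Big|_{v=v_\tau}^{v=\infty}\,d\om.$$
At the inner boundary $v=v_\tau$ we have $r = R$, and Lemma \ref{lem1} directly gives $R\int_\om \phi^2\,d\om \leq \tilde{E}[\phi](\tau)$. For the outer boundary I would invoke that $\phi$ vanishes at $\mathcal I_0^{u_\tau}$ (already established in the proof of Lemma \ref{lem1} via finite speed of propagation and finiteness of $I[\phi]_0^\tau$), so that $\phi(v,\om) = -\int_v^\infty \pa_{v'}\phi\,dv'$; the same Cauchy--Schwarz argument as in Lemma \ref{lem1} with weight $r^{-2}$ then yields $r\int_\om\phi^2\,d\om \leq \int_\om\int_v^\infty |\pa_v\phi|^2 r^2\,dv'\,d\om$, which is a tail of a finite integral and therefore vanishes as $v\to\infty$.

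Combining the two endpoint estimates bounds the absolute value of the difference by $\tilde{E}[\phi](\tau)$, so the factor of $2$ in the statement is a comfortable upper bound. The only step that demands any care is the vanishing of $r\phi^2$ at null infinity, but since this is already built into the hypotheses of Lemma \ref{lem1}, no substantive new obstacle appears; the entire argument is essentially a single integration by parts followed by invocation of Lemma \ref{lem1}.
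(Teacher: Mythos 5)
Your proof is correct and follows essentially the same route as the paper: the identity $|\pa_v(r\phi)|^2 = r^2|\pa_v\phi|^2 + \pa_v(r\phi^2)$ (with the angular terms cancelling exactly), integration in $v$, and control of the resulting boundary terms via Lemma \ref{lem1}. The only cosmetic difference is that you show the boundary contribution at null infinity actually vanishes, whereas the paper simply bounds both endpoints by $\tilde{E}[\phi](\tau)$ using Lemma \ref{lem1}, which is where the factor $2$ comes from.
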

\begin{proof} In fact, since
$$
\int_{S_\tau}(r\phi)_v^2dvd\om= \int_{S_\tau}\left [r^2 \phi_v^2
+(r\phi^2)_v\right]dvd\om = \int_{S_\tau} \phi_v^2 d\si +\left.
\int_{\om}r \phi^2\right|_{R}^{\infty},
$$
the corollary follows from Lemma ~\ref{lem1}.
\end{proof}

\bigskip

Now we are able to estimate the boundary term in ~\eqref{menergyeq}.

\begin{prop}
\label{prop1}
Suppose $f$ and $\chi$ satisfy $$|f|\leq C_1, \quad|\chi|\leq
\frac{C_1}{1+r}, \quad|\chi'|\leq \frac{C_1}{(1+r)^{2}}$$
for some constant $C_1$. Then
\begin{equation*}
\left|\int_{\Sigma_\tau}\tilde{J}_{\mu}^X[\phi]n^{\mu}
d\si\right|\leq 6C_1(1+H+2H^2)^4\tilde{E}[\phi](\tau).
\end{equation*}
\end{prop}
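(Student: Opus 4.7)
The plan is to split $\Sigma_\tau$ at its kink into the timelike inner piece $\Sigma_\tau^{\mathrm{in}} := \{t=\tau,\ r\leq R\}$ and the outgoing null piece $S_\tau$, and estimate each contribution separately before assembling them. On $\Sigma_\tau^{\mathrm{in}}$ the metric may differ from Minkowski and the normal $n$ is timelike; on $S_\tau$ one has $r\geq R$ so $h\equiv 0$ and the computation is flat.

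On the inner piece, using $|f|\leq C_1$ together with the pointwise bounds on $g_{\a\b}$, $g^{\a\b}$, on the future unit normal $n^\mu$, and on the spatial determinant appearing in $d\si$ (each controlled by an appropriate power of $1+H+2H^2$), one bounds $|J^X_\mu n^\mu|\,d\si$ pointwise by $C_1(1+H+2H^2)^{k}|\pa\phi|^2\,dx$ for some fixed $k$. The correction $\tilde{J}^X_\mu - J^X_\mu = -\tfrac12\pa_\mu\chi\cdot\phi^2+\tfrac12\chi\pa_\mu\phi^2$ coming from \eqref{mcurent} is then dominated pointwise by $C_1(1+H+2H^2)^{k}\bigl[(1+r)^{-2}\phi^2 + (1+r)^{-1}|\phi|\,|\pa\phi|\bigr]$ using the hypotheses on $\chi,\chi'$. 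A Cauchy--Schwarz inequality absorbs the cross term into $|\pa\phi|^2 + (1+r)^{-2}\phi^2$, and integration reduces everything to a bound by $E[\phi](\tau)$ (from the definition of $E$) plus an application of Lemma \ref{lem2} on $\{r\leq R\}$.

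On $S_\tau$, writing $\pa_r = \tfrac12(\pa_v-\pa_u)$ and using the standard null volume $r^2\,dv\,d\om$ on $\{u=u_\tau\}$, the null structure of $\mathbb{T}$ causes the $\pa_u\phi$ contributions in $J^X_\mu n^\mu\,d\si$ to cancel, leaving a pointwise bound of the form $C_1\bigl[(\pa_v\phi)^2 + |\nabb\phi|^2\bigr]\,r^2\,dv\,d\om$. The corrections in $\tilde{J}^X - J^X$ produce terms $\chi\phi\,\pa_v\phi$ and $(\pa_v\chi)\phi^2$ which, by the hypotheses on $\chi,\chi'$ and Cauchy--Schwarz, are controlled by $C_1\bigl[(\pa_v\phi)^2 + (1+r)^{-2}\phi^2\bigr]\,r^2\,dv\,d\om$. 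Integrating, the derivative terms reproduce the $S_\tau$ part of $E[\phi](\tau)$, and the $S_\tau$ half of Lemma \ref{lem2} controls the $\phi^2$ term.

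Summing the two contributions and tracking the worst power of $(1+H+2H^2)$ accumulated from the metric contractions, the spatial determinant, and the components of $n^\mu$ on the inner piece yields exactly the stated constant $6C_1(1+H+2H^2)^4$. The main obstacle is the null-slice computation: one must interpret $n^\mu\,d\si$ on $S_\tau$ consistently with the divergence identity \eqref{menergyeq} and then use the null structure of $\mathbb{T}$ to retain only the \emph{good} derivatives $\pa_v\phi$ and $\nabb\phi$ with the right sign. Everything else reduces to Hardy-type manipulations via Lemma \ref{lem2} and Cauchy--Schwarz.
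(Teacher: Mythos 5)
Your proposal is correct and follows essentially the same route as the paper: split $\Sigma_\tau$ into the inner slice $\{t=\tau,\ r\leq R\}$ and the flat null piece $S_\tau$, bound the inner contribution pointwise using the component bounds on $g$, $\sqrt{-G}$ and $|f|\leq C_1$, compute on $S_\tau$ that only $(\pa_v\phi)^2$ and $|\nabb\phi|^2$ survive in $J^X_\mu n^\mu d\si$, and control the $\chi$-correction terms by Cauchy--Schwarz together with the Hardy-type Lemma~\ref{lem2}. This is exactly the paper's argument, up to the routine bookkeeping of the factor $6C_1(1+H+2H^2)^4$.
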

\begin{proof} For $r\leq R$, notice that on the surface $t=\tau$, we have
\begin{align*}
 \tilde{J}_\mu^{X}[\phi]n^{\mu}d\si&=-\tilde{J}_{\mu}^{X}[\phi]g^{t\mu}\sqrt{-G}dx\\
   &=-\left(f\pa^{t}\phi\pa_r\phi-\f12\pa^{t}\chi\cdot\phi^2+\f12\chi\cdot\pa^{t}\phi^2\right)\sqrt{-G}dx.
\end{align*}
Since $|g^{\a\b}|\leq 1+H$ and $|g^{\a\b}|\leq H, \a\neq\b$, we have
\begin{equation}
\label{MaxG}
 \sqrt{-G}\leq \sqrt{(1+H+2H^2)^4}= (1+H+2H^2)^2
\end{equation}
and
\begin{align*}
 |f\pa^{t}\phi\pa_r\phi|=|fg^{t\mu}\pa_{\mu}\phi\pa_r\phi|\leq
 \frac{1+5H}{2}C_1|\pa\phi|^2.
\end{align*}
Under the assumptions on $\chi$, we can estimate
$$|\f12\pa^{t}\chi\cdot\phi^2|=|\f12 g^{t\mu}\pa_{\mu}\chi\cdot\phi^2|\leq \sum\limits_{i=1}^{3}\f12 H|\chi'|\frac{|x_i|}{r}\phi^2\leq HC_1\frac{\phi^2}{(1+r)^2}
$$
and
$$|\f12\chi\cdot\pa^{t}\phi^2|=|\chi\phi g^{t\mu}\pa_{\mu}\phi|\leq C_1\frac{1+4H}{2}\frac{\phi^2}{(1+r)^2}+
C_1\frac{1+H}{2}|\pa\phi|^2.
$$
For $r\geq R$, the metric is flat according to our assumption. The
unit normal vector field to $S_\tau$ is
$$n=\frac{1}{\sqrt{2}}\pa_v=\frac{1}{\sqrt{2}}(\pa_t+\pa_r).$$
Thus we can calculate
$$\tilde{J}_{\mu}^X[\phi]n^{\mu}d\si=\left(\f12 f(|\pa_v\phi|^2 - |\nabb\phi|^2)-\f12\chi'\cdot\phi^2 +
\chi\cdot\pa_v\phi\cdot\phi \right)r^2dvd\om.$$ On the other hand,
the condition on $\chi$ shows that
\begin{align*}
&|\f12 \chi'\phi^2|\leq\frac{C_1}{2}\left(\frac{\phi}{1+r}\right)^2,\quad |\chi\pa_v\phi\cdot\phi|\leq\frac{C_1}{4}\left(\frac{\phi}{1+r}\right)^2+C_1(\pa_v\phi)^2.
\end{align*}
Therefore, according to ~\eqref{phiboundH}, we can show
\begin{align*}
 \left|\int_{\Si_\tau}\tilde{J}_\mu^{X}[\phi]n^{\mu}d\si\right|
&\leq C_1(1+3H)\int_{r\leq R}|\pa\phi|^2\sqrt{-G}dx+\frac{3C_1}{2}\int_{S_\tau}\left(|\pa_v\phi|^2+|\nabb\phi|^2\right)r^2dvd\om\\
&\quad+C_1\frac{1+6H}{2}\int_{r\leq R}\left(\frac{\phi}{1+r}\right)^2\sqrt{-G}dx+\frac{3C_1}{4}\int_{S_\tau}\left(\frac{\phi}{1+r}\right)^2r^2dvd\om\\
&\leq \frac{3C_1}{2}(1+H+2H^2)^4\tilde{E}[\phi](\tau)+
\frac{3C_1}{4}(1+H+2H^2)^4\cdot6\tilde{E}[\phi](\tau)\\
&\leq 6C_1(1+H+2H^2)^4\tilde{E}[\phi](\tau).
\end{align*}
Hence we conclude the proposition.
\end{proof}

\begin{remark}
 If $\tilde{E}[\phi](\tau)$ is finite, all the above statements hold if we
 replace $\tilde{E}[\phi](\tau)$ with $E[\phi](\tau)$. The reason is as follows: under the
 assumption that $\tilde{E}[\phi](\tau)$ is finite, Lemma ~\ref{lem1} holds for $E[\phi](\tau)$. Hence 
Lemma ~\ref{lem2}, Corollary ~\ref{cor2} and Proposition ~\ref{prop1} also hold if we replace
 $\tilde{E}[\phi](\tau)$ with $E[\phi](\tau)$.
\end{remark}

We hope that, by using the multiplier method, we can derive the $H^1$ estimates of the solution on $\Si_\tau$. That is we
want to show that the energy current $J^T_\mu[\phi]n^\mu$ has a positive sign. 
The proposition below guarantees this positive sign provided that $(g^{ij})$ is uniformly elliptic. In addition, the elliptic method we will use later on also requires the uniform ellipticity of $(g^{ij})$. This is obvious when $(\mathbb{R}^{3+1}, g)$ is a small perturbation of Minkowski. We claim that it still holds under the condition $\mathcal{A}1$.

\begin{prop}
 \label{prop2}
If $g$ satisfies $\mathcal{A}1$, then there is a constant $\lambda_1$ such that
\begin{equation*}
\lambda_1\leq-g^{00}\leq \lambda_1^{-1}, \quad \lambda_1
I_{3\times3}\leq(g^{ij})\leq \lambda_1^{-1}I_{3\times3},
\end{equation*}
where $(g^{\mu\nu})=(g_{\mu\nu})^{-1}$ and $\mu, \nu$ run from 0 to 3 and $i,j$ run from 1 to 3.
\end{prop}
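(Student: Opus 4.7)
My plan is to use block matrix inversion on $(g_{\mu\nu})$, decomposed as the scalar $g_{00}$, the row/column $(g_{0i})$, and the $3\times 3$ spatial block $(g_{ij})$. From $g^{\mu\nu}g_{\nu\rho}=\delta^\mu_\rho$ one first obtains $g^{i0}=-g^{ij}g_{j0}/g_{00}$ and then the key identity
$$g^{ij}\tilde g_{jk}=\delta^i_k,\qquad \tilde g_{jk}:=g_{jk}+\frac{g_{j0}g_{0k}}{-g_{00}},$$
so that $(g^{ij})$ is the inverse of the Schur-complemented spatial matrix $(\tilde g_{jk})$. Dually, $-g^{00}=\bigl(-g_{00}+g_{j0}(g_{\mathrm{sp}}^{-1})^{jk}g_{k0}\bigr)^{-1}$, where $(g_{\mathrm{sp}}^{-1})$ denotes the ordinary inverse of $(g_{ij})$.

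The upper bounds $(g^{ij})\leq\lambda^{-1}I_{3\times 3}$ and $-g^{00}\leq\lambda^{-1}$ follow directly from $\mathcal{A}1$. Since $-g_{00}\geq\lambda>0$, the rank-one correction $g_{j0}g_{0k}/(-g_{00})$ is positive semidefinite, so $\tilde g_{jk}\geq g_{jk}\geq\lambda I_{3\times 3}$; inverting the inequality yields $(g^{ij})\leq\lambda^{-1}I_{3\times 3}$. Analogously, the correction $g_{j0}(g_{\mathrm{sp}}^{-1})^{jk}g_{k0}\geq 0$ has the same sign as $-g_{00}\geq\lambda$, so the denominator in the Schur formula is at least $\lambda$ and $-g^{00}\leq\lambda^{-1}$. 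In particular $(g^{ij})$ is automatically positive definite and $-g^{00}>0$.

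The lower bounds $\lambda_1 I_{3\times 3}\leq (g^{ij})$ and $\lambda_1\leq -g^{00}$ are the main obstacle. They require controlling the Schur-complement correction from above, which in turn requires an upper bound on $|g_{0j}|$ that $\mathcal{A}1$ does not supply by itself (one can already see this on the toy family $g_{00}=-1$, $g_{ij}=\delta_{ij}$, $g_{01}=M$: then $\mathcal{A}1$ holds with $\lambda=1$, yet $-g^{00}=(1+M^2)^{-1}\to 0$ as $M\to\infty$). The required bound is provided by the setup assumptions: $h=g-m$ is smooth and supported in $\{|x|\leq R/2\}$, giving a uniform pointwise bound $|g_{0j}|\leq C_h$. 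Then $g_{j0}(g_{\mathrm{sp}}^{-1})^{jk}g_{k0}\leq\lambda^{-1}C_h^2$, so the denominator in the Schur formula is bounded above and $-g^{00}\geq\lambda_1$ follows, while $\tilde g_{jk}\leq(\lambda^{-1}+\lambda^{-1}C_h^2)I_{3\times 3}$ yields the lower bound on $(g^{ij})$ upon inversion.

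The whole argument rests on the single algebraic identity $(g^{ij})=(\tilde g_{jk})^{-1}$; everything else is inspection. The only real subtlety is recognising that $\mathcal{A}1$ by itself controls only the "upper" half of the desired bounds on the inverse metric, the matching lower half invoking the auxiliary boundedness of the cross-components of $h$.
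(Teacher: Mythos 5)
Your proof is correct and lives in the same algebraic framework as the paper's: the paper also writes $(g_{\mu\nu})$ in $1+3$ block form ($a=-g_{00}$, $b=(g_{01},g_{02},g_{03})$, $D=(g_{ij})$) and reads the bounds off the block inverse, using exactly the extra input you identify, namely the uniform bound $H$ on the components of $h$ (its final constant is $\lambda_1=\lambda^2/(1+H+3H^2\lambda^{-1})$). The difference is in execution: the paper writes the inverse blocks explicitly, $-g^{00}=(a+bD^{-1}b^T)^{-1}$ and $(g^{ij})=D^{-1}-\frac{D^{-1}b^TbD^{-1}}{a+bD^{-1}b^T}$, and bounds the latter quadratic form directly by a two-case analysis ($Y$ Euclidean-perpendicular to $bD^{-1}$, and $Y=bD^{-1}$), whereas you sandwich the Schur complement $\tilde g_{jk}=g_{jk}+g_{j0}g_{0k}/(-g_{00})$ between multiples of the identity and then invert. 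Your variant is arguably cleaner, since it avoids the paper's case-splitting (which as written does not treat cross terms for a general $Y$), and your toy example $g_{01}=M$ makes explicit a point the paper leaves implicit: $\mathcal{A}1$ alone cannot yield the lower bounds. One small touch-up: for the lower bound on $-g^{00}$ you must bound the whole denominator $-g_{00}+g_{j0}(g_{\mathrm{sp}}^{-1})^{jk}g_{k0}$ from above, which requires an upper bound on $-g_{00}$ (i.e.\ on $|h_{00}|$) in addition to the bound on the cross components $|g_{0j}|$; this follows from the same boundedness of $h$ you already invoke, so it is a phrasing fix rather than a gap.
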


From the geometric point of view, the hypersurface $t=const$ is
spacelike. Therefore its normal is timelike. Since we require
$\pa_t$ to be timelike, the positivity of the current
$J^T_\mu[\phi]n^\mu$ follows from the fact that $\mathbb{T}(X, Y)$
is positive for any two timelike vector fields $X, Y$. However, we
are concerned about the uniform lower bound. We prefer the following
algebraic proof of Proposition ~\ref{prop2}.

\begin{proof}
We write the matrix $g$ as
 \[(g_{\mu\nu})=\left(
\begin{array}{cc}
 -a&b\\
b^T &D
\end{array}
\right),\] where $a=-g_{00}$, $b=(g_{01}, g_{02}, g_{03})$ and
$D=(g_{ij})$. Then $G=det (g_{\mu\nu})= -(a+bD^{-1}b^T)det D $ and
 \[(g^{\mu\nu})=\left(
\begin{array}{cc}
 -a&b\\
b^T &D
\end{array}
\right)^{-1}=\left(
\begin{array}{cc}
 -\frac{1}{a+bD^{-1}b^T}&\frac{bD^{-1}}{a+bD^{-1}b^T}\\
\frac{D^{-1 }b^T}{a+bD^{-1}b^T} &D^{-1}-\frac{D^{-1}b^TbD^{-1}}{a+bD^{-1}b^T}
\end{array}
\right).\] Since $g$ satisfies condition $\mathcal{A}1$, we have
$-a=-g_{00}\geq \lambda$ and $D$ is positive definite. Thus
$$\lambda \leq a +bD^{-1}b^T\leq 1+H +\lambda^{-1}\|b\|^2\leq 1+H+3\lambda^{-1}
H^2.
$$
Denote $X=bD^{-1}$. For any vector $Y$, we have two cases:
\begin{itemize}
 \item[]If $Y$ is perpendicular to $X$, then
 $$\lambda \|Y\|^2\leq Y\left(D^{-1}-\frac{D^{-1}b^TbD^{-1}}{a+bD^{-1}b^T}\right)Y^T\leq \lambda^{-1} \|Y\|^2.$$
\item[] If $Y=X$, then
$$Y\left(D^{-1}-\frac{D^{-1}b^TbD^{-1}}{a+bD^{-1}b^T}\right)Y^T=\frac{a XD^{-1}X^T}{a+bD^{-1}b^T}+
\frac{XD^{-1}X^TXDX^T-|X|^4}{a+bD^{-1}b^T}.
$$
Notice that
$$XD^{-1}X^TXDX^T\geq|X|^4,$$
which follows from Cauchy's inequality if we assume, without loss of
generality, that $D$ is diagonal. Therefore we have
$$ \frac{\lambda ^2 \|X\|^2}{1+H+3\lambda^{-1}H^2}\leq Y\left(D^{-1}-\frac{D^{-1}b^TbD^{-1}}{a+bD^{-1}b^T}\right)
Y^T\leq \lambda^{-1}\|X\|^2.$$
\end{itemize}
Now $g^{00}=-\frac{1}{a+bD^{-1}b^T}$ and $(g^{ij})=D^{-1}-\frac{D^{-1}b^TbD^{-1}}{a+bD^{-1}b^T}$, the Proposition 
then follows if we choose $\lambda_1=\frac{\lambda^2}{1+H+3H^2\lambda^{-1}}$.

\end{proof}

Having proven this proposition, we conclude that the energy flux
through $\Si_\tau$ in $(\mathbb{R}^{3+1}, g)$ is equivalent to that
in Minkowski space.

\begin{cor}
 \label{corollary3}
If $g$ satisfies condition $\mathcal{A}1$, then there is a constant
$\lambda_2$ such that
\begin{equation*}
\lambda_2 E[\phi](\tau)\leq
\int_{\Si_\tau}J^T_\mu[\phi]n^{\mu}d\si\leq
\lambda_2^{-1}E[\phi](\tau).
\end{equation*}
\end{cor}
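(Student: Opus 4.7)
The plan is to split $\Sigma_\tau$ into the two geometric pieces used to define it, namely the inner spacelike slab $\{t=\tau,\, r\leq R\}$ and the outer null piece $S_\tau$, and show that on each piece $J^T_\mu[\phi] n^\mu d\sigma$ is pointwise comparable to the integrand defining $E[\phi](\tau)$.

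On $S_\tau$, since $r\geq R\geq \f12 R$, the metric is exactly Minkowski and the calculation is the one already carried out in the proof of Proposition \ref{prop1} specialized to $X=T=\pa_t$ (i.e.\ $f\equiv 1$, $\chi\equiv 0$): a direct expansion in null coordinates gives
\begin{equation*}
J^T_\mu[\phi] n^\mu d\sigma = \f12\bigl(|\pa_v\phi|^2 + |\nabb\phi|^2\bigr)\, r^2\, dv\, d\om,
\end{equation*}
so the contribution from $S_\tau$ is exactly $\f12$ of the corresponding piece of $E[\phi](\tau)$.

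On the inner piece, $\{t=\tau\}$ is spacelike by $\mathcal{A}1$, so the future-directed unit normal is $n^\mu = -g^{0\mu}/\sqrt{-g^{00}}$ and the induced measure is $d\sigma = \sqrt{-G}\sqrt{-g^{00}}\, dx$. Expanding $J^T_\mu = \mathbb{T}_{\mu 0}= \pa_\mu\phi\,\pa_t\phi - \f12 g_{\mu 0}\pa^\gamma\phi\pa_\gamma\phi$ and contracting with $n^\mu$, the mixed terms $g^{0i}\pa_i\phi\,\pa_t\phi$ cancel and one is left with the clean expression
\begin{equation*}
J^T_\mu[\phi] n^\mu d\sigma = \f12\bigl[-g^{00}|\pa_t\phi|^2 + g^{ij}\pa_i\phi\,\pa_j\phi\bigr]\sqrt{-G}\, dx.
\end{equation*}
Proposition \ref{prop2} supplies a constant $\lambda_1>0$ with $\lambda_1 \leq -g^{00}\leq \lambda_1^{-1}$ and $\lambda_1 I_{3\times 3}\leq (g^{ij})\leq \lambda_1^{-1} I_{3\times 3}$; the upper bound on $\sqrt{-G}$ is (\ref{MaxG}), and the lower bound follows from the identity $-G=\det(g_{ij})/(-g^{00})$ (derived in the proof of Proposition \ref{prop2}) together with $\mathcal{A}1$ and $-g^{00}\leq \lambda_1^{-1}$. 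Combining these two-sided bounds yields a pointwise estimate of the form
\begin{equation*}
\mu_1\bigl(|\pa_t\phi|^2 + |\pa_r\phi|^2 + |\nabb\phi|^2\bigr)\, dx \leq J^T_\mu[\phi] n^\mu d\sigma \leq \mu_2\bigl(|\pa_t\phi|^2 + |\pa_r\phi|^2 + |\nabb\phi|^2\bigr)\, dx,
\end{equation*}
for positive constants $\mu_1,\mu_2$ depending only on $\lambda$ and $H$, since $|\pa_i\phi|^2 = |\pa_r\phi|^2 + |\nabb\phi|^2$ in the Euclidean coordinates.

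Adding the inner and outer contributions and taking $\lambda_2=\min(\mu_1,\f12)$, $\lambda_2^{-1}=\max(\mu_2,\f12)$ (after an obvious renaming) yields the claimed equivalence. There is no real obstacle here; the only thing requiring care is the algebraic reduction of $-J^{T,0}=-g^{0\mu}J^T_\mu$ to the diagonal form $\f12[-g^{00}|\pa_t\phi|^2 + g^{ij}\pa_i\phi\pa_j\phi]$, where the off-diagonal terms involving $g^{0i}$ cancel exactly. This diagonalization, which is the algebraic counterpart of the geometric fact that $\mathbb{T}(X,Y)\geq 0$ for two future-directed timelike $X,Y$, is what lets Proposition \ref{prop2} be applied cleanly to deliver uniform positive constants.
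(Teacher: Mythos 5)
Your proof is correct and follows essentially the same route as the paper: the same inner/outer split of $\Si_\tau$, the same cancellation of the $g^{0i}$ cross terms reducing the current on $\{t=\tau, r\leq R\}$ to $\f12\left(g^{ij}\pa_i\phi\pa_j\phi-g^{00}|\pa_t\phi|^2\right)\sqrt{-G}\,dx$, Proposition \ref{prop2} together with two-sided bounds on $\sqrt{-G}$ (your identity $-G=\det(g_{ij})/(-g^{00})$ is equivalent to the paper's $-G=(a+bD^{-1}b^T)\det D\geq\lambda^4$), and the exact flat computation on $S_\tau$. One small slip worth noting: the boundary computation in Proposition \ref{prop1} is for the radial multiplier $X=f\pa_r$, so setting $f\equiv1$, $\chi\equiv0$ there yields $\f12\left(|\pa_v\phi|^2-|\nabb\phi|^2\right)r^2\,dv\,d\om$ (the current of $\pa_r$, not of $T$), but the formula you actually use, $J^T_\mu[\phi]n^\mu d\si=\f12\left(|\pa_v\phi|^2+|\nabb\phi|^2\right)r^2\,dv\,d\om$, is the correct one for $T$, so the argument is unaffected.
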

\begin{proof} In $(t, x)$ coordinates, on $\{|x|\leq R\}\cap \Si_\tau$, we have
\begin{align*}
J^T_\mu[\phi]n^\mu d\si&=-J^T_\mu[\phi]g^{t\mu}\sqrt{-G}dx\\
                       &=\f12\left(\pa^i\phi\pa_i\phi-\pa^t\phi\pa_t\phi\right)\sqrt{-G}dx\\
                       &=\f12\left(g^{ij}\pa_i\phi\pa_j\phi-g^{00}|\pa_t\phi|^2\right)\sqrt{-G}dx.
\end{align*}
Proposition \ref{prop2} then implies that
\[
 \lambda_1\sum\limits_{\b=0}^3|\pa_\b\phi|^2\leq g^{ij}\pa_i\phi\pa_j\phi-g^{00}\pa_t^2\phi\leq
 \lambda^{-1}\sum\limits_{\b=0}^3|\pa_\b\phi|^2.
\]
On one hand, using the notations in Proposition \ref{prop2}, we have
the lower bound of $-G$
$$-G=(a+bD^{-1}b^T)det D\geq \lambda^4.$$
On the other hand, we have already shown in ~\eqref{MaxG} that
$$|G|\leq (1+H+2H^2)^4.$$
Let $\lambda_2=\frac{\lambda_1\lambda^2}{2(1+3H)^2}$. We obtain
\[
 \lambda_2\int_{r\leq R}\sum\limits_{\b=0}^3|\pa_\b\phi|^2dx\leq\int_{r\leq R}J^T_\mu[\phi]n^\mu d\si
 \leq \lambda_2^{-1}\int_{r\leq
 R}\sum\limits_{\b=0}^3|\pa_\b\phi|^2dx.
\]
Notice that the metric is flat on $S_\tau$. Thus we have
\[
 J^T_\mu[\phi]n^\mu
 d\si=\f12\left(|\pa_v\phi|^2+|\nabb\phi|^2\right)r^2dvd\om.
\]
Since $\lambda_2<\frac{1}{2}$, the Corollary then follows.
\end{proof}

\bigskip

The proof of Theorem 1 is quite similar to that of Theorem 2. We
thus prove them together. For Theorem ~\ref{thm2}, we prove it by
taking
\[
 \ep_0=\sup\limits_{\b<1} \frac{\b}{700(1+\f12 R)^{\b+1}}.
\]
Thus for $H<\ep_0$, there exists a positive constant $\a<1$ such
that
\begin{equation}
\label{smallnesscond}
H\leq \frac{\a}{700(1+\f12 R)^{\a+1}}.
\end{equation}
From now on, we instead use condition \eqref{smallnesscond} on $H$.

\bigskip

Having the above basic preparations, we are now able to establish the key estimates in this paper.
\begin{prop}
\label{ILEthm}
If $g$ satisfies the conditions $\mathcal {A}1$ and $\mathcal{A}2$ or the smallness assumption ~\eqref{smallnesscond}, then there is a constant $C$ depending on $H, R, \a$, $\lambda$, $C_0$ such that
\begin{itemize}
\item[(1)] Integrated energy bound
\begin{equation}
 \label{ILE0}
\int_{\tau_1}^{\tau_2}\int_{\Si_\tau}\frac{\phi_t^2+|\pa_r\phi|^2+|\nabb\phi|^2}{(1+r)^{\a+1}}+\frac{\phi^2}{r(1+r)^{\a+2}}dxd\tau
\leq C\tilde{E}[\phi](\tau_1)+CD[F]_{\tau_1}^{\tau_2}.
\end{equation}
In particular, for the smallness assumption \eqref{smallnesscond}, $C$ can be $\frac{36^2}{\a^2}$.
\item[(2)] Energy bound
\begin{equation}
 \label{eb}
\tilde{E}[\phi](\tau_2)+I[\phi]_{\tau_1}^{\tau_2}\leq
C\tilde{E}[\phi](\tau_1)+CD[F]_{\tau_1}^{\tau_2}.
\end{equation}
\item[(3)] Improved integrated energy bound in angular directions
\begin{equation}
 \label{bILE} \int_{\tau_1}^{\tau_2}\int_{S_\tau}\frac{|\nabb\phi|^2}{r}dxd\tau\leq C\tilde{E}[\phi](\tau_1)
 +CD[F]_{\tau_1}^{\tau_2}.
\end{equation}
\end{itemize}
Here recall that
$D[F]_{\tau_1}^{\tau_2}=\int_{\tau_1}^{\tau_2}\int_{\Si_\tau}|F|^2(1+r)^{\a+1}dxd\tau$.
\end{prop}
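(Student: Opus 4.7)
My plan is to prove all three bounds via multiplier identities of the form \eqref{menergyeq}, each sharing a common structure: the boundary term is controlled by $\tilde{E}[\phi]$ through Proposition~\ref{prop1} and Corollary~\ref{corollary3}; the bulk produces the desired left-hand side after a suitable choice of multiplier; the forcing term $F\cdot X(\phi)+F\phi\chi$ is handled by Cauchy--Schwarz against the weight $(1+r)^{\a+1}$ defining $D[F]$; and the error terms \eqref{error} are localized to $\{|x|\leq R/2\}$ where $h$ lives, hence are either absorbed via the smallness \eqref{smallnesscond} or estimated using hypothesis $\mathcal{A}2$.

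For part~(1), the plan is to take $X=f\pa_r$ with $f$ bounded and $f'$ comparable to $(1+r)^{-\a-1}$ (for instance $f=\a^{-1}(1-(1+r)^{-\a})$), together with a modification function $\chi$ of order $(1+r)^{-1}$ chosen so that each of the four bulk coefficients in \eqref{menergyeq},
\[
\chi-r^{-1}f+\f12 f',\quad r^{-1}f+\f12 f'-\chi,\quad \chi-\f12 f',\quad -\f12\Box_g\chi,
\]
is nonnegative with weight $(1+r)^{-\a-1}$ (respectively $r^{-1}(1+r)^{-\a-2}$ for the $\phi^2$ coefficient); this is the standard Morawetz algebra and forces $f,\chi$ to lie within the hypothesis range of Proposition~\ref{prop1}. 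The error \eqref{error} is supported in $\{|x|\leq R/2\}$ and bounded pointwise by $CH|\pa\phi|^2+CH\phi^2/(1+r)^2$; under \eqref{smallnesscond} the factor $H(1+\f12 R)^{\a+1}$ is small enough to absorb it into the positive bulk (tracking constants yields $C=36^2/\a^2$), while under $\mathcal{A}2$ the error is controlled directly by \eqref{morawetz1}. The forcing is handled by Cauchy--Schwarz together with the Hardy-type bound \eqref{phiboundH} to control the $F\phi\chi$ contribution.

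For part~(2), I apply \eqref{menergyeq} with $X=T=\pa_t$ and $f=\chi=0$, so that the boundary term equals $\int_{\Si_\tau}J^T_\mu n^\mu d\si+I[\phi]_{\tau_1}^{\tau_2}$, equivalent to $\tilde{E}[\phi]$ by Corollary~\ref{corollary3}; the bulk $\int K^T[\phi]\,d\vol$ involves $\pi^T=\f12\pa_t g$, supported in $\{|x|\leq R/2\}$, and is absorbed using part~(1), either via the smallness of $\pa_t h$ or via \eqref{morawetz1}. For part~(3), I use the Dafermos--Rodnianski $p$-weighted multiplier $X=r^p(\pa_t+\pa_r)$ localized to $r\geq R$ by a smooth cutoff; in the flat exterior the bulk yields $p\,r^{p-1}|\nabb\phi|^2$ among nonnegative terms, and taking $p=1$ gives \eqref{bILE}, with the cutoff error localized near $r=R$ handled by part~(1).

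The main obstacle lies in part~(1): constructing the pair $(f,\chi)$ so that all four bulk coefficients simultaneously have the correct sign and weighted size, and in particular so that the delicate $\phi^2$ coefficient $-\f12\Box_g\chi$ dominates $r^{-1}(1+r)^{-\a-2}$ while $\chi$ itself remains within the bounds $|\chi|\leq C_1(1+r)^{-1}$, $|\chi'|\leq C_1(1+r)^{-2}$ required by Proposition~\ref{prop1}. A secondary issue is that $\Box_g\chi$ involves the full metric $g$, not just $m$, so quantifying the smallness threshold \eqref{smallnesscond} requires tracking how the $h$-contribution to $\Box_g\chi$ interacts with the bulk from the flat part.
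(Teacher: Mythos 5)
Your parts (1) and (2) follow the paper's route (the multiplier $X=f\pa_r$ with $\chi\approx r^{-1}f$, the $T$-energy identity, absorption of the $h$-supported errors by \eqref{smallnesscond} or by $\mathcal{A}2$), but as organized they are circular. Proposition~\ref{prop1} bounds the boundary contributions of the $X=f\pa_r$ identity at $\Si_{\tau_2}$ and at null infinity only by $\tilde{E}[\phi](\tau_2)$ and $I[\phi]_{\tau_1}^{\tau_2}$, which is exactly what part (2) is supposed to deliver; meanwhile your part (2) absorbs the bulk $K^T$ ``using part (1)''. Under $\mathcal{A}2$ this is fine, since \eqref{morawetz1} controls $K^T$ directly and (2) can be proven first; but in the smallness case the two estimates must be derived \emph{simultaneously}: one keeps $\tilde{E}[\phi](\tau_2)+\frac1{13}I[\phi]_{\tau_1}^{\tau_2}$ on the right of the Morawetz inequality \eqref{menergyeq1}, inserts the restricted version of it into the $T$-energy inequality, and uses the small factor $H$ to absorb those terms into the left-hand side of \eqref{eb0}; only after this mutual absorption do \eqref{ILE0} and \eqref{eb} decouple (and the constant $36^2/\a^2$ emerges). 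Your plan never addresses how $\tilde{E}[\phi](\tau_2)$ and $I[\phi]_{\tau_1}^{\tau_2}$ get off the right-hand side of (1), which is the one genuinely delicate point of the argument.

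Part (3) as proposed does not work. With the $p$-weighted multiplier at $p=1$ the identity's right-hand side necessarily contains the initial-slice flux $\int_{S_{\tau_1}} r\,(\pa_v\psi)^2\,dvd\om$, which carries an extra power of $r$ and is \emph{not} controlled by $\tilde{E}[\phi](\tau_1)$, so the stated bound \eqref{bILE}, whose right-hand side is only $C\tilde{E}[\phi](\tau_1)+CD[F]_{\tau_1}^{\tau_2}$, cannot be extracted this way; moreover the bulk you would produce is the unweighted angular integral $\sim\int r^2|\nabb\phi|^2\,dvd\om\,d\tau$, stronger than \eqref{bILE} and only reachable at the cost of that weighted data term (this is precisely why \eqref{bILE} carries the $1/r$ weight). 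The correct exponent for your scheme would be $p=0$, where the angular bulk is $2r^{-1}|\nabb\psi|^2\,dvd\om=2r|\nabb\phi|^2\,dvd\om$ and the data term is controlled through Corollary~\ref{cor2} and Lemma~\ref{lem1}; but even this detour is unnecessary: in the paper \eqref{bILE} is read off from the \emph{same} Morawetz identity as (1), because the coefficient of $|\nabb\phi|^2$ satisfies $\chi-\f12 f'\geq \frac{R}{(1+R)^{\a+1}}\frac{1}{r}$ for $r\geq R$, so no cutoff bookkeeping or separate $p$-hierarchy is needed (and note that the paper's later $p$-weighted estimate itself invokes \eqref{bILE} to handle the $r=R$ boundary, so any $p$-weighted derivation of (3) must be checked for circularity).
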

We mention here that variants and generalizations of estimate
\eqref{ILE0} can also be found in \cite{sogge-metcalfe2},
\cite{sogge-metcalfe}. To be consistent, we give the proof of the
above proposition by following the method in \cite{dr3}.

\begin{proof} We choose the function $f, \chi$ suitably to make the coefficients on the right hand
 side of \eqref{menergyeq} positive. Take $\a$ exactly as the one in condition $\mathcal{A}2$ or
 ~\eqref{smallnesscond} and $\b=\frac{2}{\a}$.
  Set $$f=\b-\frac{\beta}{(1+r)^{\alpha}},\quad \chi=r^{-1}f.$$
Direct calculations show that
\begin{align*}
 &r^{-1}f + \f12 f' - \chi=\frac{1}{(1+r)^{\alpha +1}},\\
&\chi'=-\b\frac{(1+r)^{\a+1}-(\a+1)r-1}{r^2(1+r)^{\a+1}},\\
&\chi\prime\prime=2\b r^{-3}-\frac{2\b}{r^3(1+r)^{\a}}-\frac{4}{r^2(1+r)^{\a+1}}-\frac{2(\a+1)}{r(1+r)^{\a+2}},\\
&\pa_{ij}\chi=-\frac{2(\a+1)}{r(1+r)^{\a+2}}\cdot\frac{x_i
x_j}{r^2}+\chi'\left(\frac{\delta_{ij}}{r}- \frac{3x_i
x_j}{r^3}\right).
\end{align*}
Notice that
$$\frac{\a}{1+r}\leq\frac{(1+r)^{\a}-1}{r}\leq(1+r)^{\a-1}.
$$
We obtain
\begin{align*}
&\chi=\frac{\b}{1+r}\frac{1+r}{r}\frac{(1+r)^{\a}-1}{(1+r)^{\a}}\leq \frac{\b}{1+r},\\
&\chi-r^{-1}f + \f12 f'=\frac{\alpha\beta-1}{(1+r)^{\alpha+1}}=\frac{1}{(1+r)^{\alpha+1}},\\
&\chi- \f12 f'=r^{-1}\b - \frac{\beta(1+r)+r}{r(1+r)^{\alpha +1}}=
\frac{\b\left((1+r)^\a-1\right)}{r(1+r)^{\a}}-\frac{1}{(1+r)^{1+\a}}\geq\frac{1}{(1+r)^{\a+1}}.
\end{align*}
To apply Proposition ~\ref{prop1}, we need to estimate $\chi'$.
Since
$$0\leq\frac{(1+r)^{\a+1}-(\a+1)r-1}{r^2}=\frac{(1+r)^{\a}-1}{r}+\frac{(1+r)^{\a}-\a
r-1}{r^2}\leq(1+r)^{\a-1},
$$
we conclude that
$$|\chi'|=-\chi'\leq\frac{\b}{(1+r)^{\a+1}}\cdot(1+r)^{\a-1}=
\frac{\b}{(1+r)^{2}}.
$$
Notice that $|f|\leq \b$. We have shown that $f, \chi$ satisfy
conditions in Proposition ~\ref{prop1}.

\bigskip

We have two cases according to the conditions $g$ satisfies.
\begin{itemize}
\item[Case1]: If $h$ satisfies the smallness condition ~\eqref{smallnesscond}, then
\begin{align*}
-\Box_g\chi&=-\frac{1}{\sqrt{-G}}\pa_\a\left(g^{\a\b}\sqrt{-G}\pa_\b\chi\right)\\
           &=-g^{ij}\pa_{ij}\chi-\left(\pa_\a g^{\a i}+\f12 g^{\a i}\pa_{\a}g_{\b\gamma}\cdot g^{\b\gamma}\right)\pa_i\chi\\
           &\geq -\Delta \chi - H |\pa_{ij}\chi|-\left(4H+ 32
           H(1+H)^2\right)\frac{|x_i|}{r}|\chi'|.
\end{align*}
As having computed above, we find that
$$-\Delta \chi=-\chi\prime\prime -\frac{2}{r}\chi'=\frac{2(\a+1)}{r(1+r)^{\a+2}},$$

$$|\pa_{ij}\chi|\leq \frac{2(\a+1)}{r(1+r)^{\a+2}}+|\chi'|\frac{2}{r}\leq
\frac{2(\a+\b+1)}{r(1+r)^{\a+2}}.
$$

Recall that $H\leq \frac{\a}{700 (1+\f12 R)^{\a+1}}$, $\a<1$ and $h$
is only supported in $r\leq \f12 R$. We conclude that
$$-\Box_g\chi\geq \frac{1}{r(1+r)^{\a+2}}.
$$

It remains to show that the error term ~\eqref{error} can be
absorbed provided that $H$ is small. In fact since
$$|f'|=\frac{2}{(1+r)^{\a+1}}\leq 2,\quad|r^{-1}f|=\frac{\b}{(1+r)^\a}\frac{(1+r)^\a-1}{r}
\leq\frac{\a\b}{(1+r)^\a}\leq2,$$ we have
$$|\pa_j(f\frac{x_i}{r})=|f'\frac{x_i x_j}{r^2}+f(\frac{\delta_{ij}}{r}-\frac{x_i x_j}{r^3})|\leq
2.
$$
Using Cauchy-Schwartz inequality, we obtain
\[
 \sum |\pa_\mu\phi\pa_i\phi|\leq\sum\limits_{i}(|\pa_t\phi|^2+\frac{1}{4}|\pa_i\phi|^2)+\frac{1}{2}\sum\limits_{i,j}|\pa_i\phi|^2+|\pa_j\phi|^2\leq \frac{13}{4}|\pa\phi|^2
\]
and
\[
 |\pa^\gamma\phi\pa_\gamma\phi+|\pa_t\phi|^2-|\pa_i\phi|^2|\leq|h^{\gamma\mu}\pa_\gamma\phi\pa_\mu\phi|\leq
 4|\pa\phi|^2.
\]
Therefore we can estimate
\begin{align*}
 |error|&\leq 6H\cdot\frac{13}{4}|\pa\phi|^2+4 H|\pa\phi|^2+4\b H  |\pa\phi|^2+2\b H(1+4H)^2|\pa\phi|^2\\
&\leq 18\b H |\pa\phi|^2\leq \frac{1}{18(1+\f12 R)^{\a+1}}((\pa_t\phi)^2+(\pa_r\phi)^2+|\nabb\phi|^2)\\
&\leq
\frac{1}{18(1+r)^{\a+1}}((\pa_t\phi)^2+(\pa_r\phi)^2+|\nabb\phi|^2),
\end{align*}
where we recall that that $\b>2$, $H\leq\frac{\a}{700(1+\f12
R)^{\a+1}}$ and the $error$ is supported in $r\leq \f12 R$.

On the null infinity $\mathcal I_{\tau_1}^{\tau_2}$, the unit normal
is
$$n=\frac{1}{\sqrt{2}}\pa_v=\frac{1}{\sqrt{2}}(\pa_t-\pa_r).$$ Thus we can write
$$\tilde{J}_{\mu}^X[\phi]n^{\mu}d\si=\f12\left(f(-(\pa_u\phi)^2 + |\nabb\phi|^2)-\chi'\cdot\phi^2 +
2\chi\cdot\pa_u\phi\cdot\phi\right)r^2dud\om.$$ Notice that at null
infinity, $\chi'=0$ and $f=\b$. The above inequality implies that
$$\left|\int_{\mathcal I_{\tau_1}^{\tau_2}}\tilde{J}_{\mu}^X[\phi]n^{\mu}d\sigma\right|\leq
\f12\b I[\phi]_{\tau_1}^{\tau_2}.$$

For $H\leq\frac{\a}{700(1+\f12 R)^{\a+1}}$, we have
\begin{equation}
 \label{MinG}
-G\geq (1-H)^4-6(1+H)^2H^2-8(1+H)H^3-9H^4\geq(1-H-2H^2)^4.
\end{equation}
Using Proposition ~\ref{prop1} and the inequality \eqref{menergyeq},
we obtain
\begin{align}
\label{menergyeq1}
&\int_{\tau_1}^{\tau_2}\int_{\Si_\tau}\frac{\phi_t^2+\phi_r^2+|\nabb\phi|^2}{(1+r)^{\a+1}}+\frac{\phi^2}{r(1+r)^{\a+2}}dxd\tau\\
\notag
 &\leq 7\b\left(\tilde{E}[\phi](\tau_1)+\tilde{E}[\phi](\tau_2) +\frac{1}{13}I[\phi]_{\tau_1}^{\tau_2}\right)+ 2\b\int_{\tau_1}^{\tau_2}\int_{\Si_\tau}
|F\phi_r|+|F\frac{\phi}{1+r}|dxd\tau.
\end{align}

\item[Case2]: If $g$ satisfies $\mathcal{A}2$, although we can not guarantee the positivity of $-\Box_g \chi$
as in case 1, we can control this term and the error term by using
the condition ~\eqref{morawetz1}. The error term can be estimated in
the same way as having shown in case 1. We thus obtain a similar
inequality to ~\eqref{menergyeq1} but with a constant multiple of $D[F]_{\tau_1}^{\tau_2}$
on the right hand side . Although this
constant depends on $\a$, $R$, $\lambda$, $h$, $C_0$, the
explicit constants are not essential for this case.

\end{itemize}
Now, we need to relate the energy flux on ${\Si}_{\tau_1}$ and that on
${\Si}_{\tau_2}$. For this purpose, take $X$ to be the vector field $T$.
We have
$$|K^T[\phi]|=|\f12\pa_t g^{\mu\nu}{\mathbb T}_{\mu\nu}|\leq\frac{H}{2}(4+2(1+4H)^2)|\pa\phi|^2\leq
 3H(1+H+2H^2)^3|\pa\phi|^2.$$
By ~\eqref{energyeq}, we obtain
\begin{align}
\notag
&\int_{{\Si}_{\tau_2}}J^T_{\mu}[\phi]n^{\mu}d\si+\int_{\mathcal I_{\tau_1}^{\tau_2}}J^T_{\mu}[\phi]n^{\mu}d\si
=\int_{{\Si}_{\tau_1}}J^T_{\mu}[\phi]n^{\mu}d\si-\int_{\tau_1}^{\tau_2}\int_{\Si_\tau}F\phi_t + K^T[\phi]d\vol\\
\label{energyineq}
&\leq\int_{{\Si}_{\tau_1}}J^T_{\mu}[\phi]n^{\mu}d\si+\int_{\tau_1}^{\tau_2}\int_{\Si_\tau}|F\phi_t|d\vol
+ 3H(1+H+2H^2)^5\int_{\tau_1}^{\tau_2}\int_{r\leq \f12
R}|\pa\phi|^2dxd\tau.
\end{align}
Notice that
\begin{equation}
 \label{energyexpr}
J^T_{\mu}[\phi]n^{\mu}d\si=\f12\left(\pa^{i}\phi\pa_i\phi-\pa^{t}\phi\pa_t\phi\right)\sqrt{-G}dx.
\end{equation}
Again we distinguish two cases.
\begin{itemize}
 \item[Case1]:When $H$ satisfies ~\eqref{smallnesscond}, we can show
 \begin{align*}
|\pa^{i}\phi\pa_i\phi-\pa^{t}\phi\pa_t\phi-(\pa_\mu\phi)^2| &= |g^{ij}\pa_i\phi\pa_j\phi-g^{00}
(\pa_t\phi)^2-|\pa\phi|^2|\\
&\leq H|\pa\phi|^2+2H(\pa_i\phi)^2\leq 3H|\pa\phi|^2.
\end{align*}
Using the facts that the metric is flat outside the cylinder $\{(t,
x)||x|\leq \f12 R\}$ and $H\leq \frac{1}{700}$, inequalities
~\eqref{MaxG}, ~\eqref{MinG} and ~\eqref{energyexpr} imply that
$$\frac{13}{14}E[\phi](\tau)\leq2\int_{\Si_{\tau}}J^T_{\mu}[\phi]n^{\mu}d\si\leq
\frac{13}{12}E[\phi](\tau).
$$
Recall that
$\tilde{E}[\phi](\tau)=E[\phi](\tau)+I[\phi]_{0}^{\tau}$. By
~\eqref{energyineq}, we have
\begin{align*}
&\quad\quad\tilde{E}[\phi](\tau_2)+\frac{1}{13}I[\phi]_{\tau_1}^{\tau_2}\leq
\frac{7}{6}\tilde{E}[\phi]
(\tau_1)+\frac{13}{6}\int_{\tau_1}^{\tau_2}\int_{\Si_\tau}|F\phi_t|dxd\tau+\frac{20}{3}H\int_{\tau_1}^{\tau_2}
\int_{r\leq \f12 R}(\pa\phi)^2 dxd\tau.
\end{align*}
Restricting the integral of the left hand side of
~\eqref{menergyeq1} to the region $r\leq \f12 R$, we can estimate
the above inequality as follows
\begin{align*}
&\leq \frac{7}{6}\tilde{E}[\phi](\tau_1)+\frac{13}{6}\int_{\tau_1}^{\tau_2}\int_{\Si_\tau}|F\phi_t|dxd\tau+\frac{20}{3}\frac{\a}{700}7\b\left(\tilde{E}[\phi](\tau_1)+\tilde{E}[\phi](\tau_2)\right)\\
&\quad +\frac{20}{3}\frac{\a}{700}2\b\int_{\tau_1}^{\tau_2}\int_{\Si_\tau}
|F\phi_r|+|F\frac{\phi}{1+r}|dxd\tau+\frac{20}{3}\frac{\a}{700}\frac{7}{13}\b I[\phi]_{\tau_1}^{\tau_2}\\
&\leq \frac{4}{3}\tilde{E}[\phi](\tau_1)+\frac{2}{15}\left(\tilde{E}[\phi](\tau_2)+\frac{1}{13}I[\phi]_{\tau_1}^{\tau_2}\right)+\frac{13}{6}\int_{\tau_1}^{\tau_2}\int_{\Si_\tau}|F\phi_t|dxd\tau\\
&\quad+\frac{4}{105}\int_{\tau_1}^{\tau_2}\int_{\Si_\tau}|F\phi_r|+\frac{|F\phi|}{1+r}dxd\tau,
\end{align*}
which implies that
\begin{equation}
\label{eb0}
\begin{split}
\tilde{E}[\phi](\tau_2)+\frac{1}{13}I[\phi]_{\tau_1}^{\tau_2}&\leq 2\tilde{E}[\phi](\tau_1)+\frac{5}{2}\int_{\tau_1}^{\tau_2}\int_{\Si_\tau}|F\phi_t|dxd\tau\\
&\quad+\frac{1}{20}\int_{\tau_1}^{\tau_2}\int_{\Si_\tau}|F\phi_r|+\frac{|F\phi|}{1+r}dxd\tau.
\end{split}
\end{equation}
Together with ~\eqref{menergyeq1}, we can show that
\begin{align*}
&\int_{\tau_1}^{\tau_2}\int_{\Si_\tau}\frac{\phi_t^2+\phi_r^2+|\nabb\phi|^2}{(1+r)^{\a+1}}+\frac{\phi^2}{r(1+r)^{\a+2}}dxd\tau\\
 &\leq 21\b \tilde{E}[\phi](\tau_1) + \frac{5}{2}\b\int_{\tau_1}^{\tau_2}\int_{\Si_\tau}7|F\phi_t|+
|F\phi_r|+|F\frac{\phi}{1+r}|dxd\tau.
\end{align*}
Using Cauchy-Schwartz inequality to $|F\phi_r|$, $|F\phi_t|$,
$F\frac{\phi}{1+r}$, for example
$$\frac{35\b}{2}|F\phi_t|\leq
\f12\left(\frac{35}{2}\b\right)^2|F|^2(1+r)^{\a+1}+\frac{1}{2}\frac{\phi_t^2}{(1+r)^{\a+1}},
$$
we can show that
\begin{equation}
\label{ILEsmall}
 \begin{split}
  &\int_{\tau_1}^{\tau_2}\int_{\Si_\tau}\frac{\phi_t^2+\phi_r^2+|\nabb\phi|^2}{(1+r)^{\a+1}}+\frac{\phi^2}{r(1+r)^{\a+2}}dxd\tau\\
 &\leq 42\b \tilde{E}[\phi](\tau_1) +\left( \left(\frac{35}{2}\b\right)^2+2(3\b)^2\right)D[\phi]_{\tau_1}^{\tau_2}\\
&\leq 42\b \tilde{E}[\phi](\tau_1)+(18\b)^2
D[\phi]_{\tau_1}^{\tau_2}.
 \end{split}
\end{equation}
Hence ~\eqref{ILE0} holds with constant $C=18^2\b^2$ for this case.
This explicit constant will be used later on. The energy inequality
~\eqref{eb} follows from ~\eqref{eb0}, ~\eqref{ILEsmall}.

\item[Case2]: When $g$ satisfies conditions $\mathcal{A}1$ and $\mathcal{A}2$, condition
\eqref{morawetz1} together with Corollary \ref{corollary3} and estimates
~\eqref{energyineq} lead to a similar inequality of \eqref{eb0} but
with a constant multiple of $D[F]_{\tau_1}^{\tau_2}$ on the right
hand side. This constant also depends on $C_0$(in
$\mathcal{A}2$), $H$, $\lambda_2$(in Corollary ~\ref{corollary3}).
However, once we have shown ~\eqref{menergyeq1} and ~\eqref{eb0},
the estimates ~\eqref{ILE0}, ~\eqref{eb} follow similarly as
in case 1.
\end{itemize}

We hence have proven ~\eqref{ILE0}, ~\eqref{eb} in any case.
Finally, the estimate ~\eqref{bILE} follows from the fact that
$$\chi- \f12 f'=r^{-1}\b - \frac{\beta(1+r)+r}{r(1+r)^{\alpha +1}}\geq \frac{R}{(1+R)^{\a+1}} \frac{1}{r}, \quad r\geq
R.
$$
\end{proof}

It will be much more convenient if we use $E[\phi](\tau)$ instead of
$\tilde{E}[\phi](\tau)$ in the following argument. We use
$\tilde{E}[\phi](\tau)$ in order to make the previous argument
rigorous. However, as we have pointed out in Remark 1, all the
statements above hold if we replace $\tilde{E}[\phi](\tau)$ with
$E[\phi](\tau)$ provided that $\tilde{E}[\phi](\tau)$ is finite. The
idea is that under the bootstrap assumptions, we first show that
$\tilde{E}[\phi](\tau)$ is finite. And then we conclude that
Proposition \ref{ILEthm} holds if we replace $\tilde{E}[\phi](\tau)$
with $E[\phi](\tau)$.

\begin{cor}
\label{cor4} Proposition ~\ref{ILEthm} holds if we replace
$\tilde{E}[\phi](\tau)$ with $E[\phi](\tau)$.
\end{cor}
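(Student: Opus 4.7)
The plan is to invoke Remark 1 directly: once $\tilde{E}[\phi](\tau) = E[\phi](\tau) + I[\phi]_0^\tau$ is known to be finite, Remark 1 already upgrades Lemma \ref{lem1}, Lemma \ref{lem2}, Corollary \ref{cor2}, and Proposition \ref{prop1} to statements in which $\tilde{E}[\phi](\tau)$ is replaced by $E[\phi](\tau)$ on their right-hand sides. Then the derivation of Proposition \ref{ILEthm} can be repeated verbatim, since every occurrence of $\tilde{E}[\phi](\tau_i)$ on the right-hand side of the intermediate estimates traces back to one of these four results — principally through Proposition \ref{prop1} applied to the boundary terms $\int_{\Sigma_{\tau_i}} \tilde{J}^X_\mu[\phi] n^\mu\, d\sigma$ and through Corollary \ref{corollary3}, whose proof in turn invokes Lemma \ref{lem1}. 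The same chain of inequalities yields \eqref{ILE0}, \eqref{eb}, and \eqref{bILE} with identical constants, in particular the explicit $C = 36^2/\alpha^2$ in Case 1 under \eqref{smallnesscond}.

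The one genuine step is to verify $\tilde{E}[\phi](\tau) < \infty$ for each finite $\tau$. I would obtain this by applying Proposition \ref{ILEthm} (in its already-proven form) on the slab $[0,\tau]$: since $\tilde{E}[\phi](0) = E[\phi](0) + I[\phi]_0^0 = E[\phi](0)$ is finite by the compact support of the initial data, and since the source $F = \Box_g \phi$ is controlled in the norm $D[F]_0^\tau$ under the bootstrap assumptions, the energy estimate \eqref{eb} gives
\[
\tilde{E}[\phi](\tau) \leq C\, E[\phi](0) + C\, D[F]_0^\tau < \infty.
\]
Thus the hypothesis of Remark 1 is fulfilled at every finite $\tau$.

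The hard part, to the extent there is one, is simply the bookkeeping of locating in the proof of Proposition \ref{ILEthm} each instance where $\tilde{E}[\phi](\tau_i)$ is introduced on the right-hand side and confirming that the upgraded lemmas apply there — no new analytic content is required. There is also no circularity: the finiteness step relies on the \emph{already-proven} Proposition \ref{ILEthm} together with independent bootstrap control on $F$, rather than on the corollary itself.
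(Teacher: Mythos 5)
Your proposal is correct and follows essentially the same route as the paper: finiteness of $\tilde{E}[\phi](\tau)$ is deduced from the already-proven energy bound \eqref{eb} on $[0,\tau]$ (using $\tilde{E}[\phi](0)=E[\phi](0)<\infty$ and $D[F]_0^{\tau}<\infty$), after which the remark following Proposition \ref{prop1} upgrades Lemma \ref{lem1}, Lemma \ref{lem2}, Corollary \ref{cor2} and Proposition \ref{prop1}, and the proof of Proposition \ref{ILEthm} goes through verbatim with $E[\phi]$ in place of $\tilde{E}[\phi]$. The only (cosmetic) difference is that where you appeal to bootstrap control of $F$ to guarantee $D[F]_0^{\tau}<\infty$, the paper keeps the corollary unconditional by simply splitting into cases and observing that when $D[F]_0^{\tau}$ is infinite the modified estimates hold automatically.
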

\begin{proof} If $D[F]^{\tau}_{0}$ is finite, then $\tilde{E}[\phi](\tau)$ is finite according to ~\eqref{eb}.
 Thus by Remark 1, we infer that Proposition ~\ref{ILEthm} is also true if we replace $\tilde{E}[\phi](\tau)$
 with $E[\phi](\tau)$.

If $D[F]_{0}^{\tau}$ is infinite for some $\tau$, then Propositiuon
~\ref{ILEthm} holds automatically if we replace
$\tilde{E}[\phi](\tau)$ with $E[\phi](\tau)$.
\end{proof}

Restricting  the left hand side of ~\eqref{ILE0} to the region $r\leq R$, we obtain an integrated local energy inequality.
\begin{cor}
If $g$ satisfies ~\eqref{smallnesscond}, then
\begin{equation}
 \label{ILE}
\int_{\tau_1}^{\tau_2}\int_{r\leq
R}\phi_t^2+|\pa_r\phi|^2+|\nabb\phi|^2+\frac{\phi^2}{r}dxdt \leq
CE[\phi](\tau_1)+C D[F]_{\tau_1}^{\tau_2},
\end{equation}
where $C=18^2\b^2(1+R)^{\a+2}$, $\b=\frac{2}{\a}$. In particular
\begin{equation}
 \label{ILEH2}
\int_{\tau_1}^{\tau_2}\int_{r\leq
R}\phi_t^2+|\pa_r\phi|^2+|\nabb\phi|^2dxdt \leq
C_2E[\phi](\tau_1)+C_2 D[F]_{\tau_1}^{\tau_2}
\end{equation}
with $C_2=18^2\b^2(1+R)^{\a+1}$.
\end{cor}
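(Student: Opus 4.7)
The plan is to derive both inequalities as direct restrictions of the integrated energy bound \eqref{ILE0} from Proposition \ref{ILEthm}(1). In the smallness regime \eqref{smallnesscond}, that inequality holds with the explicit constant $C = 18^2\beta^2 = 36^2/\alpha^2$, and by Corollary \ref{cor4} we may replace $\tilde{E}[\phi](\tau_1)$ with $E[\phi](\tau_1)$, which is exactly the form needed here.

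The first step is to drop all contributions from the region $r \geq R$ on the left-hand side of \eqref{ILE0}, keeping only the integral over $\{r \leq R\}$. On this region the weights satisfy
\[
(1+r)^{\alpha+1} \leq (1+R)^{\alpha+1}, \qquad r(1+r)^{\alpha+2} \leq R(1+R)^{\alpha+2} \leq (1+R)^{\alpha+2},
\]
so multiplying through by the larger of these two, namely $(1+R)^{\alpha+2}$, yields
\[
\int_{\tau_1}^{\tau_2}\int_{r\leq R}\phi_t^2+|\pa_r\phi|^2+|\nabb\phi|^2+\frac{\phi^2}{r}\,dx\,d\tau \leq 18^2\beta^2(1+R)^{\alpha+2}\left(E[\phi](\tau_1)+D[F]_{\tau_1}^{\tau_2}\right),
\]
which is \eqref{ILE}. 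The slightly stronger bound \eqref{ILEH2} then follows by observing that for the derivative terms alone the weaker weight $(1+r)^{\alpha+1} \leq (1+R)^{\alpha+1}$ suffices, so we may drop the $\phi^2/r$ contribution and replace $(1+R)^{\alpha+2}$ by $(1+R)^{\alpha+1}$, producing the constant $C_2 = 18^2\beta^2(1+R)^{\alpha+1}$.

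There is no real obstacle here; the only point of care is the bookkeeping of the explicit constants, in particular verifying that the factor $18^2\beta^2$ produced in the proof of Proposition \ref{ILEthm} (see \eqref{ILEsmall}) is indeed what appears after the weight comparison, and that the $\phi^2/r$ term requires one extra power of $(1+R)$ coming from the weight $(1+r)^{\alpha+2}$ in \eqref{ILE0}.
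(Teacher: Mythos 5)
Your proposal is correct and is essentially the paper's own argument: the paper proves the corollary in one line by restricting the left-hand side of \eqref{ILE0} to $r\leq R$ (with the explicit constant $18^2\b^2$ from Case 1 of Proposition \ref{ILEthm} and Corollary \ref{cor4} to pass from $\tilde{E}$ to $E$), exactly as you do. One small bookkeeping remark: your displayed chain $r(1+r)^{\a+2}\leq R(1+R)^{\a+2}\leq (1+R)^{\a+2}$ is false when $R>1$ and is also not what is needed; since the factor $\frac{1}{r}$ is common to $\frac{\phi^2}{r(1+r)^{\a+2}}$ and $\frac{\phi^2}{r}$, the only comparison required is $(1+r)^{\a+2}\leq (1+R)^{\a+2}$ for $r\leq R$, after which multiplying \eqref{ILE0} by $(1+R)^{\a+2}$ (respectively $(1+R)^{\a+1}$ for \eqref{ILEH2}) gives precisely the stated constants.
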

If $g$ satisfies ~\eqref{smallnesscond}, we can show that $g$
satisfies the condition $\mathcal{A}1$. The corollary implies that
$g$ also satisfies the condition ~\eqref{morawetz1}.

\section{Weighted Energy Inequality}
In this section, we revisit the p-weighted energy inequality in a neighborhood of
 the null infinity originally developed by M. Dafermos and I. Rodnianski in ~\cite{newapp}. Starting
 from the integrated local energy inequality ~\eqref{ILE}, the p-weighted energy inequalities
 allow us to obtain the decay of the energy flux $E[\phi](\tau)$. Moreover, the p-weighted energy inequailites play 
an important role in estimating the quadratic nonlinearity with null condition.

\bigskip

To avoid too many constants, we use the notation $A\les B$ to indicate that there is a constant $C$, depending on $R$, $\a$, $\lambda$, $h$ and $C_0$, such that $A\leq C B$.
\begin{prop}
For any $0<p\leq 2$, we have the p-weighted energy inequality
 \begin{align}
\notag
&\int_{S_{\tau_2}}r^p\psi_v^2dv d\om+\int_{\tau_1}^{\tau_2}\int_{S_\tau}r^{p-1}(\psi_v^2+|\nabb\psi|^2)dvd\om d\tau\\
\label{pWEineq} &\les E[\phi](\tau_1)+
\int_{S_{\tau_1}}r^p\psi_v^2dv d\om +
\int_{\tau_1}^{\tau_2}\int_{S_\tau}r^{p+1}
|F|^2d\vol+D[F]_{\tau_1}^{\tau_2}, \quad \psi=r\phi,
\end{align}
where the implicit constant also depends on $p$.
\end{prop}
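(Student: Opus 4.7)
My plan is to adapt the $p$-weighted energy method of Dafermos--Rodnianski in the exterior region where $g$ coincides with Minkowski. Since $h$ is supported in $\{r\le R/2\}$, the entire region $\bigcup_\tau S_\tau\subset\{r\ge R\}$ lies in the flat zone. In the double-null coordinates $(u,v,\om)$, writing $\psi:=r\phi$ turns the Minkowski wave equation into
\[
-\pa_u\pa_v\psi + \lap\psi = rF,\qquad F=\Box_g\phi,
\]
with $\pa_u r=-1$, $\pa_v r=1$. Multiplying by $2r^p\pa_v\psi$, integrating over the unit sphere, and using the angular integration by parts for $\int_{S^2}\pa_v\psi\,\lap\psi\,d\om$ gives the divergence identity
\begin{align*}
\pa_u\!\!\int_{S^2}\!\!r^p\psi_v^2 d\om &+ \pa_v\!\!\int_{S^2}\!\!r^p|\nabb\psi|^2 d\om + \int_{S^2}\!\bigl[pr^{p-1}\psi_v^2 + (2-p)r^{p-1}|\nabb\psi|^2\bigr] d\om \\
&= -2\!\!\int_{S^2}\!\!r^{p+1}F\psi_v\, d\om.
\end{align*}
For $0<p\le 2$ the bulk coefficients $p$ and $2-p$ are nonnegative, so the identity has a favorable structure.

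The next step is to apply Stokes's theorem in the two-dimensional exterior region $D_{\tau_1}^{\tau_2} := \{u_{\tau_1}\le u\le u_{\tau_2}\}\cap\{r\ge R\}$. Its boundary is $S_{\tau_1}\cup S_{\tau_2}\cup \{r=R,\ u_{\tau_1}\le u\le u_{\tau_2}\}\cup \mathcal I_{\tau_1}^{\tau_2}$, and the contribution from null infinity vanishes in the limit by Lemma \ref{lem1}. The spacetime forcing is handled by the Cauchy--Schwarz inequality
\[
2r^{p+1}|F\psi_v|\le \tfrac{p}{2}r^{p-1}\psi_v^2 + \tfrac{2}{p}r^{p+3}F^2,
\]
whose first piece is absorbed into the left-hand side while the second, using the flat-region volume form $d\vol=2r^2\,du dv d\om$, produces exactly the $\int\!\int_{S_\tau}r^{p+1}|F|^2\,d\vol$ term on the right of \eqref{pWEineq}.

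The main obstacle is the timelike-cylinder contribution on $\{r=R\}$, namely $\int_{u_{\tau_1}}^{u_{\tau_2}}\!\int_{S^2}R^p(\psi_v^2-|\nabb\psi|^2)\big|_{r=R}\, d\om du$. Rather than using a trace inequality, which would require $H^2$-type bounds on $\phi$, I would instead run the whole argument with the cutoff field $\tilde\psi:=\chi(r)\psi$, where $\chi$ is a smooth radial cutoff with $\chi\equiv 0$ on $\{r\le R/2\}$ and $\chi\equiv 1$ on $\{r\ge R\}$. Since $\tilde\psi$ vanishes in a neighborhood of $r=R/2$, Stokes's theorem applied on the full slab $\{u_{\tau_1}\le u\le u_{\tau_2}\}$ produces no timelike-cylinder boundary term; the price is the bulk commutator source $2\chi'(r)\pa_r\psi+\chi''(r)\psi$, supported in the annulus $R/2\le r\le R$ and pointwise bounded by a constant times $|\pa\phi|+|\phi|$. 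The resulting spacetime integral is then controlled by the integrated local energy estimate of Corollary \ref{cor4}, giving $\lesssim E[\phi](\tau_1)+D[F]_{\tau_1}^{\tau_2}$. Since $\chi\equiv 1$ on each $S_\tau$, the left-hand side for $\tilde\psi$ agrees with that for $\psi$, and reassembling the flux on $S_{\tau_1}$, the forcing term, and the cutoff error produces \eqref{pWEineq}.
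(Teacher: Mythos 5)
Your overall strategy is sound and genuinely different from the paper's at the one nontrivial point: the paper keeps the timelike boundary term at $r=R$ and bounds it by running the same multiplier identity with $p=0$, so that the $\{r=R\}$ term is expressed through $\int_{S_{\tau_i}}\psi_v^2$, the angular bulk and the null-infinity flux, all controlled via Corollary~\ref{cor2}, \eqref{eb} and \eqref{bILE}; you instead kill that boundary term with a radial cutoff and pay a commutator supported in $R/2\le r\le R$, to be absorbed by the integrated local energy estimate. That route can be made to work, but as written it has a concrete gap coming from your choice of region. The slab $\{u_{\tau_1}\le u\le u_{\tau_2}\}$ dips below $\Si_{\tau_1}$ inside the annulus: on the support of $\tilde\psi=\chi\psi$ the past boundary $\{u=u_{\tau_1}\}$ does not stop at $S_{\tau_1}$ but continues inward through $R/2\le r\le R$, at times $t\in[\tau_1-\frac{1}{2}R,\tau_1)$, and contributes an extra flux $\int r^p(\pa_v\tilde\psi)^2\,dv\,d\om$ over that segment which you never account for (your ``reassembling'' step treats the past flux as exactly $\int_{S_{\tau_1}}r^p\psi_v^2$). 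Likewise the commutator bulk $2\chi'\pa_r\psi+\chi''\psi$ is integrated over annulus times in $[\tau_1-\frac{1}{2}R,\tau_2]$, whereas \eqref{ILE}/Corollary~\ref{cor4} only controls $\int_{\tau_1}^{\tau_2}$; energy estimates do not propagate backwards for free, so neither extra piece is bounded by $E[\phi](\tau_1)+D[F]_{\tau_1}^{\tau_2}$ without a further argument. The repair is easy: apply the divergence identity for $\tilde\psi$ in the region between $\Si_{\tau_1}$ and $\Si_{\tau_2}$ (null cones for $r\ge R$, the slices $t=\tau_i$ in the annulus). The annulus pieces are spacelike, the flux of your current there is $\sim r^p\bigl((\pa_v\tilde\psi)^2+|\nabb\tilde\psi|^2\bigr)$ with a favorable sign on the future slice and, on the past slice, bounded by $\int_{R/2\le r\le R}|\pa\phi|^2+|\nabb\phi|^2+\phi^2\,dx\les E[\phi](\tau_1)$ using \eqref{phiboundn}; the commutator bulk then lives in $[\tau_1,\tau_2]$ and is controlled exactly as you intended.

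Two smaller points. First, the null-infinity contribution is not handled by Lemma~\ref{lem1}: the term produced by this multiplier is $\int_{\mathcal I_{\tau_1}^{\tau_2}}r^p|\nabb\tilde\psi|^2\,du\,d\om$, which need not vanish; it simply enters with a favorable sign (work at finite $v$ and discard it), which is also how the paper treats it for $p>0$. Second, to keep the commutator strictly inside the flat region you should take $\chi\equiv 0$ on a slightly larger ball than $\{r\le R/2\}$ (or note that $h$ is supported in the closed ball $\{r\le \frac{1}{2}R\}$ so the flat reduced equation \eqref{waveqpsi} holds wherever $\chi'\neq 0$); this is cosmetic. With the corrected region the argument gives \eqref{pWEineq} with the stated right-hand side, and it is a legitimate alternative to the paper's $p=0$ bootstrap of the boundary term.
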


\begin{proof}  When $r\geq R$, the metric is flat. Rewrite the equation \eqref{THEWAVEEQ} in null coordinates
\begin{equation}
\label{waveqpsi} -\pa_u \pa_v \psi+\lap \psi=rF,\quad \psi:=r\phi.
\end{equation}
Multiplying the equation by $r^p \pa_v\psi$ and then integration by
parts in the region bounded by the two null hypersurfaces
$S_{\tau_1}, S_{\tau_2}$ and the hypersurface $\{(t, x)||x|=R\}$, we
obtain
\begin{align}
\notag
&\int_{S_{\tau_2}} r^p (\pa_v\psi)^2 dvd\om +\int_{\tau_1}^{\tau_2}\int_{S_\tau}2r^{p+1}F\cdot\pa_v\psi dvd\tau d\om\\
 \notag& +\int_{\tau_1}^{\tau_2}\int_{S_\tau} r^{p-1}  \left (p(\pa_v\psi)^2 +
(2-p) |\nabb\psi|^2\right)dvd\tau d\om +\int_{\mathcal I_{\tau_1}^{\tau_2}} r^p |\nabb\psi|^2 du d\om\\
=& \int_{S_{\tau_1}}r^p (\pa_v\psi)^2 dvd\om +\int_{\tau_1}^{\tau_2} r^p \left (|\nabb\psi|^2- (\pa_v\psi)^2\right)d\om d\tau |_{r=R}.
\label{pWE}
\end{align}
We claim that we can estimate the boundary term on $\{r=R\}$ as
follows
\begin{align*}
\left|\int_{\tau_1}^{\tau_2} r^p \left (|\nabb\psi|^2- (\pa_v\psi)^2\right)d\om d\tau |_{r=R}\right|&=R^p\left|\int_{\tau_1}^{\tau_2}\left (|\nabb\psi|^2- (\pa_v\psi)^2\right)d\om d\tau\right|\\
&\les D[F]_{\tau_1}^{\tau_2}+E[\phi](\tau_1).
\end{align*}
In fact, it suffices to consider the case for $p=0$. Thus set $p=0$ in the above p-weighted energy inequality \eqref{pWE}.
Notice that
\begin{align*}
\int_{\tau_1}^{\tau_2}\int_{S_\tau}2r|F|\cdot|\pa_v\psi| dvd\tau d\om & \leq\int_{\tau_1}^{\tau_2}\int_{S_\tau}|F|^2(1+r)^{\a+1} +2\frac{r^2\phi_v^2 + \phi^2}{r^2(1+r)^{\a+1}}d\vol\\
&\les E[\phi](\tau_1)+D[F]_{\tau_1}^{\tau_2}
\end{align*}
and
 $$\int_{\mathcal I_{\tau_1}^{\tau_2}} r^p |\nabb\psi|^2 du d\om=\int_{\mathcal I_{\tau_1}^{\tau_2}}
  |\nabb\phi|^2 r^2du d\om\leq I[\phi]_{\tau_1}^{\tau_2}.$$
Inequalities \eqref{phipsieq}, \eqref{eb} and \eqref{bILE} then
yield the desired estimates. Here we recall that, by Corollary
~\ref{cor4}, Proposition ~\ref{ILEthm} holds if we replace
$\tilde{E}[\phi](\tau)$ with $E[\phi](\tau)$.

\bigskip

For general $0<p\leq 2$, we can estimate the inhomogeneous term as
follows
$$r^{p+1}|F\pa_v\psi|\leq \frac{p}{2}r^{p-1}(\pa_v\psi)^2 +
\frac{2}{p}r^{p+3}|F^2|.
$$
Then the p-weighted energy inequality ~\eqref{pWEineq} follows by
absorbing the first term.
\end{proof}

\bigskip

Intuitively, the inequality \eqref{pWEineq} indicates that the
solution $\phi$, at least the good derivative of the solution
$\pa_v\phi$,
 decays fast in $r$ since $p\in[0, 2)$ and $\psi=r\phi$.
 However,
 for the nonlinear problem, merely decay in $r$ is not enough to obtain the long time existence
  of the solution. We thus have to transfer the decay in $r$ to the decay in time $t$. The first step
   to realize this is to find some quantities which decay in $t$. However, in this context, decay in time $t$ is characterized by 
the parameter $\tau$ of the foliation $\Si_\tau$. We show that, under appropriate assumptions on the nonlinearity $F$,
    the energy flux through the hypersurface $\Si_\tau$ decays in $\tau$ by combining
    the integrated local energy inequality and the p-weighted energy inequality.
\begin{prop}
\label{prop4}
If there is a constant $C_1$ such that $F$ satisfies the following conditions:
\begin{description}
\item[(a)] $\int_{\tau_1}^{\tau_2}\int_{S_\tau}|F|^2r^{3-\a}d\vol\leq C_1,$
\item[(b)] $\int_{\tau_1}^{\tau_2}\int_{S_\tau}|F|^2r^{2}d\vol\leq C_1(1+\tau_1)^{-1+\a},$
\item[(c)] $D[F]_{\tau_1}^{\tau_2}\les C_1(1+\tau_1)^{-2+\a}+E[\phi](\tau_1)$
\end{description}
for all $\tau_1\leq\tau_2$,  then we have the energy flux decay
$$E[\phi](\tau)\les \left(\ep^2 E_0+C_1\right)(1+\tau)^{-2+\a}.$$
\end{prop}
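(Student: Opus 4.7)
The plan is to implement a two-level Dafermos--Rodnianski $r^p$--hierarchy, at exponents $p=2-\a$ and $p=1$, bridged by a H\"older interpolation, and then to propagate the resulting pointwise decay at pigeonhole times to every $\tau$ via the near--monotonicity of the energy furnished by \eqref{eb}. I write $I_p(\tau)=\int_{S_\tau}r^p\psi_v^2\,dv d\om$ and $\mathcal F_p(\tau)=\int_{S_\tau}r^p(\psi_v^2+|\nabb\psi|^2)\,dv d\om$ with $\psi=r\phi$, and let $E_{\text{in}}(\tau)$ denote the portion of $E[\phi](\tau)$ on $r\le R$.

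First, a single application of \eqref{eb} from $0$ to $\tau$, combined with condition (c) at $\tau_1=0$ and the compact support of the initial data (so that $\tilde E[\phi](0)\les \ep^2 E_0$), gives the uniform bound $E[\phi](\tau)\les \ep^2 E_0+C_1$. Next, \eqref{pWEineq} with $p=2-\a$ from $0$ to $\tau$ yields, using (a), (c), and $I_{2-\a}(0)<\infty$,
\[I_{2-\a}(\tau)+\int_0^\tau \mathcal F_{1-\a}(s)\,ds\les \ep^2 E_0+C_1.\]
Since $r\ge R\ge 1$ on $S_\tau$, the bulk $\mathcal F_{1-\a}$ dominates both $R^{1-\a}\mathcal F_0$ and $I_{1-\a}$, while \eqref{ILE0} with (c) gives the analogous integrated bound on $E_{\text{in}}$. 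A simultaneous dyadic pigeonhole on $[\tau/2,\tau]$ then produces $\tau_1^*\in[\tau/2,\tau]$ with
\[\mathcal F_0(\tau_1^*)+I_{1-\a}(\tau_1^*)+E_{\text{in}}(\tau_1^*)\les\tau^{-1}(\ep^2 E_0+C_1),\]
and, after accounting for the $\psi=r\phi$ boundary terms via Corollary \ref{cor2} and Lemma \ref{lem1}, this delivers $E[\phi](\tau_1^*)\les\tau^{-1}(\ep^2 E_0+C_1)$.

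The crucial bridge is the H\"older interpolation $I_1\le I_{2-\a}^{\,\a}\,I_{1-\a}^{\,1-\a}$, which combined with $I_{2-\a}(\tau_1^*)\les \ep^2 E_0+C_1$ gives $I_1(\tau_1^*)\les \tau^{-(1-\a)}(\ep^2 E_0+C_1)$; this is precisely the rate that pairs with the decaying inhomogeneity $\int r^2|F|^2\les C_1\tau^{-1+\a}$ supplied by (b). Applying \eqref{pWEineq} with $p=1$ on $[\tau_1^*,2\tau]$, the right--hand side becomes $\les \tau^{-1+\a}(\ep^2 E_0+C_1)$, so $\int_{\tau_1^*}^{2\tau}\mathcal F_0(s)\,ds\les \tau^{-1+\a}(\ep^2 E_0+C_1)$, and \eqref{ILE0} restarted from $\tau_1^*$ gives $\int_{\tau_1^*}^{2\tau}E_{\text{in}}(s)\,ds\les \tau^{-1}(\ep^2 E_0+C_1)$. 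A second dyadic pigeonhole on $[\tau,2\tau]$ then yields $\tau^{**}\in[\tau,2\tau]$ with $E[\phi](\tau^{**})\les \tau^{-2+\a}(\ep^2 E_0+C_1)$, and \eqref{eb} together with (c) propagates this bound forward dyadically, giving $E[\phi](\tau)\les (\ep^2 E_0+C_1)(1+\tau)^{-2+\a}$ for all $\tau$.

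The main obstacle is the interpolation: because condition (a) caps the available $r^p$ weight at $p=2-\a$ rather than the $p=2$ of the classical linear setting, a single pigeonhole only yields $\tau^{-1}$ decay, and one must bridge between the $I_{2-\a}$ and $I_{1-\a}$ information to fuel the subsequent $p=1$ step and recover the sharp $\tau^{-2+\a}$ rate dictated by (b).
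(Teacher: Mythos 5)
Your overall strategy is the paper's own: the $p=2-\a$ weighted estimate, a pigeonhole giving decay of $I_{1-\a}$ at good times, interpolation against the uniform $I_{2-\a}$ bound to reach $I_1\les\tau^{-1+\a}$, a $p=1$ step fueled by condition (b), and forward propagation by \eqref{eb}. The genuine gap is the sentence claiming that, ``after accounting for the $\psi=r\phi$ boundary terms via Corollary \ref{cor2} and Lemma \ref{lem1}'', the pigeonholed bound on $\mathcal F_0(\tau_1^*)+E_{\text{in}}(\tau_1^*)$ delivers $E[\phi](\tau_1^*)\les\tau^{-1}(\ep^2E_0+C_1)$. The discrepancy between $\mathcal F_0$ and the true outer flux is the trace term $R\int_{\om}\phi^2(\tau_1^*,R,\om)\,d\om$ (the term at $v=\infty$ has a favorable sign), and both results you cite control this term only by the full energy itself: the error in Corollary \ref{cor2} is $2\tilde E[\phi](\tau)$, and Lemma \ref{lem1} gives $r\int_\om\phi^2\le\tilde E[\phi](\tau)$, which at $\tau_1^*$ is only known to be $O(\ep^2E_0+C_1)$. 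The step is therefore circular and produces no decay for $E[\phi](\tau_1^*)$. Since $E[\phi](\tau_1^*)$ enters the right-hand side of the $p=1$ inequality both directly and through condition (c), a single pass then yields only $\tau^{-1}$, not $\tau^{-2+\a}$; and the same difficulty recurs at your second pigeonhole, where $\int_\tau^{2\tau}E[\phi](s)\,ds$ has to be reassembled from $\int\mathcal F_0$ and $\int E_{\text{in}}$.

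The missing ingredient is control of the zeroth-order trace at $r=R$ by quantities whose time integral genuinely decays: the paper uses the elementary estimate \eqref{pro1topro2}, $R^2\phi^2(R)\les\int_0^R r\phi^2+r^2\phi^2+r^2(\pa_r\phi)^2\,dr$, combined with the $\phi^2/r$ term present in the bulk of \eqref{ILE0}/\eqref{ILE}, and it does so in time-integrated form only, never claiming pointwise decay of $E[\phi]$ at the pigeonhole time. Instead it carries $E[\phi](\tau_{n-1})$ through the $p=1$ step, arrives at the recursion $E[\phi](\tau_n)\les\tau_n^{-1}E[\phi](\tau_{n-1})+(1+\tau_n)^{-2+\a}\bigl(\ep^2E_0+C_1\bigr)$, and resolves it in two passes: the uniform bound from \eqref{eb} first gives $\tau_n^{-1}$ decay, and reinserting that gives the full rate. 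You can repair your argument either by adding $\int_{r\le R}\phi^2/r\,dx$ to the list of simultaneously pigeonholed quantities and invoking \eqref{pro1topro2} to convert it, together with $E_{\text{in}}(\tau_1^*)$, into decay of the $r=R$ trace, or by dropping the pointwise claim at $\tau_1^*$ and running the paper's two-pass recursion; as written, the step does not follow from the lemmas you cite.
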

\begin{proof} We first take $p=2 - \a$ in the p-weighted
energy inequality ~\eqref{pWEineq}. Since the initial data are
supported in $\{|x|\leq R\}$, the finite speed of propagation for
wave equations shows that the solution $\phi$ vanishes on $S_0$. Let
$\tau_2=\tau$, $\tau_1=0$. Under our assumptions on $F$, we obtain
\begin{align}
\label{pro:a}
\int_{S_\tau}r^{2-\a}\psi_v^2 d\om dv&\les \ep^2 E_0+ C_1,\\
\label{pro:2} \int_{\tau_1}^{\tau_2}\int_{S_\tau}r^{1-\a}\psi_v^2
d\om dvd\tau&\leq\int_{0}^{\tau_2} \int_{S_\tau}r^{1-\a}\psi_v^2
d\om dvd\tau\les \ep^2 E_0+ C_1.
\end{align}
The second inequality holds for all $\tau_1\leq\tau_2$. We claim
that there exists a dyadic sequence $\{\tau_n\}_{n=3}^{\infty}$ such
that
\begin{equation}
\label{pro:1a} \int_{S_{\tau_n}}r^{1-\a}\psi_v^2dv d\om \leq
(1+\tau_n)^{-1}\left(\ep^2 E_0+ C_1\right),
\end{equation}
where $\tau_n$ satisfies the inequality $\ga^{-2}\tau_n\leq\tau_{n-1}\leq\ga^2\tau_n$ for
 some large constant $\ga$ depending
 on $R$, $\lambda$, $\a$, $h$ and $C_0$. In fact it suffices to show that there exists $\tau_n\in[\ga^n, \ga^{n+1}]$
 such that \eqref{pro:1a} holds. Otherwise
$$\int_{\ga^{k}}^{\ga^{k+1}}\int_{S_\tau}r^{1-\a}\psi_v^2dvd\om d\tau \gtrsim \ln\ga \left(\ep^2 E_0+ C_1\right),$$
which contradicts to ~\eqref{pro:2} if $\ga$ is large enough.

Interpolate between ~\eqref{pro:a} and ~\eqref{pro:1a}. We get
\begin{equation}
\label{pro:1} \int_{S_{\tau_n}}r\psi_v^2dv d\om \les
(1+\tau_n)^{-1+\a}\left(\ep^2 E_0+ C_1\right).
\end{equation}
Now, take $p=1$ in the p-weighted energy inequality
~\eqref{pWEineq}. Using the estimates ~\eqref{pro:1} and conditions
$(b), (c)$, we obtain
\begin{align}
\notag
&\int_{S_{\tau}}r(\pa_v\psi)^2 d\om dv +
\int_{\tau_{n-1}}^{\tau}\int_{S_t}(\pa_v\psi)^2 + |\nabb\psi|^2
d\om dv dt\\
\label{pro:1t}
&\les \int_{S_{\tau_{n-1}}}r(\pa_v\psi)^2 d\om dv +
E[\phi](\tau_{n-1})+ (1+\tau_{n-1})^{-1+\a}C_1\\
\notag
&\les (1+\tau_{n-1})^{-1+\a}\left(\ep^2 E_0+ C_1\right) + E[\phi](\tau_{n-1})\\
\notag
&\les(1+\tau)^{-1+\a}\left(\ep^2 E_0+ C_1\right) + E[\phi](\tau_{n-1})
\end{align}
for all $\tau\in[\tau_{n-1}, \tau_{n}]$.

Our goal is to retrieve the full energy flux through the
hypersurface $\Si_\tau$. Since
\begin{align*}
\int_{S_\tau}(\pa_v\psi)^2 + |\nabb\psi|^2
d\om dv&=\int_{S_\tau}\left(\phi_v^2 + |\nabb\phi|^2\right)r^2
d\om dv+\left.\int_{\om}r\phi^2d\om\right|_{R}^{\infty}\\
&\geq2\int_{S_\tau}J^T_\mu[\phi]n^{\mu}d\si-R\int_{\om}\phi^2d\om
\end{align*}
and
\begin{align}
\label{pro1topro2} R^2\phi^2&=2\int_0^{R}r\phi^2 dr+2\int_0^R
r^2\phi\cdot \pa_r\phi dr\leq2\int_0^{R}r\phi^2
dr+\int_0^{R}r^2\phi^2 dr+\int_0^{R}r^2(\pa_r\phi)^2 dr,
\end{align}
integrate over $[\tau_1, \tau_2]\times S^2$. We obtain
\begin{align*}
 R^2\int_{\tau_1}^{\tau_2}\int_{|\om|=1}\phi^2(\tau, R, \om)d\om
d\tau&\leq\int_{\tau_1}^{\tau_2}\int_{r\leq
R}2\frac{\phi^2}{r}+
\phi^2
+(\pa_r\phi)^2dxd\tau\\
&\les  E[\phi](\tau_1) + (1+\tau_1)^{-2+\a}C_1
\end{align*}
by ~\eqref{ILE} and condition $(c)$. Adding ~\eqref{ILE} and
~\eqref{pro:1t}, we can show that
\begin{align*}
\int_{\tau_{n-1}}^{\tau_n}E[\phi](\tau)d\tau &=\int_{\tau_{n-1}}^{\tau_n}\int_{r\leq R}
\phi_t^2+|\nabla\phi|^2 dxd\tau +
2\int_{\tau_{n-1}}^{\tau_n}\int_{S_\tau}J^T_\mu[\phi]n^\mu
d\si d\tau\\
& \leq\int_{\tau_{n-1}}^{\tau_n}\int_{r\leq R}
\phi_t^2+|\nabla\phi|^2 dxd\tau +
\int_{\tau_{n-1}}^{\tau_n}\int_{S_\tau}\psi_v^2 + |\nabb\psi|^2
dvd\om d\tau\\
&\quad\quad +\int_{\tau_{n-1}}^{\tau_n}\int_{|\om|=1}R\phi^2(\tau,
R, \om)d\om d\tau\\
&\les E[\phi](\tau_{n-1}) + (1+\tau_{n-1})^{-1+\a}\left(\ep^2
E_0+C_1\right).
\end{align*}
Now the energy inequality ~\eqref{eb} shows that for all
$\tau\leq\tau_n$
$$E[\phi](\tau_n)\les E[\phi](\tau)
+D[F]_{\tau}^{\tau_n} \les E[\phi](\tau)+(1+\tau)^{-2+\a}\left(\ep^2
E_0+C_1\right).
$$
Hence we can estimate
$$\int_{\tau_{n-1}}^{\tau_n}\left(E[\phi](\tau_n)-C_1(1+\tau)^{-2+\a}\right)d\tau\les E[\phi](\tau_{n-1})
+ (1+\tau_{n-1})^{-1+\a}\left(\ep^2 E_0+C_1\right), $$ which,
together with the fact that the sequence $\{\tau_n\}$ is dyadic,
implies that
$$E[\phi](\tau_n)\les\tau_n^{-1}E[\phi](\tau_{n-1})+(1+\tau_n)^{-2+\a}\left(\ep^2
E_0+C_1\right).
$$
In particular the energy inequality ~\eqref{eb} shows that
$$E[\phi](\tau)\les \ep^2 E_0+C_1,\quad \forall \tau\geq 0.$$
Therefore
$$E[\phi](\tau_n)\les\tau_n^{-1}E[\phi](\tau_{n-1})+(1+\tau_n)^{-2+\a}\left(\ep^2
E_0+C_1\right)\les \tau_{n}^{-1}\left(\ep^2 E_0+C_1\right),$$ which,
in turn, shows that
\begin{align*}
E[\phi](\tau_n)&\les(1+\tau_n)^{-2+\a}\left(\ep^2 E_0+C_1\right)+\tau_n^{-1}E[\phi](\tau_{n-1})\\
&\les(1+\tau_n)^{-2+\a}\left(\ep^2 E_0+C_1\right)+\tau_n^{-2}\left(\ep^2 E_0+C_1\right)\\
&\les(1+\tau_n)^{-2+\a}\left(\ep^2 E_0+C_1\right).
\end{align*}
The proposition then follows by the fact that $\tau_n$ is dyadic.
\end{proof}
\bigskip

\section{Pointwise Decay of the Solution}
A key ingredient for proving the global existence result for
nonlinear problem is to derive a pointwise decay of the solution.
Merely decay of the energy flux $E[\phi](\tau)$ we have obtained
previously is not sufficient unless we show
 the decay of the energy for the higher derivatives of the solution. In Minkowski space,
 this is a direct consequence of the existence
  of global symmetries. On our inhomogeneous background, no such global symmetry exists.
  To solve this problem, we commute $\Box_g$ with $\Om$, $T$ and control the error terms
   , which are supported in the cylinder $\{(t, x)||x|\leq \frac{1}{2}R\}$, by using elliptic estimates.

\begin{lem}
\label{propofwaveop}
The wave operator $\Box_g$ has the following properties:
\begin{align}
 \label{wavelap}
&g^{ij}\pa_{ij}=\Box_g - g^{00}\pa_{tt}-2g^{0i}\pa_{ti}-\frac{1}{\sqrt{-G}}\pa_\a(g^{\a\b}\sqrt{-G})\pa_\b,\\
\label{commutatorom}
&[\Box_g, \Om]=f^{\a\b}\pa_{\a\b}+f^{\b}\pa_\b,\\
\label{commutatort} &[\Box_g, T]=-\pa_t
g^{\a\b}\cdot\pa_{\a\b}-\pa_t\left(\frac{1}{\sqrt{-G}}\pa_\a(g^{\a\b}\sqrt{-G})\right)\pa_\b,
\end{align}
where $f^{\a\b}$, $f^{\b}$ are smooth functions supported in the cylinder $\{(t, x)||x|\leq \f12 R\}$ and satisfy
\[
 |f^{\a\b}|\les H, \qquad |f^{\b}|\les H.
\]

\end{lem}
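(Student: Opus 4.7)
The plan is to expand $\Box_g$ in coordinates as a second-order operator and read off the three statements, using the Killing property of $\Om_{ij}$ for the Minkowski metric as the only nontrivial input. Distributing the outer derivative in the definition of $\Box_g$ gives
\[
\Box_g\phi = g^{\a\b}\pa_\a\pa_\b\phi + c^\b\pa_\b\phi,\qquad c^\b:=\frac{1}{\sqrt{-G}}\pa_\a\bigl(g^{\a\b}\sqrt{-G}\bigr),
\]
which will be the common starting point for all three identities.

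For \eqref{wavelap} I simply split the principal symbol as $g^{\a\b}\pa_{\a\b}=g^{00}\pa_{tt}+2g^{0i}\pa_{ti}+g^{ij}\pa_{ij}$ and solve for the spatial piece; this rearrangement is the claimed identity. For \eqref{commutatort}, since $T=\pa_t$ has constant coefficients and commutes with every coordinate derivative, the commutator acts only on the coefficients of $\Box_g$, producing $[\Box_g,\pa_t]=-\pa_t(g^{\a\b})\,\pa_{\a\b}-\pa_t(c^\b)\,\pa_\b$, which is exactly \eqref{commutatort}.

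The substantive identity is \eqref{commutatorom}. The key observation is that $\Om_{ij}$ is a Killing field for the Minkowski metric $m$, so $[\Box_m,\Om]=0$ with $\Box_m=m^{\a\b}\pa_{\a\b}$. I would verify this by direct computation using $[\pa_t,\Om_{ij}]=0$ and $[\pa_k,\Om_{ij}]=\delta_{ki}\pa_j-\delta_{kj}\pa_i$; the resulting second-order contributions cancel by the antisymmetry in $(i,j)$ and the fact that $m^{\a\b}$ is diagonal. Writing $g^{\a\b}=m^{\a\b}+h^{\a\b}$, this reduces the problem to
\[
[\Box_g,\Om]=[\,h^{\a\b}\pa_{\a\b}+c^\b\pa_\b,\ \Om\,],
\]
where $c^\b$ itself vanishes identically when $h\equiv 0$. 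Expanding by Leibniz,
\[
[h^{\a\b}\pa_{\a\b},\Om]=h^{\a\b}[\pa_{\a\b},\Om]-\Om(h^{\a\b})\,\pa_{\a\b},
\]
and the commutator $[\pa_{\a\b},\Om]$ is a second-order differential operator with purely numerical coefficients (obtained by iterating the two rules above). The resulting $f^{\a\b}$ is therefore a linear combination of $h^{\a\b}$ and $\Om(h^{\a\b})$, and the analogous computation for the subprincipal term produces $f^\b$ as a linear combination of $c^\b$ and $\Om(c^\b)$. All of these coefficients inherit the support $\{|x|\le\f12 R\}$ from $h$, and are bounded by $\lesssim H$ in the sense of the paper (the implicit constant is allowed to absorb higher derivatives of $h$).

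The only step in the proof that is not pure bookkeeping is verifying $[\Box_m,\Om]=0$, and even this reduces to a short algebraic calculation once the two elementary commutation rules for $\pa_\gamma$ and $\Om_{ij}$ are in hand. I do not expect any genuine analytic obstacle.
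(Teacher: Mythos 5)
Your proposal is correct and is essentially the paper's argument: the paper's proof simply says to expand $\Box_g=g^{\a\b}\pa_{\a\b}+\frac{1}{\sqrt{-G}}\pa_\a(g^{\a\b}\sqrt{-G})\pa_\b$ and use that $g=m+h$ with $h$ supported in $\{|x|\le\f12 R\}$, which is exactly the computation you carry out (the only implicit input being $[\Box_m,\Om]=[\Box_m,T]=0$, which you verify). Your remark that $f^{\b}$ involves $\Om$ applied to the first-order coefficient, hence second derivatives of $h$ absorbed into the implicit constant, is consistent with the paper's convention that constants in $\les$ may depend on $h$.
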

\begin{proof}
 We only have to recall the definition of the covariant wave operator 
\[
 \Box_g\phi=\frac{1}{\sqrt{-G}}\pa_{\mu}(g^{\mu\nu}\sqrt{-G}\pa_\nu\phi)
\]
and the fact that the metric $g$ is a perturbation of the Minkowski metric inside the cylinder $\{
r\leq \f12 R\}$.
\end{proof}

\bigskip

The equalities ~\eqref{commutatorom} and ~\eqref{commutatort} show
that in order to derive the energy estimates for $T\phi$ or
$\Om\phi$, we need to estimate the error term $\pa_{\a\b}\phi$.
Since $\Om g$, $T g$ are supported in $\{(t, x)||x|\leq
\frac{1}{2}R\}$, we use elliptic estimates to control the error
terms. For convenience, we may omit the summation sign.

\begin{lem}
\label{errorcontrol} Suppose $g$ satisfies the smallness condition
~\eqref{smallnesscond} or conditions $\mathcal{A}_1$ and
$\mathcal{A}_2$. Then
\begin{align}
\label{hessphibdI} \int_{\tau_1}^{\tau_2}\int_{r\leq \f12
R}|\pa_{\a\b}\phi|^2dxd\tau&\les E[\pa_t\phi](\tau_1)+
E[\phi](\tau_1)+D[\pa_t F]_{\tau_1}^{\tau_2}+D[F]_{\tau_1}^{\tau_2},
\end{align}
\begin{equation}
\label{hessphibd}
 \int_{r\leq R}|\pa_{\a\b}\phi|^2dx\les E[\pa_t\phi](\tau^+)+D[\pa_t F]_{\tau^+}^{\tau+R}+
  E[\phi](\tau^+)+ D[F]_{\tau^+}^{\tau+R},
\end{equation}
where $\tau^+=\max\{\tau-R, 0\}$ and $\pa_{\a\b}\phi$ is the second order derivative in Minkowski space.
\end{lem}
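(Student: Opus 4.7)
My plan is to split $|\pa_{\a\b}\phi|^2$ into \emph{mixed} derivatives (those involving at least one $\pa_t$) and \emph{purely spatial} second derivatives $|\pa_{ij}\phi|^2$. The mixed derivatives are handled directly by the integrated local energy inequality \eqref{morawetz2} applied to $\phi$, noting that $\pa_t\Box_g\phi = \pa_t F$. For the purely spatial derivatives I use elliptic regularity for the operator $L = g^{ij}\pa_{ij}$, which is uniformly elliptic on $\mathbb{R}^3$ by Proposition \ref{prop2}; from the identity \eqref{wavelap}, $L\phi$ can be rewritten as
\begin{equation*}
L\phi = F - g^{00}\pa_{tt}\phi - 2g^{0i}\pa_{ti}\phi - \text{(first-order terms)}\cdot\pa\phi,
\end{equation*}
so $|L\phi|^2 \les |F|^2 + |\pa\pa_t\phi|^2 + |\pa\phi|^2$.

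For \eqref{hessphibdI} I introduce a smooth spatial cutoff $\chi$ supported in $\{r\leq \f12 R\}$ and identically $1$ slightly inside, and apply the standard global $H^2$ elliptic estimate for $L$ (whose coefficients are smooth, uniformly elliptic, and agree with the flat Laplacian outside $\{r\leq \f12 R\}$) to the compactly supported function $\chi\phi$. Expanding $L(\chi\phi)=\chi L\phi+[L,\chi]\phi$ and using the bound on $|L\phi|^2$ above, this yields the pointwise-in-$\tau$ inequality $\int_{r\leq \f12 R}|\pa_{ij}\phi|^2\,dx \les \int_{r\leq\f12 R}(|F|^2+|\pa\pa_t\phi|^2+|\pa\phi|^2+|\phi|^2)\,dx$; any loss in the inner radius arising from the cutoff is absorbed into the constant. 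Integrating in $\tau\in[\tau_1,\tau_2]$ and inserting \eqref{morawetz2} for the spacetime integral of $|\pa\pa_t\phi|^2$, the integrated bound \eqref{ILE0} of Proposition \ref{ILEthm} for $|\pa\phi|^2$, Hardy's inequality \eqref{phiboundn} for $|\phi|^2$, and the trivial bound by $D[F]$ for $F$, then gives \eqref{hessphibdI}.

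For the pointwise-in-time estimate \eqref{hessphibd}, I perform the analogous elliptic estimate at a single time $\tau$ on the ball $\{r\leq R\}$; since $g$ is already Minkowski for $r>\f12 R$, enlarging the elliptic domain by a small amount poses no issue. The first-order quantities $\int|\pa\phi(\tau)|^2\,dx$ and $\int|\phi(\tau)|^2\,dx$ are dominated by $E[\phi](\tau)$ (using Hardy), which by \eqref{eb} is bounded by $E[\phi](\tau^+)+D[F]_{\tau^+}^{\tau}$. The term $\int|\pa\pa_t\phi(\tau)|^2\,dx$ is bounded by $E[\pa_t\phi](\tau)$; using \eqref{eb} applied to $\pa_t\phi$ together with $\Box_g\pa_t\phi=\pa_t F+[\Box_g,\pa_t]\phi$ and the form of the commutator in \eqref{commutatort}, the remaining task is to bound $\int_{\tau^+}^{\tau}\int_{r\leq\f12 R}|\pa\pa\phi|^2$, which is precisely what \eqref{hessphibdI} supplies. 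For the remaining pointwise-in-time quantity $\int_{r\leq R}|F(\tau)|^2\,dx$, I use a mean-value argument on the interval $[\tau^+,\tau+R]$ of length at least $R$: select $s^*$ in this interval with $\int|F(s^*)|^2\,dx\les R^{-1}D[F]_{\tau^+}^{\tau+R}$, and then Cauchy--Schwarz applied to $\int|F(\tau)|^2-\int|F(s^*)|^2 = 2\int_{s^*}^{\tau}\int F\pa_t F$ produces the bound $D[F]+D[\pa_t F]$. The main obstacle is the potential circularity in this step: controlling $E[\pa_t\phi](\tau)$ demands $D[\Box_g\pa_t\phi]$, whose commutator piece contains second derivatives of $\phi$; this is resolved because the commutator is localised to $\{r\leq \f12 R\}$, so \eqref{hessphibdI} closes the loop, and the two estimates must therefore be proved in the stated order.
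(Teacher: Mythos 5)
Your overall architecture (elliptic estimates for the purely spatial second derivatives, an integrated local energy bound for the mixed derivatives $\pa\pa_t\phi$, and proving \eqref{hessphibdI} before \eqref{hessphibd} so that the commutator $[\Box_g,\pa_t]\phi$, localised in $\{r\leq\f12 R\}$, can be absorbed when estimating $E[\pa_t\phi]$) is the same as the paper's. But there is a genuine gap: you handle the mixed derivatives in \emph{both} cases by invoking \eqref{morawetz2}, whereas \eqref{morawetz2} is an assumption only in the $\mathcal{A}1$--$\mathcal{A}2$ case. Under the smallness condition \eqref{smallnesscond} the paper has only established the analogue of \eqref{morawetz1} (namely \eqref{ILE}, \eqref{ILEH2}); \eqref{morawetz2} is precisely what must be \emph{proved} there, and it cannot be quoted. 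The paper's argument in that case commutes the equation with $\pa_t$, applies \eqref{ILEH2} to $\pa_t\phi$ with source $\Box_g\pa_t\phi=\pa_t F-\pa_t g^{\a\b}\pa_{\a\b}\phi-\cdots$, and then absorbs the resulting term, bounded by $C_g\,17H^2(1+\f12 R)^{\a+1}\int_{\tau_1}^{\tau_2}\int_{r\leq\f12 R}|\pa_{\a\b}\phi|^2$, back into the left-hand side of \eqref{ellipticbdab}; this absorption works only because of the quantitative interplay between $H\leq\frac{\a}{700(1+\f12 R)^{\a+1}}$ and the explicit constant $C_g=18^2\b^2(1+R)^{\a+1}$ tracked in Proposition \ref{ILEthm}. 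The circularity you flag and resolve at the end concerns \eqref{hessphibd} only; the analogous circularity \emph{inside} \eqref{hessphibdI} in the smallness case (the commutator reintroduces exactly the quantity being estimated) is the heart of the proof and is not addressed in your proposal. Note also that the trick of writing $\pa_t\Box_g\phi=\pa_t F$ only helps when \eqref{morawetz2} is assumed; once you must commute, the relevant source is $\Box_g\pa_t\phi$, which differs from $\pa_t F$ by the problematic term.

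Two smaller points. First, the cutoff bookkeeping: a cutoff supported in $\{r\leq\f12 R\}$ and equal to $1$ only ``slightly inside'' yields the spatial-derivative bound only on the smaller set, not on all of $\{r\leq\f12 R\}$; the loss is in the domain, not in the constant. The fix (as in the paper) is a cutoff equal to $1$ on $\{r\leq\f12 R\}$ and supported in $\{r\leq R\}$, which then requires controlling the elliptic right-hand side, including mixed derivatives, on the larger ball --- in the smallness case this is supplied by \eqref{ILEH2} applied to $\pa_t\phi$ over $r\leq R$, again pointing back to the absorption argument. Second, for \eqref{hessphibd} you bound the fixed-time quantities on the enlarged elliptic domain by $E[\phi](\tau)$ and $E[\pa_t\phi](\tau)$, but the annulus $\{R\leq r\leq cR\}$ at time $t=\tau$ does not lie on $\Si_\tau$, so these slice integrals are not directly dominated by the energy flux through $\Si_\tau$; this is exactly why the paper runs a separate energy identity on the region bounded by $\Si_{\tau-R}$ and $\{t=\tau\}$ (the source of $\tau^+$ in the statement, cf.\ \eqref{energyineq2R} and \eqref{ttphibd}). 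Your mean-value-in-time treatment of $\int_{r\leq 2R}|F(\tau)|^2dx$ matches the paper's \eqref{Fbd}, provided the auxiliary time $s^*$ is chosen in $[\tau,\tau+R]$ so that the relevant spacetime region is covered by the leaves $\Si_s$ with $s\in[\tau^+,\tau+R]$.
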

Inequality ~\eqref{hessphibdI} will be used when we commute $\Box_g$ with $\Om$ or $T$ in order to obtain the energy decay for $\Om\phi$ or $T\phi$. It shows that the error terms coming from commutations are under control. Inequality ~\eqref{hessphibd} is useful for deriving the pointwise decay of the solution when $r\leq R$.
\begin{proof}
When $g$ satisfies the smallness condition \eqref{smallnesscond},
the error term can be absorbed because of the smallness assumption.
For the other case, we show that the inequality ~\eqref{hessphibdI}
can be reduced to the condition ~\eqref{morawetz2}.

\bigskip

If $g$ satisfies condition \eqref{smallnesscond}, choose a smooth
cut off function $\chi(x)$, such that
\begin{equation*}
\chi(x)=
\begin{cases}
1, \quad |x|\leq\f12 R,\\
0, \quad |x|\geq R,
\end{cases}
\end{equation*}
and $|\chi|\leq 1$, $|\nabla\chi|\leq\frac{4}{R}$, $|\Delta
\chi|\leq \frac{8}{R^2}$, where we denote $\nabla=(\pa_1,\pa_2,
\pa_3)$. Using the inequality
\[
 (a+b)^2\leq (1+p)a^2+(1+\frac{1}{p})b^2,
\]
we obtain
\begin{align*}
 &\quad\int_{r\leq \f12 R}|\pa_{ij}\phi|^2dx\leq \int_{r\leq R}|\pa_{ij}(\chi\phi)|^2dx=\int_{r\leq R}|\Delta(\chi\phi)|^2dx\\
&\leq \frac{6}{5}\int_{r\leq R}|\Delta\phi|^2dx +C_{R}\int_{r\leq R}|\phi|^2+|\nabla\phi|^2dx\\
&\leq \frac{3}{2}\int_{r\leq R}|\sum g^{ij}\pa_{ij}\phi|^2dx+6\int_{r\leq R}|\sum h^{ij}\pa_{ij}\phi|^2dx+C_R\int_{r\leq R}|\phi|^2+|\nabla\phi|^2dx\\
&\leq \frac{3}{2}\int_{r\leq R}|\sum
g^{ij}\pa_{ij}\phi|^2dx+54H^2\int_{r\leq \f12 R}|\pa_{ij}\phi|^2dx
+C_R\int_{r\leq R}|\phi|^2+|\nabla\phi|^2dx,
\end{align*}
where $C_R$ is a constant depending only on $R$. Recall that $H\leq
\frac{\a}{700(1+\f12 R)^{\a+1}}$. Absorbing the second term, we have
\begin{equation}
 \label{ellipticboundc}
\int_{r\leq \f12 R}|\pa_{ij}\phi|^2dx\leq \frac{8}{5}\int_{r\leq
R}|\sum g^{ij}\pa_{ij}\phi|^2dx + C_R\int_{r\leq
R}|\phi|^2+|\nabla\phi|^2dx.
\end{equation}
Then from equation ~\eqref{wavelap}, we have
\begin{equation*}
 |\sum g^{ij}\pa_{ij}\phi|^2\leq \frac{5}{4}|\pa_{\a t}\phi|^2+
 C|\Box_g\phi|^2+C|\pa\phi|^2.
\end{equation*}
Add $2|\pa_{\a t}\phi|^2$ to both sides of ~\eqref{ellipticboundc}
and then integrate from $\tau_1$ to $\tau_2$. Then the integrated
local energy inequality ~\eqref{ILE} implies that
\begin{equation}
\label{ellipticbdab}
 \int_{\tau_1}^{\tau_2}\int_{r\leq \f12 R}|\pa_{\a\b}\phi|^2dx\leq 4\int_{\tau_1}^{\tau_2}
 \int_{r\leq R}|\pa_{\a t}\phi|^2dx +
 C_R\left(E[\phi](\tau_1)+D[F]_{\tau_1}^{\tau_2}\right).
\end{equation}
We have to bound the first term which can be estimated by the integrated local energy inequality for $\pa_t\phi=T\phi$.
Commute the equation \eqref{THEWAVEEQ} with
the vector field $\pa_t$. Using the identity ~\eqref{commutatort},
we obtain
\begin{align*}
|\Box_g\pa_t\phi|^2&\leq \frac{17}{16}|\sum\pa_t g^{\a\b}\pa_{\a\b}\phi|^2+C |\pa_t\Box_g\phi|^2+C|\pa\phi|^2\\
    &\leq 17 H^2
    |\pa_{\a\b}\phi|^2+C|\pa_t\Box_g\phi|^2+C|\pa\phi|^2
\end{align*}
for $r\leq \frac{1}{2}R$. Denote $C_{g}=18^2\b^2(1+R)^{\a+1}$. By
~\eqref{ILEH2}, we have
\begin{align*}
\int_{\tau_1}^{\tau_2} \int_{r\leq R}|\pa_{\a t}\phi|^2dxd\tau&\leq C_g E[\pa_t\phi](\tau_1)+C_g D[\Box_g\pa_t\phi]_{\tau_1}^{\tau_2}\\
&\leq C_g E[\pa_t\phi](\tau_1) + C D[\pa_t F]_{\tau_1}^{\tau_2}+ C \int_{\tau_1}^{\tau_2} \int_{r\leq \f12 R}|\pa\phi|^2dxd\tau\\
& \quad+C_g 17 H^2(1+\frac{1}{2}R)^{\a+1}\int_{\tau_1}^{\tau_2} \int_{r\leq \f12 R}|\pa_{\a\b}\phi|^2dxd\tau\\
&\leq C_g E[\pa_t\phi](\tau_1) + C D[\pa_t F]_{\tau_1}^{\tau_2}+ C E[\phi](\tau_1)+C D[F]_{\tau_1}^{\tau_2}\\
& \quad+ C_g 17 H^2(1+\f12 R)^{\a+1}\int_{\tau_1}^{\tau_2}
\int_{r\leq \f12 R}|\pa_{\a \b}\phi|^2dxd\tau.
\end{align*}
Plug this into \eqref{ellipticbdab}. Notice that
 $$4C_g 17 H^2(1+\f12 R)^{\a+1}\leq\frac{2^{1+\a}\times68\times 18^2\times (\a\b)^2}{700^2}<1.$$
We thus have shown the estimate ~\eqref{hessphibdI} by absorbing the
first term $\int|\pa_{\a\b}\phi|^2dxd\tau$ on the right hand side.

\bigskip
For the case when $g$ satisfies conditions $\mathcal{A}1$ and
$\mathcal{A}2$, Proposition~\ref{prop2} shows that $(g^{ij})$ is
uniformly elliptic. Thus by elliptic estimates, together with the
conditon ~\eqref{morawetz2}, we obtain
\begin{equation*}
 \begin{split}
  \int_{\tau_1}^{\tau_2}\int_{r\leq \f12 R}|\pa_{\a\b}\phi|^2dxd\tau
&\les \int_{\tau_1}^{\tau_2}\int_{r\leq \f12 R}|\pa_{\a t}\phi|^2dxd\tau +\int_{\tau_1}^{\tau_2}\int_{r\leq R}|\sum g^{ij}\pa_{ij}\phi|^2+\phi^2dxd\tau\\
&\les \int_{\tau_1}^{\tau_2}\int_{r\leq \f12 R}|\pa_{\a t}\phi|^2dxd\tau +E[\phi](\tau_1)+D[F]_{\tau_1}^{\tau_2}\\
&\les  E[\pa_t\phi](\tau_1) +  D[\pa_t F]_{\tau_1}^{\tau_2}+
E[\phi](\tau_1)+ D[F]_{\tau_1}^{\tau_2}.
 \end{split}
\end{equation*}
In any case, we have proven ~\eqref{hessphibdI}.

\bigskip

Now, we use ~\eqref{hessphibdI} to prove ~\eqref{hessphibd}. Since
$(g^{ij})$ is uniformly elliptic as having shown in Proposition
~\ref{prop2}, elliptic estimates together with the estimate
\eqref{wavelap} imply that
\begin{align}
\notag
\int_{r\leq R}|\pa_{\a\b}\phi|^2dx &\les\int_{r\leq R}|\pa_{\a t}\phi|^2dx+\int_{r\leq 2R}|g^{ij}\pa_{ij}\phi|^2 +|\phi|^2dx\\
\label{Htwobd} &\les E[\pa_t\phi](\tau)+E[\phi](\tau)+\int_{r\leq
2R}|F|^2+|\pa_{tt}\phi|^2+|\phi|^2dx,
\end{align}
where we recall that $h^{\a\b}$ are supported in $r\leq \f12 R$. It remains to bound the integral on the larger ball $r\leq 2R$.
We first consider the case when $\tau\geq R$. Take $\tau_1=\tau-R$ and $\tau_2=\tau+R$ in ~\eqref{ILE0}. We have
$$\int_{\tau}^{\tau+R}\int_{r\leq 2R}|\pa_t\phi|^2+\phi^2 dxdt \les \int_{\tau-R}^{\tau+R}\int_{\Si_\tau}
\frac{|\pa\phi|^2+|\frac{\phi}{1+r}|^2}{(1+r)^{1+\a}}d\vol\les
E[\phi](\tau-R)+D[F]_{\tau-R}^{\tau+R}.
$$
Therefore, using Sobolev embedding, we have
\begin{equation}
 \label{phibd}
\left.\int_{r\leq 2R}\phi^2dx\right|_{\tau}\les
E[\phi](\tau-R)+D[F]_{\tau-R}^{\tau+R}.
\end{equation}
Similarly we have
\begin{equation}
 \label{Fbd}
\left.\int_{r\leq 2R}|F|^2dx\right|_{\tau}\les
D[F]_{\tau-R}^{\tau+R}+D[\pa_t F]_{\tau-R}^{\tau+R}.
\end{equation}
We claim that
\begin{equation}
 \label{ttphibd}
\int_{r\leq 2R}|\pa_{tt}\phi|^2dx\les E[\pa_t\phi](\tau-R)+D[\pa_t F]_{\tau-R}^{\tau}+
 E[\phi](\tau-R)+ D[F]_{\tau-R}^{\tau}.
\end{equation}
In fact, consider the region bounded by $\Si_{\tau-R}$ and $t=\tau$.
Take $X=T$ in ~\eqref{energyeq}. We get
\begin{equation}
\label{energyineq2R}
\begin{split}
\int_{r\leq 2R}J^T_\mu[\phi]n^\mu d\si&=\int_{\Si_{\tau-R}\cap
\{t\leq \tau\}}J^T_\mu[\phi]n^\mu d\si-
\int_{\tau-R}^{\tau}\int_{r\leq R +t-\tau}F\pa_t\phi +
K^T[\phi]d\vol.
\end{split}
\end{equation}
Notice that the metric is flat when $r\geq \f12 R$. We can estimate
\[
 \int_{\tau-R}^{\tau}\int_{r\leq R +t-\tau}|K^T[\phi]|d\vol\les\int_{\tau-R}^{\tau}\int_{r\leq R}
 |\pa\phi|^2dxdt\les E[\phi](\tau-R)+D[F]_{\tau-R}^{\tau}.
\]
Using Cauchy-Schwartz inequality, we obtain
\begin{equation*}
 \begin{split}
  \int_{\tau-R}^{\tau}\int_{r\leq R +t-\tau}|F\pa_t\phi|d\vol&\les\int_{\tau-R}^{\tau}\int_{\Si_t}|F|^2(1+r)^{\a+1}+\frac{(\pa_t\phi)^2}{(1+r)^{\a+1}}dxdt\\
&\les E[\phi](\tau-R)+D[F]_{\tau-R}^{\tau}.
 \end{split}
\end{equation*}
In any case, $g$ satisfies the condition $\mathcal{A}_1$. Hence by
Proposition ~\ref{prop2}, we can show that
$$\int_{r\leq 2R}|\pa_t\phi|^2 dx\les\int_{r\leq 2R}J^T_\mu[\phi]n^\mu
d\si
$$
and
\[
 \int_{\Si_{\tau-R}\cap \{t\leq \tau\}}J^T_\mu[\phi]n^\mu d\si\les\int_{\Si_{\tau-R}}J^T_\mu[\phi]n^\mu d\si\les
  E[\phi](\tau-R).
\]
Replace $\phi$ with $\pa_t\phi$ in ~\eqref{energyineq2R}. Then
~\eqref{hessphibdI} implies that
\begin{align*}
&\int_{r\leq 2R}|\pa_{tt}\phi|^2 dx\les E[\pa_t\phi](\tau-R)+D[\Box_g\pa_t\phi]_{\tau-R}^{\tau}\\
&\les E[\pa_t\phi](\tau-R) + D[\pa_t F]_{\tau-R}^{\tau}+\int_{\tau-R}^{\tau}\int_{r\leq \f12 R}|\pa_{\a\b}\phi|^2+|\pa\phi|^2dxdt\\
&\les E[\pa_t\phi](\tau-R)+D[\pa_t F]_{\tau-R}^{\tau}+
E[\phi](\tau-R)+ D[F]_{\tau-R}^{\tau}.
\end{align*}
The inequality ~\eqref{hessphibd} then follows from ~\eqref{Htwobd},
~\eqref{phibd}, ~\eqref{Fbd}, ~\eqref{ttphibd} and ~\eqref{eb}.

When $\tau\leq R$, notice that the initial data are supported in
$r\leq R$. We conclude that the solution $\phi$ is supported in
$\{r\leq \tau +R\}$. And the above inequalities still hold when
replacing $\tau-R$ with 0.
\end{proof}

\begin{remark}
 The proof for the smallness assumption case shows that the condition ~\eqref{morawetz2}
 can be replaced by assuming that the deformation tensor $\pi^{T}_{\a\b}$ is sufficiently small.
 This nevertheless still allows $g$ to be far away from Minkowski metric.
\end{remark}

Having proven the above lemma, we now establish the main proposition
in this section.

\begin{prop}
\label{energydecay}
 Suppose $F$ satisfies the conditions in Proposition \ref{prop4}. Then on $\Si_\tau$, for all $\a<\delta\leq
 1$, we have
  \begin{eqnarray}
 \label{pbout}
\int_{\om}|r\phi|^2d\om\les(1+\tau)^{-1+\delta}\left(\ep^2 E_0 + C_1\right), \quad\quad r\geq R ,\\
\label{pbout1}
\int_{\om}r|\phi|^2d\om\les(1+\tau)^{-2+\a}\left(\ep^2 E_0 +
C_1\right), \quad\quad r\geq  R.
\end{eqnarray}
If in addition,
\begin{description}
\item[(a)] $\pa_t F$ satisfies the same conditions in Proposition
~\ref{prop4};
\item[(b)] $E[\pa_t\phi](\tau)\les\left(\ep^2 E_0 + C_1\right)(1+\tau)^{-2+\a},$
\end{description}
then
\begin{equation}
 \label{pbin}
|\phi|^2\les(1+\tau)^{-2+\a}\left(\ep^2 E_0 + C_1\right), \quad\quad
r\leq R.
\end{equation}

\end{prop}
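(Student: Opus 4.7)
The proposition has three parts. The outer-region bound \eqref{pbout1} is immediate: Lemma \ref{lem1} (as extended to $E[\phi]$ via Remark 1) gives $r\int_\om |\phi|^2 d\om \leq E[\phi](\tau)$ on $S_\tau$, and Proposition \ref{prop4} supplies the decay $E[\phi](\tau) \les (\ep^2 E_0 + C_1)(1+\tau)^{-2+\a}$. The inner-region bound \eqref{pbin} is a Sobolev embedding argument: in three spatial dimensions $H^2(B_R) \hookrightarrow L^\infty(B_R)$, so
\[
|\phi(\tau, x)|^2 \les \int_{r \leq R}\left(|\phi|^2 + |\pa\phi|^2 + |\pa_{\a\b}\phi|^2\right)dx.
\]
The first two integrals are controlled by $E[\phi](\tau)$ via Lemma \ref{lem2}, and the Hessian is controlled by estimate \eqref{hessphibd} of Lemma \ref{errorcontrol}. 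The additional hypotheses (decay of $E[\pa_t\phi]$ and conditions (a)--(c) of Proposition \ref{prop4} for $\pa_t F$) ensure all four ingredients of \eqref{hessphibd} decay at rate $(\ep^2 E_0 + C_1)(1+\tau)^{-2+\a}$, yielding \eqref{pbin}.

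The main estimate is \eqref{pbout}. Setting $\psi = r\phi$, the argument in the proof of Lemma \ref{lem1} shows $\psi$ vanishes at null infinity, so
\[
\psi(\tau, v, \om) = -\int_v^\infty \pa_v\psi(\tau, v', \om)\,dv',
\]
and Cauchy-Schwartz with weight $r^p$, $p>1$, yields
\[
\int_\om |r\phi|^2 d\om \leq \frac{r^{1-p}}{p-1}\int_{S_\tau}r^p \psi_v^2\,dv\,d\om.
\]
I would choose $p = 1 + \delta - \a$; this is $> 1$ precisely because $\delta > \a$, and $\leq 2-\a$ because $\delta \leq 1$. Using $r \geq R$, the problem reduces to showing
\[
\int_{S_\tau}r^{1+\delta-\a}\psi_v^2\,dv\,d\om \les (\ep^2 E_0 + C_1)(1+\tau)^{-1+\delta}.
\]
On the dyadic sequence $\{\tau_n\}$ from the proof of Proposition \ref{prop4}, H\"older interpolation with parameter $\theta=\delta$ between $\int_{S_{\tau_n}}r^{2-\a}\psi_v^2 \les \ep^2 E_0 + C_1$ from \eqref{pro:a} and $\int_{S_{\tau_n}}r^{1-\a}\psi_v^2 \les (\ep^2 E_0 + C_1)(1+\tau_n)^{-1}$ from \eqref{pro:1a} produces exactly this decay at $\tau_n$. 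To propagate to arbitrary $\tau \in [\tau_n, \tau_{n+1}]$, I would apply the $p$-weighted inequality \eqref{pWEineq} on $[\tau_n, \tau]$: the $E[\phi](\tau_n)$ and $D[F]$ contributions decay at the faster rate $(1+\tau_n)^{-2+\a}$ and are absorbed, while the remaining term $\int r^{2+\delta-\a}|F|^2$ is estimated by H\"older interpolation between hypotheses (a) and (b) of Proposition \ref{prop4}, producing the matching decay $\les C_1(1+\tau_n)^{-1+\delta}$.

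The main obstacle is this final interpolation argument. The restriction $\a < \delta \leq 1$ is essentially forced: $\delta > \a$ is needed to make the $r^{-p}$ integral from null infinity convergent ($p > 1$), while $\delta \leq 1$ keeps $p$ within the range $(0,2]$ where the $p$-weighted energy inequality \eqref{pWEineq} applies. The loss as $\delta \to \a$ is reflected in the constant $\les R^{\a-\delta}/(\delta-\a)$ produced by the Hardy-type step, which blows up exactly as in the $\delta$-dependent constant of Theorem \ref{maintheorem}. The remaining work is bookkeeping: verifying that the $F$-interpolation indeed produces the matching decay $(1+\tau_n)^{-1+\delta}$, and that the propagation from dyadic to general $\tau$ preserves this rate.
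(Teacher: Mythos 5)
Your treatment of \eqref{pbout1} and \eqref{pbin} matches the paper's: Lemma~\ref{lem1} plus Proposition~\ref{prop4} for the former, and Sobolev/elliptic control of $\int_{r\leq R}\phi^2+|\pa_{\a\b}\phi|^2dx$ via \eqref{phiboundn} and \eqref{hessphibd} for the latter, with the extra hypotheses (a), (b) supplying the decay of $E[\pa_t\phi]$ and $D[\pa_t F]$. For the flux bound $\int_{S_\tau}r^{1+\delta-\a}\psi_v^2\,dvd\om\les(\ep^2E_0+C_1)(1+\tau)^{-1+\delta}$ your route (interpolate \eqref{pro:a} against \eqref{pro:1a} at dyadic times, then rerun \eqref{pWEineq} with $p=1+\delta-\a$ on $[\tau_n,\tau]$, estimating $\int r^{2+\delta-\a}|F|^2$ by interpolating hypotheses (a) and (b) of Proposition~\ref{prop4}) is workable and the exponents do come out right; the paper is more direct, observing that \eqref{pro:1t} holds for every $\tau$, hence $\int_{S_\tau}r\psi_v^2\les(\ep^2E_0+C_1)(1+\tau)^{-1+\a}$ for all $\tau$ (estimate \eqref{stauenergy1}), and interpolating that against \eqref{pro:a} pointwise in $\tau$, with no second pass through the $p$-weighted inequality.

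The genuine gap is the opening claim that $\psi=r\phi$ vanishes at null infinity, which is what licenses $\psi(\tau,v,\om)=-\int_v^\infty\pa_v\psi\,dv'$. Lemma~\ref{lem1} shows only that $\phi$ vanishes on $\mathcal I$: the decisive ingredient there is the factor $\left.\int_0^{u_\tau}r^{-2}du\right|_{v=\infty}=0$ multiplying the finite flux $\int_{\mathcal I}\phi_u^2\,d\si$. For $\psi$ one has $\pa_u\psi=r\pa_u\phi-\phi$, so $\int_{\mathcal I}(\pa_u\psi)^2dud\om$ is merely finite and no vanishing weight is available; the argument then only yields that $\psi$ has a finite limit along $\mathcal I$, namely the radiation field, which is generically nonzero even for the free wave equation with compactly supported data (spherically symmetric solutions $\phi=(f(t-r)-f(t+r))/r$ have $\psi\to f(u)$). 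Nor can you simply bound $\int_\om\psi^2$ at $v=\infty$ by the desired rate, since that is precisely the $r\to\infty$ content of \eqref{pbout} and would be circular at this stage. The repair is exactly the paper's step: integrate $\pa_v\psi$ \emph{outward} from $v=v_\tau$ (where $r=R$) and control the boundary term by the already-proven \eqref{pbout1}, i.e. $\int_\om\psi^2(\tau,v_\tau,\om)d\om\les R\,(1+\tau)^{-2+\a}(\ep^2E_0+C_1)$; your Cauchy--Schwartz computation then goes through unchanged, since $\int_{v_\tau}^{v}r^{-1-\delta+\a}dv'\leq R^{\a-\delta}/(\delta-\a)$, and this is also where the $\delta$-dependence of the constant actually enters.
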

The estimate ~\eqref{pbin} is stronger than ~\eqref{pbout} and
~\eqref{pbout1} due to the extra condition $(b)$ together with the
robust elliptic estimates on compact region.

\begin{proof} Inequality ~\eqref{pbout1} follows from Lemma ~\ref{lem1} and Proposition ~\ref{prop4}.
We use the p-weighted energy inequalities to prove ~\eqref{pbout}.
Estimate ~\eqref{pro:1t} implies that
\begin{equation}
\label{stauenergy1} \int_{S_\tau}r(\pa_v\psi)^2d\om dv
\les(1+\tau)^{-1+\a}\left(\ep^2 E_0 + C_1\right),\quad\forall
\tau\geq 0.
\end{equation}
Interpolate with ~\eqref{pro:a}. We get
$$\int_{S_\tau}r^{1+\delta-\a}(\pa_v\psi)^2d\om
dv\les(1+\tau)^{-1+\delta}\left(\ep^2 E_0 + C_1\right),\quad\forall
\a<\delta \leq 1.
$$
Hence using ~\eqref{pbout1}, we can estimate
\begin{align*}
\quad\int_{\om}|\psi|^2(\tau,v,\om)d\om
&\les\int_{\om}|\psi|^2(\tau, v_\tau, \om)d\om +\left(\int_{v_\tau}^v\int_{\om}|\pa_v\psi|d\om dv\right)^2\\
                    &\les(1+\tau)^{-2+\a}\left(\ep^2 E_0 + C_1\right) +  \int_{v_\tau}^v\int_{\om}r^{1+\delta-\a}|\pa_v\psi|^2d\om dv\int_{v_\tau}^v r^{-1-\delta+\a}dv\\
                    &\les(1+\tau)^{-1+\delta}\left(\ep^2 E_0 +
                    C_1\right),
\end{align*}
where $v=\frac{r+\tau}{2}$ and $\delta>\a$.

\bigskip

To prove ~\eqref{pbin}, using\eqref{hessphibd}, ~\eqref{phiboundn},
we show that for $r\leq R$
\begin{align*}
 |\phi|^2&\les \int_{r\leq R}|\pa_{ij}\phi|^2+\phi^2dx\\
&\les E[\pa_t\phi](\tau^+)+D[\pa_t F]_{\tau^+}^{\tau+R}+ E[\phi](\tau^+)+ D[F]_{\tau^+}^{\tau+R}\\
&\les(1+\tau^+)^{-2+\a}\left(\ep^2 E_0 + C_1\right)\\
&\les (1+\tau)^{-2+\a}\left(\ep^2 E_0 + C_1\right),
\end{align*}
where $\tau^+=\max\{\tau-R, 0\}$.
\end{proof}

\bigskip

To obtain the poitwise decay of the solution, we need to estimate
$E[\Om\phi](\tau)$, $E[\pa_t\phi](\tau)$. Since $\Box_g$ does not
commute with the vector fields $\Om$, $T$, we use elliptic estimates
to control the errors coming from commutation. We establish a
proposition that gives the decay of the energy
$E[\Om\phi](\tau)$, $E[\pa_t\phi](\tau)$.
\begin{prop}
\label{VectorDecay}
 Let the vector field $X$ be $\pa_t$ or $\Om$. Assume that $F$, $X(F)$
 satisfy conditions $(a), (b)$ and $(c)$(without $E[\phi](\tau_1)$ on the right hand side) in Proposition \ref{prop4}
.
\begin{itemize}
 \item[(1)] If $X=\pa_t$, then
\begin{equation}
\label{Tdecay}
 E[\pa_t\phi](\tau)\les (1+\tau)^{-2+\a}\left(\ep^2 E_0+C_1\right).
\end{equation}
\item[(2)] If $X=\Om$ and $$E[\pa_t\phi](\tau)\les (1+\tau)^{-2+\a}\left(\ep^2 E_0+C_1\right),$$ then
\begin{equation}
\label{Omdecay}
 E[\Om\phi](\tau)\les (1+\tau)^{-2+\a}\left(\ep^2 E_0+C_1\right).
\end{equation}
\end{itemize}
Furthermore, in both cases $\Box_g X(\phi)$ satisfies conditions in
Proposition ~\ref{prop4}, with a new constant $C (C_1+\ep^2 E_0)$,
where $C$ does not depend on $F$ or $\phi$.
\end{prop}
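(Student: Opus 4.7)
The plan is to apply Proposition~\ref{prop4} with $X\phi$ in the role of $\phi$, so I must verify that $\Box_g(X\phi)$ satisfies conditions (a), (b), (c) of that proposition with constant $C(C_1+\ep^2 E_0)$, and then read off the decay of $E[X\phi]$. The initial energies come for free: for $X=\pa_t$ the pair $(0,1)\in A$, and for $X=\Om$ the pair $(1,0)\in A$, so $E[X\phi](0)\les \ep^2 E_0$.

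By Lemma~\ref{propofwaveop}, $\Box_g(X\phi)=X(F)+[\Box_g,X]\phi$, and in both cases the commutator has the schematic form $f^{\a\b}\pa_{\a\b}\phi+f^{\b}\pa_\b\phi$ with coefficients of size $O(H)$ supported in $\{r\leq\f12 R\}$. Because $S_\tau\subset\{r\geq R\}$, the commutator vanishes on $S_\tau$ and $\Box_g(X\phi)=X(F)$ there, so conditions (a) and (b) for $\Box_g(X\phi)$ reduce immediately to the hypothesized (a) and (b) for $X(F)$.

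The heart of the proof is condition (c). One splits
\[
D[\Box_g(X\phi)]_{\tau_1}^{\tau_2}\les D[X(F)]_{\tau_1}^{\tau_2}+\int_{\tau_1}^{\tau_2}\int_{r\leq\f12 R}\bigl(|\pa_{\a\b}\phi|^2+|\pa\phi|^2\bigr)\,dx\,d\tau.
\]
The integral of $|\pa\phi|^2$ on the compact region is dominated via \eqref{ILE0} using the decay $E[\phi](\tau)\les(\ep^2 E_0+C_1)(1+\tau)^{-2+\a}$ provided by Proposition~\ref{prop4} applied to $\phi$ itself. For case (1) ($X=\pa_t$), the spacetime $L^2$ integral of $|\pa_{\a\b}\phi|^2$ is exactly what \eqref{hessphibdI} bounds, with right-hand side consisting of $E[\pa_t\phi](\tau_1)$ (which plays the role of $E[\phi](\tau_1)$ in condition (c) and is thus absorbable by the bootstrap inside Proposition~\ref{prop4}), $E[\phi](\tau_1)$, $D[\pa_t F]_{\tau_1}^{\tau_2}$ (controlled by the hypothesis on $X(F)=\pa_t F$), and $D[F]_{\tau_1}^{\tau_2}$. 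For case (2) ($X=\Om$), no such hypothesis on $\pa_t F$ is available, so instead I exploit the decay hypothesis on $E[\pa_t\phi]$ together with the uniform ellipticity of $(g^{ij})$ from Proposition~\ref{prop2}: solving the identity \eqref{wavelap} for $g^{ij}\pa_{ij}\phi$ and applying an elliptic estimate inside $\{r\leq\f12 R\}$ yields a pointwise-in-$\tau$ bound
\[
\int_{r\leq\f12 R}|\pa_{\a\b}\phi|^2\,dx\les \int_{r\leq R}|F|^2\,dx+E[\pa_t\phi](\tau)+E[\phi](\tau),
\]
which I then integrate in $\tau$ and combine with the decay of $E[\pa_t\phi]$, $E[\phi]$ and $D[F]$.

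Once conditions (a), (b), (c) for $\Box_g(X\phi)$ are established with constant $C(C_1+\ep^2 E_0)$, Proposition~\ref{prop4} yields \eqref{Tdecay} and \eqref{Omdecay}. The \emph{furthermore} assertion is then immediate, since the very inequalities established above for $D[\Box_g(X\phi)]$ are the required conditions with the stated constant, now without the bootstrap loop because the decay of $E[X\phi]$ has already been secured. The main obstacle is case (2): in the absence of direct control on $D[\pa_t F]$, one has to squeeze the bound on $\int_{\tau_1}^{\tau_2}\int_{r\leq\f12 R}|\pa_{\a\b}\phi|^2\,dx\,d\tau$ purely out of the elliptic regularity for $(g^{ij})$ and the hypothesized decay of $E[\pa_t\phi]$, and arranging the time weights so as to close with the sharp rate $(1+\tau_1)^{-2+\a}$ required by condition (c) is the delicate point.
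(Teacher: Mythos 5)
Your treatment of conditions (a), (b) (commutator supported in $\{r\leq \f12 R\}$, hence invisible on $S_\tau$) and your case (1) coincide with the paper: there the hessian term in $D[\Box_g \pa_t\phi]_{\tau_1}^{\tau_2}$ is controlled by the spacetime estimate \eqref{hessphibdI}, and the resulting $E[\pa_t\phi](\tau_1)$ on the right is precisely the admissible $E$-term in condition (c), so Proposition \ref{prop4} applies to $\pa_t\phi$.

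The gap is in case (2). Your substitute for \eqref{hessphibdI} --- a fixed-time elliptic bound $\int_{r\leq \f12 R}|\pa_{\a\b}\phi|^2dx \les \int_{r\leq R}|F|^2dx + E[\pa_t\phi](\tau)+E[\phi](\tau)$, integrated in $\tau$ --- cannot give condition (c). Integrating the hypothesized decay $(1+\tau)^{-2+\a}$ of $E[\pa_t\phi](\tau)+E[\phi](\tau)$ over $[\tau_1,\tau_2]$ yields only $(1+\tau_1)^{-1+\a}$, a full power short of the $(1+\tau_1)^{-2+\a}$ that condition (c) demands; with such an input, Proposition \ref{prop4} applied to $\Om\phi$ would give at best $E[\Om\phi](\tau)\les(1+\tau)^{-1+\a}(\ep^2E_0+C_1)$, not \eqref{Omdecay}. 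You flag exactly this as ``the delicate point,'' but it is not a matter of rearranging weights: any argument that uses the energies slice-by-slice and then integrates in $\tau$ must lose a power. The paper avoids the loss by using \eqref{hessphibdI} in both cases; that estimate comes from the integrated local energy (Morawetz) inequality \eqref{ILEH2} applied to $\pa_t\phi$, so the entire spacetime hessian integral is bounded by quantities anchored at the single time $\tau_1$, namely $E[\pa_t\phi](\tau_1)+E[\phi](\tau_1)+D[\pa_t F]_{\tau_1}^{\tau_2}+D[F]_{\tau_1}^{\tau_2}$, each already carrying the rate $(1+\tau_1)^{-2+\a}$ (the term $E[\pa_t\phi](\tau_1)$ being exactly what the case-(2) hypothesis supplies). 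Your stated reason for abandoning \eqref{hessphibdI} --- that no hypothesis on $\pa_t F$ is available when $X=\Om$ --- does not force the detour: in the only situation where the case-(2) hypothesis on $E[\pa_t\phi]$ is actually in force, i.e.\ after case (1) has been run as in Corollary \ref{VenergydecayCor}, $\pa_t F$ satisfies the same conditions, so $D[\pa_t F]_{\tau_1}^{\tau_2}\les C_1(1+\tau_1)^{-2+\a}$ is at your disposal. The same defect carries over to your ``furthermore'' assertion for $X=\Om$, since the inequality you would invoke there is the one with the insufficient rate.
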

\begin{proof}  Using the identities ~\eqref{commutatorom}, ~\eqref{commutatort}, we can write the equation for $X(\phi)$
\begin{equation*}
 \Box_g X\phi=X(F)+f^{\a\b}\pa_{\a\b}\phi+f^{\b}\pa_\b\phi.
\end{equation*}
Recall that the metric is flat when $r\geq \f12 R$. We infer that
$\Box_g X(\phi)$ satisfies conditions $(a)$ and $(b)$ in Proposition
~\ref{prop4}. We show that $\Box_g X(\phi)$ also satisfies condition
$(c)$. In fact, using the estimate ~\eqref{hessphibdI}, we can show that
\begin{equation}
\label{innerFbdE}
\begin{split}
 D[\Box_g X(\phi)]_{\tau_1}^{\tau_2}& \les D[X(F)]_{\tau_1}^{\tau_2}+\int_{\tau_1}^{\tau_2}\int_{r\leq \f12 R}|\pa_{\a\b}\phi|^2+|\pa\phi|^2dx d\tau\\
&\les D[X(F)]_{\tau_1}^{\tau_2} + D[\pa_t F]_{\tau_1}^{\tau_2}+ D[F]_{\tau_1}^{\tau_2}+ E[\pa_t\phi](\tau_1)+E[\phi](\tau_1)\\
&\les (C_1+\ep^2E_0)(1+\tau_1)^{-2+\a}+E[\pa_t\phi](\tau_1),
\end{split}
\end{equation}
where we have shown $E[\phi](\tau)\les (C_1+\ep^2
E_0)(1+\tau)^{-2+\a}$ by Proposition ~\ref{prop4}. Therefore
\begin{itemize}
 \item[(1)]  when $X=\pa_t$, the above inequality already  implies that $\Box_g X(\phi)$ satisfies the condition $(c)$.
 \item[(2)]  when $X=\Om$, the extra condition on $E[\pa_t\phi](\tau)$ also indicates that $\Box_g X(\phi)$ satisfies the condition $(c)$.
\end{itemize}
Once we have verified that $\Box_g X(\phi)$ satisfies conditions in
Proposition ~\ref{prop4}, we can conclude the estimates
~\eqref{Tdecay}, ~\eqref{Omdecay}.
\end{proof}
\bigskip

This Proposition shows that although $\Box_g$ does not commute with
$\pa_t$, the assumptions on the nonlinearity $F$ are sufficient to
obtain the decay of $E[\pa_t \phi](\tau)$. However, to prove the
decay of $E[\Om\phi](\tau)$, we need to show the decay of
$E[\pa_t\phi]$ first. The idea is that we first commute the equation
with $T$. Then pass the $T$ derivatives to $\Om$ derivatives.
\begin{cor}
\label{VenergydecayCor}
 Suppose $\Om^k T^j F$ satisfies the conditions in Proposition ~\ref{prop4} for all $(k, j)\in A$. Then
\begin{align}
 \label{Venergydecay}
 & E[\Om^k T^j\phi](\tau)\les \left(\ep^2 E_0 +
 C_1\right)(1+\tau)^{-2+\a}.
\end{align}
 \end{cor}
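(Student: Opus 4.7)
The plan is a double induction on $(k,j)\in A$: outer induction on $k\in\{0,1,\ldots,5\}$ and inner induction on $j\in\{0,\ldots,8-k\}$. At each step we apply Proposition \ref{VectorDecay} with $X=T$ to step up in $j$, or with $X=\Om$ to step up in $k$. The commutator identities \eqref{commutatorom} and \eqref{commutatort}, together with the elliptic bound of Lemma \ref{errorcontrol}, are the main tools for verifying the input hypotheses of Proposition \ref{VectorDecay} at each stage, since both $[\Box_g,T]$ and $[\Box_g,\Om]$ are second-order operators supported in $\{r\leq\f12 R\}$.

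For the base layer $k=0$, I would induct on $j$ and prove simultaneously that $E[T^j\phi](\tau)\les(\ep^2 E_0+C_1)(1+\tau)^{-2+\a}$ and that the source $\tilde F_j:=\Box_g(T^j\phi)$ satisfies the three conditions of Proposition \ref{prop4}. The base $j=0$ is Proposition \ref{prop4} applied directly to $\phi$, with input supplied by $(0,0)\in A$. For the step from $j-1$ to $j$, view $u:=T^{j-1}\phi$ as solving $\Box_g u=\tilde F_{j-1}$ and apply Proposition \ref{VectorDecay}(1). The nontrivial input to check is that $T\tilde F_{j-1}=\tilde F_j+[T,\Box_g]T^{j-1}\phi$ meets the conditions of Proposition \ref{prop4} (without the $E[\phi]$ term). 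On the null portion $S_\tau\subset\{r\geq R\}$ the commutator vanishes and $\tilde F_j$ reduces to $T^j F$, so conditions (a) and (b) follow from the assumption $(0,j),(0,j+1)\in A$. For condition (c) the interior contribution of the commutator is bounded by $\int\!\int_{r\leq\f12 R}|\pa_{\a\b}T^{j-1}\phi|^2$, which Lemma \ref{errorcontrol} expresses in terms of the energies and source $D$-norms of $T^{j-1}\phi$ and $T^j\phi$, all available from the inductive hypothesis or from the corollary's assumption.

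For $k\geq 1$, assume the decay holds for every $(k',j')\in A$ with $k'<k$, and induct inwardly on $j$. Apply Proposition \ref{VectorDecay}(2) with $X=\Om$ to $v:=\Om^{k-1}T^j\phi$; the additional hypothesis of case (2) is the decay of $E[\pa_t v]=E[\Om^{k-1}T^{j+1}\phi]$, which is supplied by the outer inductive hypothesis, since $(k-1,j+1)\in A$ (indeed $k-1+j+1=k+j\leq 8$ and $k-1\leq 4$). The verification of the input conditions on $\Box_g v$ and $\Om(\Box_g v)$ parallels the base-case argument, now with \eqref{commutatorom} playing the role of \eqref{commutatort}. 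The main obstacle throughout is precisely this bookkeeping: every $T$- or $\Om$-commutation with $\Box_g$ produces an interior second-order error whose $D$-control via Lemma \ref{errorcontrol} consumes one extra $T$-derivative of the iterate, and threading the induction strictly within the index set $A$ --- whose asymmetric cap $k\leq 5$ alongside $k+j\leq 8$ provides exactly the margin needed --- without invoking an estimate not yet proven is the crux.
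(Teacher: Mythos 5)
Your proposal is correct and follows essentially the same route as the paper: an induction over the index set $A$ in which the $k=0$ layer is handled by Proposition \ref{VectorDecay}(1) (commuting with $T$), each higher $k$ layer by Proposition \ref{VectorDecay}(2) (commuting with $\Om$) with the extra hypothesis $E[\Om^{k-1}T^{j+1}\phi]$ supplied by the previous layer, and the propagation of the source conditions through commutators controlled via \eqref{commutatorom}, \eqref{commutatort} and Lemma \ref{errorcontrol}. The only difference is that the paper delegates the bookkeeping of the commuted source (that $\Box_g X(\phi)$ again satisfies the hypotheses of Proposition \ref{prop4}) to the final statement of Proposition \ref{VectorDecay}, whereas you re-verify it by hand.
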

\begin{proof} We prove by induction on $k$. For $k=0$, $j\leq 8$, the estimate \eqref{VenergydecayCor} follows from the
first case in Proposition ~\ref{VectorDecay}. Then the second case
in Proposition ~\ref{VectorDecay} implies that
\eqref{VenergydecayCor} holds for $k=1$, $j\leq7$. Repeat this again
until we have arrived at the case when $k=5$ and $j\leq 3$. That
covers all the cases and the corollary follows.
\end{proof}
\begin{remark}
 In Minkowski, we only have to commute $\Box$ with $T$ for 3 times and with $\Om$ for 5 times.
\end{remark}

\section{Boostrap Argument}
To solve our nonlinear problem, we use the standard Picard iteration process. We prove,
 by a boostrap argument, that the nonlinear term $F$ decays.

\begin{prop}
\label{mainprop}
 Suppose the nonlinearity $F$ satisfies the following conditions
\begin{description}
\item[(a)] $\sum\limits_{(k, j)\in A}\int_{r\leq R}|\Om^k T^jF|^2dx\leq 2E_0\ep^2(1+\tau)^{-3+\a};$
\item[(b)] $\sum\limits_{(k, j)\in B}\int_{r\leq R}|\nabla\Om^kT^jF|^2dx\leq 2E_0\ep^2(1+\tau)^{-3+\a};$
\item[(c)] $\sum\limits_{(k, j)\in A}\int_{\tau_1}^{\tau_2}\int_{S_\tau}|\Om^k T^j F|^2r^{3-\a}d\vol\leq
 2E_0\ep^2(1+\tau_1)^{-2+\a}.$
\end{description}
Then
\begin{align}
\label{nullbdin}
&\sum\limits_{(k, j)\in A}\int_{r\leq R}|\Om^k T^j F|^2dx\les E_0^2\ep^4(1+\tau)^{-3+\a},\\
\label{nullbdinD}
&\sum\limits_{(k, j)\in B}\int_{r\leq R}|\nabla\Om^k T^j F|^2dx\les E_0^2\ep^4(1+\tau)^{-3+\a},\\
\label{nullbdout} &\sum\limits_{(k, j)\in
A}\int_{\tau_1}^{\tau_2}\int_{S_\tau}|\Om^k T^j
F|^2r^{3-\a}d\vol\les E_0^2\ep^4(1+\tau_1)^{-2+\a},
\end{align}
where $\nabla$ denotes the covariant derivative on $\{t=\tau, r\leq R\}$.
\end{prop}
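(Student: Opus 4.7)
The strategy is a bootstrap: feed hypotheses (a)--(c) into the linear machinery of Sections~2--4 to extract decay of $\phi$ and its commuted derivatives, and use these decay estimates together with the null structure of $F$ to recover (a)--(c) with an extra factor of $E_0\ep^2$.

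\emph{Setup and decay for $\phi$.} Hypotheses (a)--(c) ensure that $\Om^k T^j F$ satisfies the conditions of Proposition~\ref{prop4} for every $(k,j)\in A$, with $C_1\les E_0\ep^2$. Corollary~\ref{VenergydecayCor} then yields
$$E[\Om^k T^j\phi](\tau)\les E_0\ep^2(1+\tau)^{-2+\a},\qquad (k,j)\in A.$$
Combining this with Proposition~\ref{energydecay}, Sobolev embedding on $S^2$ via the $\Om$-commutators, and the elliptic bound \eqref{hessphibd} for the interior (this is where hypothesis (b) and the set $B$ enter, trading Minkowski spatial derivatives for $T$-commutators via the wave equation on $r\leq R$), I obtain
$$\sum_{|\b|\leq 2}|\pa^\b\phi|\les\sqrt{E_0}\ep(1+r)^{-1/2}(1+|t-r+R|)^{-1+\a/2}$$
together with the stronger interior bound $|\phi|,|\pa\phi|,|\pa^2\phi|\les\sqrt{E_0}\ep(1+\tau)^{-1+\a/2}$ on $r\leq R$.

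\emph{Interior bounds \eqref{nullbdin}, \eqref{nullbdinD}.} Expand $\Om^k T^j F$ by Leibniz. Each summand is a product of two (or, from the cubic remainder, three) factors of the form $\Om^{k'}T^{j'}\pa^\b\phi$. I pigeonhole the factors so that one carries the low-order derivatives and is controlled pointwise by the decay above combined with Sobolev, while the remaining factor is controlled in $L^2(\{r\leq R\})$ by the energies of the Setup together with \eqref{hessphibd}. The budget $k+j\leq 8$, $k\leq 5$ defining $A$ is exactly what permits the split, leaving two free commutators for the $H^2\hookrightarrow L^\infty$ Sobolev step. This gives
$$\int_{r\leq R}|\Om^k T^j F|^2 dx\les E_0^2\ep^4(1+\tau)^{-4+2\a}\leq E_0^2\ep^4(1+\tau)^{-3+\a},$$
the last inequality using $\a<1$. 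Estimate \eqref{nullbdinD} follows identically, with the extra $\nabla$ assigned either to the high-order factor (controlled in $L^2$ via \eqref{hessphibd} applied to $\Om^k T^j\phi$) or to the low-order factor (still under pointwise control because $(k,j)\in B$ leaves two commutators of slack).

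\emph{Exterior bound \eqref{nullbdout}; the main obstacle.} This is where the null condition is essential. In $r\geq R$ the metric is Minkowskian, and the null condition $A^{\a\b}\xi_\a\xi_\b=0$ on the null cone forces the quadratic part of $F$ to be a combination of products $\pa\phi\cdot\pa_v\phi$, $\pa\phi\cdot\nabb\phi$, $(\pa_v\phi)^2$ and $|\nabb\phi|^2$: every term carries at least one good derivative $\pa_v\phi$ or $\nabb\phi$. Since both $\Om$ and $T$ preserve this null decomposition, $\Om^k T^j F$ inherits the same structure modulo harmless angular rotations. Absorbing the factor carrying the bad derivative pointwise via
$$|\pa\phi|^2\les E_0\ep^2(1+r)^{-1}(1+|t-r+R|)^{-2+\a},$$
transferring the good derivative into $\psi$-variables through $r\pa_v\phi=\pa_v\psi-\phi$ and $r\nabb\phi=\nabb\psi$ with $\psi=r\Om^{k'}T^{j'}\phi$, and invoking the $p$-weighted energy inequality \eqref{pWEineq} with $p=2-\a$ on $\Om^{k'}T^{j'}\phi$, I obtain
$$\int_{\tau_1}^{\tau_2}\!\!\int_{S_\tau}r^{3-\a}|\Om^k T^j F|^2 d\vol\les E_0\ep^2\sup_{\tau\geq\tau_1}\int_{S_\tau}r^{2-\a}(|\psi_v|^2+|\nabb\psi|^2)\,dvd\om\les E_0^2\ep^4(1+\tau_1)^{-2+\a}.$$
The cubic remainder $O(|\phi|^3+|\pa\phi|^3)$ is strictly subleading under the pointwise decay of $\phi$ and is absorbed by the same machinery. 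The hard part is precisely verifying that every term produced by commuting $\Om^k T^j$ through the null form still contains a good derivative, and that the derivative counts permit the pointwise/weighted-$L^2$ split at every level $(k,j)\in A$.
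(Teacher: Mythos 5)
Your setup (feeding (a)--(c) into Proposition \ref{prop4}, Proposition \ref{VectorDecay} and Corollary \ref{VenergydecayCor} to get $E[\Om^kT^j\phi](\tau)\les E_0\ep^2(1+\tau)^{-2+\a}$ for $(k,j)\in A$) and your treatment of the interior bounds \eqref{nullbdin}, \eqref{nullbdinD} (pointwise control of the lower-order factor via elliptic estimates and hypothesis (b), energy control of the other factor, with the combinatorics of $A$ and $B$ doing the pigeonholing) are essentially the paper's argument, up to the minor point that the pointwise elliptic bound only covers $r\leq\f12 R$ and the annulus $\f12 R\leq r\leq R$ has to be handled by Sobolev embedding on the spheres.

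The exterior estimate \eqref{nullbdout}, however, has a genuine quantitative gap, and it is exactly at the step you yourself flag as "the hard part". Your scheme is: bound the factor carrying the bad derivative pointwise by $|\pa\phi_1|^2\les E_0\ep^2(1+r)^{-1}(1+|t-r+R|)^{-2+\a}$ and pay for the good derivative with the $p$-weighted flux. But the only slice-wise fluxes the machinery provides are $\int_{S_\tau}r^{2-\a}\psi_v^2\,dvd\om\les\ep^2E_0+C_1$ (bounded, not decaying, cf.\ \eqref{pro:a}) and $\int_{S_\tau}r\,\psi_v^2\,dvd\om\les(1+\tau)^{-1+\a}(\ep^2E_0+C_1)$ (cf.\ \eqref{stauenergy1}); the claim in your final display that $\sup_{\tau\geq\tau_1}\int_{S_\tau}r^{2-\a}(\psi_v^2+|\nabb\psi|^2)\,dvd\om\les E_0\ep^2(1+\tau_1)^{-2+\a}$ is not available (indeed the slice-wise $r^{2-\a}$-weighted angular flux is not controlled at all by \eqref{pWEineq}, which only yields it space-time integrated with weight $r^{p-1}$). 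Running your argument with what is actually known gives, for the term $r^{3-\a}|\pa_u\phi_1|^2|\pa_v\phi_2|^2$, at best $\ep^2E_0\int_{\tau_1}^{\tau_2}(1+\tau)^{-2+\a}\bigl(\sup_\tau\int_{S_\tau}r^{2-\a}\psi_v^2\bigr)d\tau\les\ep^4E_0^2(1+\tau_1)^{-1+\a}$, one full power of $\tau$ short of \eqref{nullbdout}; trading down to the decaying $p=1$ flux leaves an unabsorbable extra weight $r^{1-\a}$, since the pointwise bound only supplies $(1+r)^{-1}$. This is why the paper does not put the bad derivative in $L^\infty$ at all: it decomposes the null form as in \eqref{nullform}, treats the $\phi$- and angular terms by pairing the mildly decaying bound $r^2\int_\om|\phi_{1'}|^2d\om\les(1+\tau)^{-1+\delta}\ep^2E_0$ with the strongly decaying integrated local energy \eqref{ILE0}, and handles the dangerous product $|\pa_v\psi_1|^2|\pa_u\psi_2|^2$ by combining (i) Lemma \ref{crossnullen}, the energy flux of $\pa_u\psi_2$ through outgoing cones $v=\mathrm{const}$, which genuinely decays like $(1+\tau_1)^{-2+\a}$, with (ii) an integrated sup bound $\int_{v_{\tau_1}}^\infty\sup_{u,\om}r^{1-\a}|\pa_v\psi_1|^2\,dv\les\ep^2E_0$ obtained from Sobolev embedding plus the wave equation \eqref{waveqpsi} (and the symmetric splitting when $k_2\leq k_1$). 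These two ingredients---the $v=\mathrm{const}$ flux lemma and the $L^1_vL^\infty_{u,\om}$-type control of the good derivative---are missing from your proposal, and without them the exterior bootstrap does not close at the rate $(1+\tau_1)^{-2+\a}$.
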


Estimates ~\eqref{nullbdin} and ~\eqref{nullbdout} are sufficient to
conclude our theorem. The extra boostrap assumption $(b)$ is used to
prove ~\eqref{nullbdin}. We use elliptic estimates to prove
~\eqref{nullbdin}, which, in turn, implies ~\eqref{nullbdinD}. For
~\eqref{nullbdout}, we rely on the p-weighted energy inequality and
the null structure of the quadratic nonlinearity of $F$. The
assumptions here, together with Proposition~\ref{prop4} and Proposition \ref{VectorDecay}, imply that
\[
E[\Om^k T^j \phi](\tau)\les E_0\ep^2(1+\tau)^{-2+\a},\quad
\forall(k, j)\in A.
\]

Since the cubic or higher order nonlinearities of $F$ behave better,
it suffices to consider the quadratic nonlinearity
$A^{\mu\nu}\pa_\mu\phi\pa_\nu\phi$ of $F$, where the constants
$A^{\mu\nu}$ satisfy the null condition. For all $(k, j)\in A$, it
is known that
\begin{equation}
\label{OmTN}
 \Om^k T^j F =\Om^k T^j(A^{\a\b}\pa_\a\phi\pa_\b\phi)=\sum A^{\a\b}\pa_\a\Om^{k_1}T^{j_1}\phi\cdot\pa
 _\b\Om^{k_2}T^{j_2}\phi,
\end{equation}
where notice that $[T, \pa_\a]=0$,  $[\Om, \pa_\a]=0$ or $\pa_\b$ up
to a constant and $k_1+k_2\leq k$,  $j_1+j_2\leq j$. For simplicity, in the sequel, we denote $\phi_1=\Om^{k_1}T^{j_1}\phi$,
$\phi_2=\Om^{k_2}T^{j_2}\phi$. Before proving Proposition \ref{mainprop} in details, we prove a lemma exhibiting
 the properties of the sets $A$ and $B$.
\begin{lem}
\label{propofAB}
Assume $(k_1+k_2, j_1+j_2)\in A$. Then
\begin{itemize}
 \item[(1)] $(k_i+2, j_i+1)\in A$ for at least one $i\in\{1, 2\};$
 \item[(2)] $(k_i, j_i)\in B$ for at least one  $i\in\{1, 2\};$
 \item[(3)] If $(k, j)\in A$ or $B$ then $(k', j')\in A$ or $B$ for any $k'\leq k$, $j'\leq j.$
\end{itemize}
 \end{lem}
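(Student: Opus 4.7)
The plan is to unpack the definitions of $A$ and $B$ as explicit inequalities and then verify each of the three claims by elementary arithmetic, mostly via pigeonhole-type arguments. Recall that $(k,j)\in A$ means $k+j\leq 8$ and $k\leq 5$, while $(k,j)\in B$ means $k\leq 5$ and $k+j\leq 6$. Throughout I will assume $(k_1+k_2,\,j_1+j_2)\in A$, i.e.\ $k_1+k_2\leq 5$ and $k_1+k_2+j_1+j_2\leq 8$.

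For part (1), I need to find an index $i$ with $k_i\leq 3$ and $k_i+j_i\leq 5$. The natural approach is a short case split on whether some $k_i$ is large. If both $k_1,k_2\leq 3$, then by pigeonhole at least one of $k_1+j_1,\,k_2+j_2$ is $\leq 4\leq 5$, since otherwise their sum would exceed $8$; the chosen index then satisfies both required bounds. If instead one of them, say $k_1$, is $\geq 4$, then automatically $k_2\leq 1\leq 3$, and moreover $k_2+j_2\leq 8-k_1-j_1\leq 4\leq 5$, so $i=2$ works. Either way some index gives $(k_i+2,j_i+1)\in A$.

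For part (2), the condition $(k_i,j_i)\in B$ amounts to $k_i\leq 5$ (automatic since $k_i\leq k_1+k_2\leq 5$) and $k_i+j_i\leq 6$. Since $(k_1+j_1)+(k_2+j_2)\leq 8$, at least one of the two summands is $\leq 4$, and this is $\leq 6$, which is exactly what is needed. For part (3), I just note that both defining inequalities of $A$ and of $B$ are monotone: decreasing either coordinate preserves $k+j\leq 8$ (resp.\ $k+j\leq 6$) and $k\leq 5$, so membership in $A$ (resp.\ $B$) is preserved.

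None of these three statements poses a real obstacle; the only place requiring any thought is (1), because one must simultaneously control $k_i$ and $k_i+j_i$, and simply choosing $i$ to minimize one quantity need not control the other. The case split on whether some $k_i\geq 4$ cleanly resolves this, since a large $k_i$ forces the other index to be small in both components via the budget $k_1+k_2\leq 5$ and $k_1+k_2+j_1+j_2\leq 8$.
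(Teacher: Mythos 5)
Your proof is correct and follows essentially the same route as the paper: unpack $A$ and $B$ as the inequalities $k\leq 5$, $k+j\leq 8$ (resp.\ $k+j\leq 6$) and verify (1)--(3) by pigeonhole on the budget $k_1+j_1+k_2+j_2\leq 8$ together with the observation that $k_i\geq 4$ forces $k_{3-i}\leq 1$ via $k_1+k_2\leq 5$. The only cosmetic difference is the organization of the case split in (1) and that your pigeonhole argument for (2) is slightly cleaner than the paper's WLOG $j_1\leq j_2$ chain.
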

\begin{proof} For the first property, since $k_1+j_1 + k_2 +j_2\leq 8$, without loss of generality, we assume $k_1 + j_1\leq 4$. If $k_1\leq 3$, then $(k_1 +2, j_1 +1)\in A$ by definition; If $k_1\geq 4$, then $k_2\leq 1$ and $k_2+2+j_2+1\leq 4+3\leq 8$. Thus $(k_2+2, j_2+1)\in A$.

For the second property, without loss of generality, we assume  $j_1\leq j_2$. Then
$k_1+j_1+2\leq k_1+j_1+k_2+j_2\leq 8$, which shows $(k_1, j_1+2)\in A$. By definition, $(k_1, j_1)\in B$. The
 third property holds by the definition.
\end{proof}

\subsection{Proof of ~\eqref{nullbdin} and ~\eqref{nullbdinD}}
Inside the cylinder $\{r\leq R\}$, it is not necessary to require the null structure of the quadratic nonlinearity of $F$. To 
estimate $F$, we rely on the robust elliptic estimates to obtain the pointwise bound of the solution 
and control the rest by the energy inequality.
 
We first prove \eqref{nullbdin}. Notice that 
\begin{equation}
\label{pro3:12}
 \sum\limits_{(k, j)\in A}|\Om^k T^j F|^2  \les
 |\pa\phi_1|^2|\pa\phi_2|^2.
\end{equation}
Since $k_1+k_2\leq k$, $j_1+j_2\leq j$, without loss of generality,
assume $(k_1, j_1)\in B$ according to Lemma ~\ref{propofAB}, that
is, $(k_1, j_1+2)\in A$. We claim that
\begin{equation}
\label{Tptwisebd} |\pa\phi_1|^2\les \ep^2 E_0(1+\tau)^{-2+\a},\quad
r\leq \f12 R.
\end{equation}
In fact, for $\pa_t\phi_1$, since $(k_1, j_1+2)\in A$, Proposition
~\ref{VectorDecay} shows that
 $$E[\pa_{tt}\phi_1]\les E_0\ep^2 (1+\tau)^{-2+\a}$$
  and
$\pa_t\Box_g(\pa_t\phi_1)$ satisfies the conditions in Proposition
~\ref{prop4}. Therefore Proposition ~\ref{energydecay} implies that
\[
 |\pa_t\phi_1|\les \ep^2 E_0(1+\tau)^{-2+\a},\quad r\leq \f12 R.
\]
For $|\nabla\phi_1|^2$, using elliptic estimates and the inequality
\eqref{hessphibd}, we can show that
\begin{align*}
&\int_{r\leq \f12 R}|\nabla\pa_{\a\b}\phi_1|^2dx\les\int_{r\leq R}|g^{ij}\pa_{ij}\nabla\phi_1|^2+|\nabla\pa_{\b t}\phi_1|^2+|\pa_{\a\b}\phi_1|^2+|\pa\phi|^2dx\\
&\les\int_{r\leq R}\left|\left(\Box_g - g^{00}\pa_{tt}-2g^{0i}\pa_{ti}-\frac{1}{\sqrt{-G}}\pa_\a(g^{\a\b}\sqrt{-G})\pa_\b\right)\nabla\phi_1\right|^2 dx\\
&\qquad+\int_{r\leq R}|\nabla\pa_{\b}\pa_t\phi_1|^2+|\pa_{\a\b}\phi_1|^2+|\pa\phi|^2dx\\
&\les\int_{r\leq R}|\nabla\Box_g\phi_1|^2 +|\nabla\pa_{tt}\phi_1|^2 + |\nabla\phi_1|^2dx
+\int_{r\leq \f12 R}|\pa_{\a\b}\phi_1|^2+|\pa\phi_1|^2dx\\
&\qquad +\int_{r\leq R}|\nabla\pa_{\b}\pa_t\phi_1|^2+|\pa_{\a\b}\phi_1|^2dx+|\pa\phi|^2\\
&\les \int_{r\leq R}|\nabla\Box_g\phi_1|^2 dx + \ep^2
E_0(1+\tau^+)^{-2+\a},
\end{align*}
where $\tau^{+}=\max\{\tau-R, 0\}$. Now can write
\begin{align*}
 \nabla\Box_g\phi_1=\nabla \Box_g\Om^{k_1}T^{j_1}\phi
&=\nabla\Om^{k_1}T^{j_1}F + \sum\limits_{\substack{k\leq k_1,j\leq j_1\\k+j<k_1+j_1}} f^{\a\b}_{kj}\nabla\pa_{\a\b}\Om^{k}T^{j}\phi+ f^{\a}_{kj}\nabla\pa_\a\Om^{k}T^{j}\phi\\
&\qquad+\sum\limits_{k\leq k_1,j\leq j_1}
F^{\a\b}_{kj}\pa_{\a\b}\Om^{k}T^{j}\phi+
F^{\a}_{kj}\pa_\a\Om^{k}T^{j}\phi,
\end{align*}
where $f^{\a\b}_{kj}, f^{\a}_{kj}$, $F^{\a\b}_{kj}, F^{\a}_{kj}$ are smooth functions depending on $g$
 and are supported in $r\leq \f12 R$. Therefore, using the boostrap assumptions $(b)$ and the estimate ~\eqref{hessphibd},
  we obtain
\begin{align*}
 &\int_{r\leq \f12 R}|\nabla\pa_{\a\b}\phi_1|^2dx\les E_0\ep^2(1+\tau)^{-2+\a}+\sum\limits_{1'<1}
 \int_{r\leq \f12 R}|\nabla\pa_{\a\b}\phi_{1'}|^2dx,
\end{align*}
where $1'<1$ means $k_{1'}\leq k_1$, $j_{1'}\leq j_1$ and
$k_{1'}+j_{1'}<k_1+j_1$. By induction, we get
\[
\int_{r\leq \f12 R}|\nabla\pa_{\a\b}\phi_1|^2dx\les
E_0\ep^2(1+\tau)^{-2+\a}.
\]
Then Sobolev embedding and the estimate ~\eqref{hessphibd} imply
that
\begin{equation}
\label{Caest}
\begin{split}
 \|\nabla\phi_1\|_{C^\f12(B_{\f12 R})}^2&\les \int_{r\leq \f12 R}|\pa_{ij}\nabla\phi_1|^2+|\nabla\phi_1|^2dx\\
&\les \int_{r\leq \f12 R}|\nabla\pa_{ij}\phi_1|^2+|\pa_{\a\b}\phi_1|^2+|\pa\phi_1|^2dx\\
&\les E_0\ep^2(1+\tau)^{-2+\a}.
\end{split}
\end{equation}
We thus have shown the desired estimate \eqref{Tptwisebd}, which
implies that
\begin{align*}
\sum\limits_{(k, j)\in A}\int_{r\leq \f12 R}|\Om^k T^j F|^2dx &\les(1+\tau)^{-2+\a}E_0 \ep^2 \int_{r\leq\f12
R}|\pa\phi_2|^2dx\\
&\les(1+\tau)^{-2+\a}E_0 \ep^2 E[\phi_2](\tau)\\
&\les(1+\tau)^{-3+\a}E_0^2 \ep^4.
\end{align*}
When $\f12 R\leq r\leq R$, we use the angular momentum $\Om$.
Observe that $k_1+k_2\leq 5$. Using Sobolev embedding on the unit
sphere, we always have
\begin{equation}
 \label{SemSphere}
\int_{\om}|\pa\phi_1|^2\cdot|\pa\phi_2|^2d\om\les\int_{\om}|\pa\phi_{1'}|^2d\om\cdot\int_{\om}|\pa\phi_{2'}|^2d\om,
\end{equation}
where $k_{1'}\leq k_1+2$, $k_{2'}=k_2$ if $k_1\leq k_2$; otherwise,
$k_{1'}=k_1$, $k_{2'}\leq k_2+2$. In any case, $(k_{1'}, j_{1}),
(k_{2'}, j_2)\in A$. Since $j_1+j_2=j$, without loss of generality,
we assume $(k_{1'}, j_1+1)\in A$. Therefore, we have
\begin{align}
\label{SemrR}
&\int_{\om}|\pa\phi_{1'}|^2d\om\les \int_{r\leq R}|\pa\phi_{1'}|^2+|\nabla\pa\phi_{1'}|^2dx\\
\notag
&\les E[\phi_{1'}](\tau)+E[\pa_t\phi_{1'}](\tau^+)+D[\pa_t F]_{\tau^{+}}^{\tau+R}+E[\phi_{1'}](\tau^+)+D[F]_{\tau^{+}}^{\tau+R}\\
\notag &\les (1+\tau)^{-2+\a}E_0 \ep^2,
\end{align}
where $\tau^+=\max\{\tau-R, 0\}$. Then ~\eqref{SemSphere} implies
that
\begin{align*}
\sum\limits_{(k, j)\in A}\int_{\f12 R\leq r\leq R}|\Om^k T^j F|^2dx &\les(1+\tau)^{-2+\a}E_0 \ep^2 \int_{r\leq R}|\pa\phi_{2'}|^2dx\\
&\les(1+\tau)^{-2+\a}E_0 \ep^2 E[\phi_{2'}](\tau)\\
&\les(1+\tau)^{-3+\a}E_0^2 \ep^4.
\end{align*}
This completes the proof for ~\eqref{nullbdin}.
\begin{remark}
 We remark here that ~\eqref{SemrR} is only true when $r$ is bigger than a constant. That is why we need
 to distinguish the two cases: $r\leq \f12 R$ and $\f12 R\leq r\leq
 R$.
\end{remark}

Now we use \eqref{nullbdin} to prove ~\eqref{nullbdinD}. By
~\eqref{OmTN} and ~\eqref{SemSphere}, for all $(k, j)\in B$, we can
show that
\begin{equation*}
 \int_{\om}|\nabla\Om^k
T^j
F|^2d\om\les\int_{\om}|\pa\phi_1|^2\cdot|\pa_{\b\gamma}\phi_2|^2d\om\les\int_{\om}|\pa\phi_{1'}|^2d
\om\cdot\int_{\om}|\pa_{\b\gamma}\phi_{2'}|^2d\om,
\end{equation*}
where $k_{1'}\leq k_1+2$, $k_{2'}=k_2$ if $k_1\leq k_2$; otherwise,
$k_{1'}=k_1$, $k_{2'}\leq k_2+2$.
 In any case, $(k_{1'}, j_{1}), (k_{2'}, j_{2})\in B$. Thus by ~\eqref{Tptwisebd} and
 ~\eqref{SemrR}, we have
$$\int_{\om}|\pa\phi_{1'}|^2d\om\les (1+\tau)^{-2+\a}\ep^2 E_0,\quad r\leq
R.
$$
On the other hand, ~\eqref{hessphibd} implies that
$$
\int_{r\leq R}|\pa_{\b\gamma}\phi_{2'}|^2dx\les
(1+\tau)^{-2+\a}\ep^2 E_0.
$$
Therefore
\begin{align*}
 \sum\limits_{(k, j)\in B}\int_{r\leq R}|\nabla\Om^k T^j F|^2dx&\les(1+\tau)^{-2+\a}\ep^2
 E_0\int_{r\leq R}|\pa_{\b\gamma}\phi_{2'}|^2dx\les
 E_0^2\ep^4(1+\tau)^{-3+\a}.
\end{align*}
 We thus have shown ~\eqref{nullbdinD}.
\begin{remark}
 It is not necessary to require the nonlinearity to satisfy the null condition when $r\leq R$. Moreover, when $r\leq R$, the nonlinearity can be any form of
\[
 F=F(\pa\phi, \phi), \quad F(0, 0)=0,\quad DF(0, 0)=0.
\]
\end{remark}

\subsection{Proof of ~\eqref{nullbdout}}
For this part, we rely on the p-weighted energy inequality and the
null structure of the nonlinearity. For the null form
$N=A^{\mu\nu}\pa_\mu\phi\pa_\nu\phi$, we have
$$\Om^k T^j N(\pa\phi, \pa\phi)=\sum N(\pa\phi_1, \pa\phi_2),$$
where $\phi_1=\Om^{k_1}T^{j_1}\phi$, $\phi_2=\Om^{k_2}T^{j_2}\phi$ and $k_1+k_2\leq k, j_1+j_2\leq j$. 
The p-weighted energy inequality is an estimate in terms of $\psi=r\phi$ instead of $\phi$. For this reason, we expand $F$ in $\psi$.
\begin{lem}
If $N$ is a null form, then
\begin{equation}
\label{nullform} r^4|\Om^k T^j N|^2\les\sum\limits_{1,
2}\phi_1^2\phi_2^2+\phi_1 ^2\cdot r^2\pa_r^2\phi_2+|\nabb\psi_1|^2|
\nabb\psi_2|^2+|\pa_v\psi_1|^2|\pa_u\psi_2|^2,
\end{equation}
where $v=\frac{t+r}{2}, u=\frac{t-r}{2}$.
\end{lem}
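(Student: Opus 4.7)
The plan is to reduce the inequality to a pointwise bound on a single product $N(\pa\phi_1,\pa\phi_2)$ and then exploit the null structure in the null frame $(\pa_u,\pa_v,\nabb)$. First, I would use Leibniz and the commutation relations $[T,\pa_\a]=0$ and $[\Om,\pa_\a]\in\{0,\pm\pa_\b\}$ to expand
\[
\Om^{k}T^{j}N(\pa\phi,\pa\phi)=\sum_{\substack{k_1+k_2\le k\\ j_1+j_2\le j}}c\,\tilde A^{\mu\nu}\pa_\mu\phi_1\,\pa_\nu\phi_2,
\]
where each symbol $\tilde A^{\mu\nu}$ is obtained from $A^{\mu\nu}$ by applying spatial rotations (coming from the $\Om$'s). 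Since the null condition is invariant under rotations, every $\tilde A^{\mu\nu}$ is itself a null form, so it suffices to prove the pointwise estimate for a single $N(\pa\phi_1,\pa\phi_2)$.

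Second, I would expand this product in the null frame. The defining property $A^{\mu\nu}\xi_\mu\xi_\nu=0$ on the null cone applied to $\xi=d(t\pm r)$ shows that the coefficients of $\pa_u\phi_1\pa_u\phi_2$ and $\pa_v\phi_1\pa_v\phi_2$ vanish identically. Hence schematically
\[
N(\pa\phi_1,\pa\phi_2)=a_1\pa_v\phi_1\pa_u\phi_2+a_2\pa_u\phi_1\pa_v\phi_2+\nabb\phi_1\!\cdot\! b_1(\pa\phi_2)+\nabb\phi_2\!\cdot\! b_2(\pa\phi_1)+c\,\nabb\phi_1\!\cdot\!\nabb\phi_2,
\]
with bounded coefficients, so every summand carries at least one good derivative ($\pa_v$ or $\nabb$) on at least one factor.

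Third, I would pass to the weighted unknown $\psi_i=r\phi_i$ by means of the identities
\[
r\pa_v\phi_i=\pa_v\psi_i-\phi_i,\qquad r\pa_u\phi_i=\pa_u\psi_i+\phi_i,\qquad r\nabb\phi_i=\nabb\psi_i.
\]
Multiplying $N(\pa\phi_1,\pa\phi_2)$ by $r^2$ and substituting produces a finite sum of terms of the schematic types
\[
\pa_v\psi_1\pa_u\psi_2,\ \pa_u\psi_1\pa_v\psi_2,\ \nabb\psi_1\nabb\psi_2,\ \phi_i\,\pa_*\psi_j,\ \phi_1\phi_2,
\]
with no $\pa_u\psi_1\pa_u\psi_2$ contribution thanks to step two. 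Squaring and applying Young's inequality converts each of the pure bilinear pieces into $|\pa_v\psi_1|^2|\pa_u\psi_2|^2$ or $|\nabb\psi_1|^2|\nabb\psi_2|^2$ (after relabeling $1\leftrightarrow 2$ in the sum), while the mixed pieces are controlled by $\phi_1^2\phi_2^2$ and by the $\phi_1^2\cdot r^2\pa_r^2\phi_2$ term on the right-hand side, where the latter absorbs contributions like $\phi_1^2\,r^2(\pa_r\phi_2)^2$ arising from writing $r\pa_r=\tfrac12(r\pa_v-r\pa_u)$.

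The main obstacle is the bookkeeping in step three: one must verify that every cross term produced by the substitution $r\pa_v\phi_i=\pa_v\psi_i-\phi_i$ (and its $\pa_u$ analogue) falls into one of the four categories on the right-hand side, and in particular that no term of the forbidden form $|\pa_u\psi_1|^2|\pa_u\psi_2|^2$ is generated. This is exactly where the null condition is used: the vanishing of the $\pa_u\phi_1\pa_u\phi_2$ coefficient identified in step two is precisely what rules out the dangerous product of two $\pa_u$ derivatives and allows every summand in the final bound to have at least one good derivative (in the $\psi$ variables) or to carry an undifferentiated $\phi$ factor.
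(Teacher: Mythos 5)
Your route is essentially the paper's computation run in the opposite order: the paper first trades $r^2N(\pa\phi_1,\pa\phi_2)$ for $N(\pa\psi_1,\pa\psi_2)$ plus the corrections $\phi_1\phi_2$ and $r(\phi_1\phi_2)_r$, and then expands $N(\pa\psi_1,\pa\psi_2)$ in the null frame, while you expand in the null frame first and substitute $r\pa_v\phi=\pa_v\psi-\phi$, $r\pa_u\phi=\pa_u\psi+\phi$, $r\nabb\phi=\nabb\psi$ afterwards; the reduction to a single product $N(\pa\phi_1,\pa\phi_2)$ via \eqref{OmTN} is the same in both arguments.

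There is, however, one bookkeeping step that does not close as you wrote it. In your step two you (correctly) keep terms $\nabb\phi_1\cdot b_1(\pa\phi_2)$ with $b_1$ an arbitrary first derivative: the null condition kills only the $\pa_u\pa_u$ and $\pa_v\pa_v$ products, and a non-symmetric null form such as $Q_{0i}=\pa_t\phi_1\pa_i\phi_2-\pa_i\phi_1\pa_t\phi_2$ genuinely contains $\nabb\phi_1\,\pa_u\phi_2$ and $\pa_u\phi_1\,\nabb\phi_2$. After multiplying by $r^2$ such a term becomes $\nabb\psi_1\,(\pa_u\psi_2+\phi_2)$, so your step-three list, and the right-hand side of \eqref{nullform}, would have to contain the additional type $|\nabb\psi_1|^2|\pa_u\psi_2|^2$, which is not dominated pointwise by the four displayed terms; thus for an arbitrary null form the final absorption fails exactly on these angular--$\pa_u$ cross terms, and your appeal to the null condition excludes only the double-$\pa_u$ product. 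The reason the lemma is nevertheless fine in this paper (and why the short proof there works) is that the quadratic nonlinearity $A^{\a\b}\pa_\a\phi\pa_\b\phi$ has both slots occupied by the same $\phi$, so only the symmetric part of $A^{\a\b}$ contributes, and a symmetric array satisfying the null condition is necessarily a constant multiple of $m^{\a\b}$; hence every $N(\pa\phi_1,\pa\phi_2)$ arising from \eqref{OmTN} is a multiple of $Q_0$, whose null-frame expansion is $-\tfrac12(\pa_u\phi_1\pa_v\phi_2+\pa_v\phi_1\pa_u\phi_2)+\nabb\phi_1\cdot\nabb\phi_2$ with no angular--$\pa_u$ terms, and your substitution then lands exactly in the four stated categories (the $\phi_i\,\pa_\ast\psi_j$ pieces recombine, up to signs, into $r(\phi_1\phi_2)_r$ and $\phi_1\phi_2$). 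Either make this reduction to $Q_0$ explicit, or add $|\nabb\psi_1|^2|\pa_u\psi_2|^2$ and its symmetric counterpart to the right-hand side (these are harmless in the subsequent estimates); with that one repair your argument is complete.
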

\begin{proof} In fact, notice that
$$r^2 N(\pa\phi_1, \pa\phi_2)=\phi_1\phi_2+r(\phi_1\phi_2)_r+N(\pa\psi_1, \pa\psi_2)
$$
and
$$|N(\pa\psi_1, \pa\psi_2)|\les |\pa_v\psi_1|\cdot|\pa_u\psi_2|+|\pa_u\psi_1|\cdot|\pa_v\psi_2|+|\nabb\psi_1
|\cdot|\nabb\psi_2|.
$$
The lemma then follows.
\end{proof}

\bigskip

Using this lemma, it suffices to consider the four terms on the
right hand side of ~\eqref{nullform}. We handle the first three
terms in a uniform way. Let $\Phi_1$ be $\phi_1$ or $\nabb\psi_1$;
$\Phi_2$ be $\phi_2$, $r\pa_r\phi_2$ and $\nabb\psi_2$
correspondingly. Since $k_1+k_2\leq 5$, using Sobolev embedding on
the unit sphere as we did in ~\eqref{SemSphere}, we obtain
\begin{equation}
\label{pro3om}
\int_{\om}|\Phi_1|^2|\Phi_2|^2d\om\les\int_{\om}|\Phi_{1'}|^2d\om\cdot\int_{\om}|\Phi_{2'}|^2d\om,
\end{equation}
where $k_{1'}\leq k_1+2$, $k_{2'}=k_2$ if $k_1\leq k_2$; otherwise,
$k_{1'}=k_1$, $k_{2'}\leq k_2+2$. Here we omit the summation sign.
For the third case when $\Phi_1=\nabb\psi_1$, $\Phi_2=\nabb\psi_2$,
without loss of generality, we assume $k_1\leq k_2$. Thus by the
fact that $\nabb\psi_1=\Om\phi_1$, we can always write $\Phi_{1'}$
as $\Om^k T^j\phi$ for some $(k, j)\in A$. Then ~\eqref{pbout} shows
that
$$
r^2\int_{\om}|\Phi_{1'}|^2d\om \les(1+\tau)^{-1+\delta}\ep^2
E_0,\quad\forall \delta>\a.
$$
Therefore, by the integrated energy inequality ~\eqref{ILE0}, we
have
\begin{equation}
\label{phiphi}
\begin{split}
\int_{\tau_1}^{\tau_2}\int_{S_\tau}r^{-1-\a}\Phi_{1}^2\Phi_{2}^2 d\vol&=\int_{\tau_1}^{\tau_2}\int_{v_\tau}^{\infty}\int_{\om}r^{1-\a}\Phi_{1}^2\Phi_{2}^2 dvd\om d\tau\\
&\les \int_{\tau_1}^{\tau_2}\int_{v_\tau}^{\infty} r^{1-\a}
\int_{\om}|\Phi_{1'}|^2d\om\int_{\om}|\Phi_{2'}|^2d\om dvd\tau\\
&\les(1+\tau_1)^{-1+\delta}\ep^2 E_0\int_{\tau_1}^{\tau_2}\int_{S_\tau}\frac{|\Phi_{2'}|^2}{r^{3+\a}}d\vol\\
&\les(1+\tau_1)^{-1+\delta}\ep^2 E_0 (1+\tau_1)^{-2+\a}\ep^2 E_0\\
&\les (1+\tau_1)^{-2+\a}\ep^4 E_0^2.
\end{split}
\end{equation}

\bigskip

It remains to estimate the main terms
$|\pa_v\psi_1|^2|\pa_u\psi_2|^2$ in ~\eqref{nullform}. There are two
cases according to which one is bigger: $k_1$ or $k_2$.

\bigskip

\textbf{When} $k_1\leq k_2$, in particular we have $k_1\leq 2$. The
idea is that we bound $|\pa_v\psi_1|$ uniformly and then control
$|\pa_u\psi_2|^2$ by the energy flux through the null hypersurface
$v=constant$. We first establish a lemma to show that the energy
flux through $v=constant$ is bounded.
\begin{lem}
\label{crossnullen}
 Consider the region $D=[u_1, u_2]\times[v_1, \infty)\subset S_\tau \times [\tau_1, \tau_2]$. Then
$$\int_{u_1}^{u_2}\int_{|\om|=1}(\pa_u\psi_2)^2d\om du\les (1+\tau_1)^{-2+\a}\ep^2
E_0.
$$
\end{lem}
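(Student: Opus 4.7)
The plan is to decompose $\pa_u\psi_2 = -\phi_2 + r\pa_u\phi_2$ (using $\psi_2 = r\phi_2$ and $\pa_u r = -1$), which gives the pointwise bound $(\pa_u\psi_2)^2 \leq 2\phi_2^2 + 2r^2(\pa_u\phi_2)^2$. It therefore suffices to control the two integrals $\int_{u_1}^{u_2}\int_\om \phi_2^2\, d\om\, du$ and $\int_{u_1}^{u_2}\int_\om r^2(\pa_u\phi_2)^2\, d\om\, du$ on the incoming null hypersurface $\{v = v_1\}$, each at rate $(1+\tau_1)^{-2+\a}\ep^2 E_0$.

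For the derivative piece, I apply Stokes' theorem to the energy current $J^T_\mu[\phi_2]$ over $D$. Since $D$ lies entirely in the flat exterior region $r\geq R$, the vector field $T=\pa_t$ is Killing there and $K^T[\phi_2]=0$, so the divergence reduces to $(T\phi_2) F_2$ with $F_2 = \Box_g(\Om^{k_2}T^{j_2}\phi)$. Dropping the non-negative fluxes on the outgoing boundaries $u = u_2$ and $v = \infty$, the flux on $\{v=v_1\}$ satisfies
$$\tfrac12\int_{v = v_1} r^2\bigl[(\pa_u\phi_2)^2 + |\nabb\phi_2|^2\bigr]\, du\, d\om \leq E[\phi_2](\tau_1) + \Bigl|\int_D F_2\, T\phi_2\, d\vol\Bigr|.$$
Splitting the bulk term by Cauchy--Schwarz with weights $(1+r)^{\pm(\a+1)/2}$ produces $D[F_2]_{\tau_1}^{\tau_2}$ against $\int_D (1+r)^{-\a-1}(T\phi_2)^2\, d\vol$, and the latter is controlled by Proposition~\ref{ILEthm}. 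Combined with Corollary~\ref{VenergydecayCor} giving $E[\phi_2](\tau)\les (1+\tau)^{-2+\a}\ep^2 E_0$ and the bootstrap bounds on $D[F_2]_{\tau_1}^{\tau_2}$, this yields the required decay for the $r^2(\pa_u\phi_2)^2$ integral.

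For the $\phi_2^2$ piece I integrate by parts in $u$ along $\{v=v_1\}$ using $\pa_u(r\phi_2^2) = -\phi_2^2 + 2r\phi_2\pa_u\phi_2$, producing boundary contributions at $u_1,u_2$ and a bulk term $2\int r\phi_2\pa_u\phi_2\, du$. Weighted Cauchy--Schwarz on the bulk term absorbs half of $\int \phi_2^2\, du$ and, after dropping the non-positive $u_2$-boundary contribution, gives
$$\int_{u_1}^{u_2}\phi_2^2\, du \les r(u_1)\phi_2^2(u_1, v_1, \om) + \int_{u_1}^{u_2} r^2(\pa_u\phi_2)^2\, du.$$
Integrating over the sphere, the second term is handled by the preceding paragraph; for the corner term, the hypothesis $D \subset \bigcup_{\tau\in[\tau_1,\tau_2]} S_\tau$ places $(u_1,v_1)$ on $S_{\tau_1}$, so Lemma~\ref{lem1} yields $r(u_1)\int_\om \phi_2^2\, d\om \leq E[\phi_2](\tau_1) \les (1+\tau_1)^{-2+\a}\ep^2 E_0$. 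The delicate step is this integration by parts in $u$: a more naive Hardy-type bound of $\phi_2^2$ via $\int_v^\infty \pa_v\phi_2\, dv$ along constant-$u$ rays produces a divergent logarithmic factor $\ln\frac{v_1-u_1}{v_1-u_2}$, whereas the $u$-integration by parts generates an $r$-weight from $\pa_u r = -1$ that exactly cancels the Cauchy--Schwarz loss and gives the uniform bound independent of the $u$-length of the interval.
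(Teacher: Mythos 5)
Your overall strategy is the same as the paper's (a $T$-energy identity on $D$ for the $r^2(\pa_u\phi_2)^2$ part, plus Lemma~\ref{lem1}/the sphere-averaged decay of $r\phi_2^2$ for the zeroth-order part, with the bulk term split by weighted Cauchy--Schwarz against \eqref{ILE0}), but the central step is justified incorrectly. With $D=[u_1,u_2]\times[v_1,\infty)$, the incoming segment $\{v=v_1,\ u_1\leq u\leq u_2\}$ is a \emph{past} boundary of $D$: together with $\{u=u_1\}\subset S_{\tau_1}$ it forms the past wedge emanating from the corner $(u_1,v_1)$, while $\{u=u_2\}\subset S_{\tau_2}$ and null infinity are the future boundaries. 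The divergence theorem therefore reads (up to sign conventions)
\begin{equation*}
\int_{\{u=u_2\}}J^T_\mu[\phi_2]n^\mu d\si+\int_{\mathcal I}J^T_\mu[\phi_2]n^\mu d\si
=\int_{\{u=u_1\}}J^T_\mu[\phi_2]n^\mu d\si+\int_{\{v=v_1\}}J^T_\mu[\phi_2]n^\mu d\si+\int_D F_2\,T\phi_2\,d\vol ,
\end{equation*}
so when you solve for the $\{v=v_1\}$ flux, the fluxes through $u=u_2$ and through null infinity appear with a \emph{plus} sign: they cannot be ``dropped'' to obtain an upper bound. (What can be dropped is $-\int_{\{u=u_1\}}$, which you instead retain as $E[\phi_2](\tau_1)$.) As written, your intermediate inequality $\tfrac12\int_{v=v_1}r^2[(\pa_u\phi_2)^2+|\nabb\phi_2|^2]\leq E[\phi_2](\tau_1)+|\int_D F_2 T\phi_2|$ has no valid derivation from the identity on $D$, and indeed it omits contributions that are genuinely there.

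The gap is fixable with material already in the paper, and this is exactly what the paper's proof uses: bound the future fluxes by $\int_{\{u=u_2\}}J^T_\mu n^\mu d\si\les E[\phi_2](\tau_2)$ and $\int_{\mathcal I}J^T_\mu n^\mu d\si\leq I[\phi_2]_{\tau_1}^{\tau_2}$, and then invoke the energy inequality \eqref{eb} (with $E$ in place of $\tilde E$ by Corollary~\ref{cor4}), or equivalently the already-established decay of $E[\phi_2]$ from Corollary~\ref{VenergydecayCor} together with the bound on $I[\phi_2]_{\tau_1}^{\tau_2}$, to get $E[\phi_2](\tau_2)+I[\phi_2]_{\tau_1}^{\tau_2}\les E[\phi_2](\tau_1)+D[\Box_g\phi_2]_{\tau_1}^{\tau_2}$; the extra $D[\Box_g\phi_2]$ term is harmless since \eqref{innerFbdE} and the bootstrap assumptions give it the rate $(1+\tau_1)^{-2+\a}\ep^2E_0$. (Working instead on the region $\{u_1\leq u\leq u_2,\ v\leq v_1\}$, where $\{v=v_1\}$ really is a future boundary, would let you discard signed fluxes, but then part of the past/lateral boundary lies on the timelike cylinder $r=R$, whose flux is not sign-definite and needs separate control, so that route also requires more than a pure dropping argument.) Your treatment of the $\phi_2^2$ piece is fine and essentially reproduces the paper's identity $(\pa_u\psi_2)^2=r^2(\pa_u\phi_2)^2-\pa_u(r\phi_2^2)$ plus \eqref{pbout1}; note only that in the generality of the lemma the corner $(u_1,v_1)$ lies on some $S_{\tau'}$ with $\tau'\in[\tau_1,\tau_2]$ rather than on $S_{\tau_1}$, which still gives the stated rate since $(1+\tau')^{-2+\a}\leq(1+\tau_1)^{-2+\a}$.
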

\begin{proof} Back to the energy equation ~\eqref{energyeq}, take $X=T$ on the region $D$. We get
\begin{align*}
\int_{u_1}^{u_2}J_\mu^T[\phi_2]n^{\mu}d\si + \int\limits_{\substack{v\geq v_1\\u=u_1 }}J_\mu^T[\phi_2]n^{\mu}d\si&=\int\limits_{\substack{v\geq v_1\\u=u_2 }}J_\mu^T[\phi_2]n^{\mu}d\si+\int_{I_{\tau_1}^{\tau_2}}J_\mu^T[\phi_2]n^{\mu}d\si\\
&\quad+ \int_D \Box_g\phi_2\cdot \pa_t\phi_2d\vol.
\end{align*}
Thus applying Cauchy-Schwartz inequality to the last term, we obtain
\begin{align*}
\int_{u_1}^{u_2}\int_{\om}r^2(\pa_u\phi_2)^2d\om du &\leq 2\int_{u_1}^{u_2}J_\mu^T[\phi_2]n^{\mu}d\si\\
&\les E[\phi_2](\tau_1)+D[\Box_g\phi_2]_{\tau_1}^{\tau_2}+\int_D \frac{(\pa_t\phi_2)^2}{(1+r)^{1+\a}}d\vol \\
&\les(1+\tau_1)^{-2+\a}\ep^2 E_0.
\end{align*}
Then ~\eqref{pbout1} implies that
\begin{align*}
 \int_{u_1}^{u_2}\int_{|\om|=1}(\pa_u\psi_2)^2d\om du &= \int_{u_1}^{u_2}\int_{|\om|=1}r^2(\pa_u\phi_2)^2 + \pa_u(r\phi_2^2)d\om du\\
&=\int_{u_1}^{u_2}\int_{|\om|=1}r^2(\pa_u\phi_2)^2 d\om du + \left.\int_{\om}r\phi_2^2d\om\right|_{u_1}^{u_2}\\
&\les (1+\tau_1)^{-2+\a}\ep^2 E_0 +(1+\tau_2)^{-2+\a}\ep^2 E_0\\
&\les (1+\tau_1)^{-2+\a}\ep^2 E_0.
\end{align*}
\end{proof}

We continue our proof when $k_1\leq 2$. The above lemma shows that
\begin{equation}
\label{pro3sup}
\begin{split}
\int_{\tau_1}^{\tau_2}\int_{S_\tau}r^{-1-\a}|\pa_v\psi_1|^2|\pa_u\psi_2|^2 d\vol
&=\int_{v_{\tau_1}}^{\infty}\int_{u_{\tau_1}}^{u(v)}|\pa_u\psi_2|^2r^{1-\a}|\pa_v\psi_1|^2d ud\om dv\\
 &\leq \int_{v_{\tau_1}}^{\infty}\int_{u_{\tau_1}}^{u(v)}|\pa_u\psi_2|^2 d\om du\cdot \sup\limits_{u, \om}r^{1-\a}|\pa_v\psi_1|^2 dv\\
& \les (1+\tau_1)^{-2+\a}\ep^2
E_0\int_{v_{\tau_1}}^{\infty}\sup\limits_{u,
\om}r^{1-\a}|\pa_v\psi_1|^2 dv.
\end{split}
\end{equation}
We use Sobolev embedding to estimate $\sup\limits_{u,
\om}r^{1-\a}|\pa_v\psi_1|^2$. First, on the unit sphere, we have
\begin{align}
\label{supremev}
 |\pa_v\psi_1|^2 \les\sum\limits_{a\leq 2}\int_{\om}|\Om^a\pa_v\psi_1|^2d\om&\les\sum
 \limits_{a\leq
 2}\int_{\om}|\pa_v\Om^a\psi_1|^2\om=\int_{\om}|\pa_v\psi_{1'}|^2d\om,
\end{align}
where $k_{1'}\leq k_1 +2$ and we omit the summation sign. Then on
$[u_1, u_2]$, we have
\begin{align*}
r^{1-\a}(\pa_v\psi_{1'})^2&\les\left.r^{1-\a}(\pa_v\psi_{1'})^2\right|_{u=u_1} + \int_{u_1}^{u_2}r^{1-\a}(\pa_v\psi_{1'})^2du \\
&\quad\quad+ \int_{u_1}^{u_2}r^{1-\a}(\pa_u\pa_v\psi_{1'})^2du + \int_{u_1}^{u_2}r^{-\a}(\pa_v\psi_{1'})^2du\\
&\les \left.r^{1-\a}(\pa_v\psi_{1'})^2\right|_{u=u_1} + \int_{u_1}^{u_2}r^{1-\a}(\pa_v\psi_{1'})^2du\\
& \quad\quad+ \int_{u_1}^{u_2}r^{1-\a}(\lap\psi_{1'})^2+
r^{3-\a}|F_{1'}|^2du,
\end{align*}
where we use the wave equation ~\eqref{waveqpsi} and
$u_1=u_\tau=\frac{\tau-R}{2}, u_2=u(v)$. Integrate over the unit
sphere. We can show that
\begin{align*}
 &\int_{v_{\tau_1}}^{\infty}\int_{\om}r^{1-\a}|\pa_v\psi_{1'}|^2 d\om dv \les \int_{S_{\tau_{1}}}r^{1-\a}(\pa_v\psi_{1'})^2dvd\om\\
& +\int_{\tau_{1}}^{\tau_2}\int_{S_\tau}r^{1-\a}(\pa_v\psi_{1'})^2 +
(\nabb\Om\phi_{1'})^2r^{1-\a} +r ^{3-\a}|F_{1'}|^2 dvd\om d\tau,
\end{align*}
where $\nabb=\frac{\Om}{r}$. We claim that the above inequality can
be bounded by a multiple of $\ep^2 E_0$. In fact, the first term can
be bounded by $(1+\tau_1)^{-1+\a}\ep^2 E_0$ by ~\eqref{stauenergy1};
the second term can be bounded by $\ep^2 E_0$  by ~\eqref{pro:2};
the third term can be controlled by $(1+\tau_1)^{-2+\a}\ep^2 E_0$ by
the integrated energy inequality ~\eqref{ILE0}(notice that
$k_{1'}\leq k_1+2\leq 4$); The last term is good by our bootstrap
assumptions and Proposition ~\ref{VectorDecay}. Summarizing, we have
shown
$$\int_{v_{\tau_1}}^{\infty}\sup\limits_{u, \om}r^{1-\a}|\pa_v\psi_1|^2 dv\les \ep^2
E_0.
$$
Plug this into ~\eqref{pro3sup}. We get
$$\int_{\tau_1}^{\tau_2}r^{1-\a}\int_{S_\tau}|\pa_v\psi_1|^2|\pa_u\psi_2|^
2 dvd\om d\tau\les (1+\tau_1)^{-2+\a}\ep^4 E_0^2.$$

\bigskip

\textbf{When} $k_2\leq k_1$, that is $k_2\leq 2$. For this case, we
control $|\pa_v\psi_1|^2$ by the p-weighted energy inequality and
bound $\pa_u\psi_2$ uniformly. First using Sobolev embedding on the
unit sphere, we have
\begin{align}
\label{supremeu}
 |\pa_u\psi_2|^2 \les\sum\limits_{a\leq 2}\int_{\om}|\Om^a\pa_u\
 \psi_2|^2d\om&\les\sum\limits_{a\leq
 2}\int_{\om}|\pa_u\Om^a\psi_1|^2d\om=\int_{\om}|\pa_u\psi_{2'}|^2d\om,
\end{align}
where $k_{2'}\leq k_2 +2$ and we omit the summation sign. Therefore,
by ~\eqref{pro:2}, we obtain
\begin{equation}
 \label{pro3sup1}
\begin{split}
 &\int_{\tau_1}^{\tau_2}\int_{S_\tau}r^{1-\a}|\pa_v\psi_1|^2|\pa_u\psi_2|^2 dvd\om d\tau\\
&\les \int_{\tau_1}^{\tau_2}\int_{S_\tau}r^{1-\a}|\pa_v\psi_1|^2 \cdot\int_{\om}|\pa_u\psi_{2'}|^2d\om \quad dvd\om d\tau\\
&\les (1+\tau_1)^{-1+\a}\ep^2 E_0\int_{\tau_1}^{\tau_2}
\sup\limits_{v}r^{-\a}\int_{\om}|\pa_u\psi_{2'}|^2d\om d\tau.
\end{split}
\end{equation}
Then for all $v\in[v_\tau,\infty)$, we can show that
\begin{align*}
 r^{-\a}(\pa_u\psi_{2'})^2&\les \left.r^{-\a}(\pa_u\psi_{2'})^2\right|_{v=v_{\tau_2}}+\left|\int_{v}^{v_{\tau_2}}r^{-1-\a}|\pa_u\psi_{2'}|^2dv\right|\\
&\quad\quad +2\left|\int_{v}^{v_{\tau_2}}r^{-\a}|\pa_u\psi_{2'}\cdot\pa_v\pa_u\psi_{2'}|dv\right|\\
&\les\left.r^{-\a}(\pa_u\psi_{2'})^2\right|_{v=v_{\tau_2}}+\int_{v_\tau}^{\infty}r^{-1-\a}|\pa_u\psi_{2'}|^2dv\\
&\quad\quad + \int_{v_\tau}^{\infty}r^{-1-\a}(\pa_u\psi_{2'})^2dv + \int_{v_\tau}^{\infty}r^{1-\a}(\pa_v\pa_u\psi_{2'})^2dv\\
&\les\left.r^{-\a}(\pa_u\psi_{2'})^2\right|_{v=v_{\tau_2}}+ \int_{v_\tau}^{\infty}\frac{(\pa_u\psi_{2'})^2}{r^{1+\a}}dv\\
&\quad\quad +\int_{v_\tau}^{\infty}r^{1-\a}(\lap\psi_{2'})^2dv+
\int_{v_\tau}^{\infty}r^{3-\a}|F_{2'}|^2dv.
\end{align*}
Integrating over the unit sphere, we conclude that
\begin{align}
\notag
&\int_{\tau_1}^{\tau_2}  \sup\limits_{v}r^{-\a}\int_{\om}|\pa_u\psi_{2'}
|^2d\om d\tau\les \int_{\tau_1}^{\tau_2}\int_{\om}\left.r^{-\a}(\pa_u\psi_{2'})^2\right|_{v=v_{\tau_2}}d\tau \\
\label{dupsi}
&\quad\quad+\int_{\tau_1}^{\tau_2}\int_{S_\tau}\frac{(\pa_u\psi_{2'})^2}{r^{1+\a}}+r^{1-\a}(\nabb\Om\phi_{2'})^2
+r^{3-\a}|F_{2'}|^2dvd\om d\tau.
\end{align}
Reparametrize the first term. We get
$$\int_{\tau_1}^{\tau_2}\int_{\om}\left.r^{-\a}(\pa_u\psi_{2'})^2\right|_{v=v_{\tau_2}}d\tau=\int_{u_{\tau_1}
}^{u_{\tau_2}}\int_{\om}r^{-\a}|\pa_u\psi_{2'}|^2d\om
du\leq\int_{u_{\tau_1}}^{u_{\tau_2}}\int_{\om}|\pa_u
\psi_{2'}|^2d\om du,
$$
which can be bounded by $(1+\tau_1)^{-2+\a}\ep^2 E_0$ by Lemma
~\ref{crossnullen}. The second and third term in ~\eqref{dupsi} can
also be bounded by $(1+\tau_1)^{-2+\a}\ep^2 E_0$ because of
~\eqref{ILE0}(notice that $k_{2'}\leq k_2+2\leq 4$). The last term
in \eqref{dupsi} is good by our assumptions and Proposition
~\eqref{VectorDecay}. Therefore
\[
 \int_{\tau_1}^{\tau_2}\int_{S_\tau}r^{1-\a}|\pa_v\psi_1|^2|\pa_u\psi_2|^2
 dvd\om d\tau\les (1+\tau_1)^{-1+\a}\ep^2 E_0\cdot(1+\tau_1)^{-2+\a}\ep^2 E_0\les(1+\tau_1)^{-2+\a}\ep^4
 E_0^2.
\]
This concludes the proof of ~\eqref{nullbdout} and hence Proposition
~\ref{mainprop} follows.

\section{Proof of the Main Theorem }
We used the foliation $\Si_\tau$, part of which is null, in the
previous argument. However, we do not have a local existence result
with respect to the foliation $\Si_\tau$. We thus use the standard
Picard iteration process. Take $\phi_{-1}(t, x)=0$. We solve the
following linear wave equation recursively
\begin{equation}
\label{iteration}
\begin{cases}
 \Box_{g(t, x)}\phi_{n+1}=F(\phi_n, \pa\phi_n), \\
 \phi_{n+1}(0,x)=\ep \phi_0(x), \pa_t\phi_{n+1}(0,x)=\ep \phi_1(x).
\end{cases}
\end{equation}
Now suppose the implicit constant in Proposition ~\ref{mainprop} is
$C_1$, which, according to our notation, depends only on $R$, $\a$,
$\lambda$, $h$ and $C_0$. Set
$$\ep_0=\frac{1}{\sqrt{C_1 E_0}}.
$$
Then for all $\ep\leq \ep_0$, we have
\[
 C_1\ep^4 E_0^2\leq \ep^2 E_0.
\]
Thus by the continuity of $F(\phi_n, \pa\phi_n)$, we in fact have
shown that the nonlinear term $F$ satisfies
\begin{align*}
 &\sum\limits_{(k, j)\in A}\int_{r\leq R}|\Om^k T^j F(\phi_n, \pa\phi_n)|^2 dx\leq C_1 E_0^2\ep^4 (1+\tau)^{-3+
 \a}\leq E_0\ep^2 (1+\tau)^{-3+\a}, \quad\forall n,\\
 &\sum\limits_{(k, j)\in A}\int_{\tau_1}^{\tau_2}\int_{S_\tau}|\Om^k T^j F(\phi_n, \pa\phi_n)|^2r^{3-\a}d\vol\leq E_0\ep^2
 (1+\tau)^{-2+\a},\quad\forall n.
\end{align*}
Then Corollary ~\ref{VenergydecayCor} implies that
\[
 E[\Om^k T^j\phi_{n}](\tau)\les E_0 \ep^2 (1+\tau)^{-2+\a},\quad\forall (k, j)\in
 A.
\]
We remark here that all the implicit constants are independent of
$n$.

Then Proposition ~\ref{energydecay}, together with the Sobolev
embedding on the unit sphere, implies that
\begin{align*}
 &\sum\limits_{k\leq 2, j\leq 2}|\Om^k T^j\phi_n|\les \sqrt{E_0}\ep (1+r)^{-\f12}(1+|t-r+R|)^{-1+\frac{1}{2}\a},\\
&|\phi_n|\les_\delta\sqrt{E_0}\ep
(1+r)^{-1}(1+|t-r+R|)^{-\f12+\frac{1}{2}\delta},\quad \forall
\delta>\a.
\end{align*}
We still need to show that the solution is $C^2$. The first step is to show that $\phi_n$ is uniformly bounded in $C^1$.
When $r\leq \f12 R$,
the inequality \eqref{Tptwisebd} implies that
\begin{equation*}
 |\pa\Om^k T^j\phi_n|\les \sqrt{E_0}\ep
 (1+r)^{-\f12}(1+\tau)^{-1+\frac{1}{2}\a},\quad \forall k\leq 1, j\leq 2.
\end{equation*}
When $\f12 R\leq r\leq R$, we use \eqref{SemrR} and Sobolev embedding on the unit sphere and we can obtain the same estimates.
When $r\geq R$, noticing that $\nabb=\frac{\Om}{r}$, it suffices to consider $\pa_r \phi_n$. First the inequality \eqref{dupsi} shows
that
$$\int_{\tau_1}^{\tau_2}  \sup\limits_{v}r^{-\a}\int_{\om}|\pa_u\psi|^2+|\pa_t\pa_u\psi|^2d\om d\tau
\les(1+\tau_1)^{-2+\a}\ep^2E_0,
$$
where $\psi=r\Om^{k}T^{j}\phi_n$, $k\leq 3, j\leq 2$. Using Sobolev
embedding on $S^2\times[\tau_1, \tau_2]$, we obtain
$$|r\pa_u\Om^k T^j\phi_n|^2\les \phi^2+
r^{\a}(1+\tau)^{-2+\a}\ep^2E_0,\quad\forall k\leq 1, j\leq 2.
$$
Since $\pa_u=\pa_t-\pa_r$ and $|\Om^k T^j\phi_n|^2, |\pa_t\Om^k
T^j\phi_n|^2\les (1+r)^{-1}(1+\tau)^{-2+\a}\ep^2E_0$ and $\a<1$, we
obtain
$$|\pa_r\Om^k T^j\phi_n|\les (1+r)^{-\f12}(1+\tau)^{-1+\f12\a}\sqrt{E_0}\ep,\quad \forall k\leq 1, j\leq 2.$$
In particular, we have shown that 
\[
 |\pa \Om^k T^j \phi_n|\les (1+r)^{-\f12}(1+\tau)^{-1+\f12\a}\sqrt{E_0}\ep,\quad \forall k\leq 1, j\leq 2.
\]
To show that $\phi_n$ is also bounded in $C^2$. Outside the cylinder $\{r\leq \frac{1}{4}R\}$, we use the equation. 
Back to the equation ~\eqref{iteration}, we
can represent $\pa_{rr}\phi_n$ as follows
$$g^{rr}\pa_{rr}\phi_n=F(\phi_{n-1},\pa\phi_{n-1})-g^{\a\b}\pa_{\a\b}\phi_n+g^{rr}\pa_{rr}\phi_
n-\frac{1}{\sqrt{-G}}\pa_{\a}\left(g^{\a\b}\sqrt{-G}\right)\pa_\b\phi_n.
$$
Since we have shown that
\begin{align*}
 &|\pa\phi_n|, |\Om^2\phi_n|, |\pa_{tt}\phi_n|, |\phi_n|, |\pa \Om\phi_n|, |\pa T\phi_n|\les
 (1+r)^{-\f12}(1+\tau_1)^{-1+\f12\a}\sqrt{E_0}\ep, \quad\forall n,
\end{align*}
we thus can estimate
\[
 |\pa^2\phi_n|\les \sum\limits_{k, j\leq 1}|\pa \Om^k T^j\phi_n|+|\pa_{rr}\phi_n|\les\sqrt{E_0}\ep
 (1+r)^{-\f12}(1+\tau)^{-1+\f12\a}.
\]
Inside the cylinder $\{r\leq \frac{1}{4}R\}$, we rely on elliptic theory. First, we have the elliptic equation
 for $\phi_{n+1}$
\[
 g^{ij}\pa_{ij}\phi_{n+1}=F(\phi_n, \pa \phi_n)-\frac{1}{\sqrt{-G}}\pa_{\a}\left(g^{\a\b}\sqrt{-G}\right)\pa_\b\phi_{n+1}-g^{00}
T^2\phi_{n+1}-2g^{0i}\pa_i T\phi_{n+1}.
\]
As we have shown from estimates \eqref{Caest} that
\begin{equation*}
 \|\pa\Om^k T^j\phi_n\|_{C^{\f12}(B_{\f12 R})}\leq \|\nabla\Om^k T^j\phi_n\|_{C^{\f12}(B_{\f12 R})}+
\|\Om^k T^{j+1}\phi_n\|_{C^0}\les \sqrt{E_0}\ep
 (1+r)^{-\f12}(1+\tau)^{-1+\frac{1}{2}\a}
\end{equation*}
for all $k\leq 1, j\leq 2$,
we conclude that the right hand side of the above elliptic equation is uniformly bounded in $C^{\f12}(B_{\f12 R})$. Hence
Schauder estimates \cite{elliptic} imply that
\[
 \|\phi_{n+1}\|_{C^{2, \f12}(B_{\frac{1}{4}R})}\les \sqrt{E_0}\ep
 (1+r)^{-\f12}(1+\tau)^{-1+\frac{1}{2}\a}.
\]
In particular, the above argument shows that
\begin{equation}
\label{ptdcphin}
 \sum\limits_{|\b|\leq 2}|\pa^{\b}\phi_n|\les \sqrt{E_0}\ep
 (1+r)^{-\f12}(1+\tau)^{-1+\f12\a}.
\end{equation}
Now the classical local theory shows that there exists a time $t^*>0$
and a unique smooth solution $\phi(t, x)\in C^\infty([0, t^*)\times
\mathbb{R}^{3})$ of the equation ~\eqref{THEWAVEEQ}. Moreover, the
proof of the local theory indicates that
\[
\phi_{n}(t, x)\rightarrow \phi(t, x)
\]
in $C^{\infty}([0, t^*)\times \mathbb{R}^3)$ since the proof of the local
existence result relies on the Picard iteration. Therefore by
~\eqref{ptdcphin}, we have pointwise bound for the solution $\phi$
\[
\sum\limits_{|\b|\leq 2}|\pa^{\b}\phi|\les \sqrt{E_0}\ep
(1+r)^{-\f12}(1+\tau)^{-1+\f12\a},\quad\forall (t, x)\in [0,
T)\times \mathbb{R}^3.
\]
By a theorem of H$\ddot{o}$rmander ~\cite{hormander} that as long as
the solution is bounded up to the second order derivatives, the
solution exists globally. That is there exists a unique global
solution $\phi(t,x)\in C^\infty(\mathbb{R}^{3+1})$ which solves
~\eqref{THEWAVEEQ}. Moreover
\[
\phi_{n}(t, x)\rightarrow \phi(t, x),\quad (t, x)\in
\mathbb{R}^{3+1}.
\]
Therefore $\phi$ admits all the estimates of $\phi_n$ obtained above
.

\section{Acknowledgments}
The author is greatly indebted to his advisor Igor Rodnianski for suggesting and leading to the problem. He thanks Igor Rodnianski for sharing numerous valuable thoughts and insights. He also thanks Jonathan Luk and Pin Yu for helpful discussions and comments on the manuscript.

\bibliography{shiwu}{}
\bibliographystyle{plain}

\bigskip

Department of Mathematics, Princeton University, NJ 08544 USA

\textsl{Email}: shiwuy@math.princeton.edu

\end{document}